\documentclass[12pt]{article}
\usepackage[english]{babel}
\usepackage{epsfig}
\usepackage{amsfonts}
\usepackage{amssymb}
\usepackage{amsmath}
\usepackage{amsthm}
\usepackage{latexsym}
\usepackage{graphicx}
\usepackage{bm}
\usepackage{tabularx}
\usepackage{booktabs} 
\usepackage{enumerate}
\usepackage{caption}
\usepackage{wrapfig}

\usepackage[titletoc]{appendix}

\usepackage[numbers]{natbib}

\usepackage{bbm} 
\usepackage{url}
\usepackage{subfig}
\usepackage{hyperref}
\usepackage{color}
\usepackage{float}


\setlength{\oddsidemargin}{0in} \setlength{\topmargin}{-.5in}
\setlength{\textheight}{9in} \setlength{\textwidth}{6.5in} 

\catcode`\@=11

\newcommand{\longdot}[1]{\raisebox{-1.4pt}{$\stackrel{\bullet}{#1}$}}
\newcommand{\shortdot}[1]{\raisebox{-0.4pt}{$\stackrel{\bullet}{#1}$}}



\theoremstyle{plain}
\newtheorem{theorem}{Theorem}[section]

\newtheorem{proposition}[theorem]{Proposition}

\theoremstyle{definition}

\theoremstyle{remark}


\begin{document}
\title{Limiting the Oscillations in Queues with Delayed Information Through a Novel Type of Delay Announcement}
\author{ 
  Sophia Novitzky \\ Center for Applied Mathematics \\ Cornell University
\\ 657 Rhodes Hall, Ithaca, NY 14853 \\  sn574@cornell.edu  \\ 
 \and 
  Jamol Pender \\ School of Operations Research and Information Engineering \\ Cornell University
\\ 228 Rhodes Hall, Ithaca, NY 14853 \\  jjp274@cornell.edu  \\ 
 \and  
Richard H. Rand \\ Sibley School of Mechanical and Aerospace Engineering \\ Department of Mathematics \\ Cornell University
\\ 535 Malott Hall, Ithaca, NY 14853 \\  rand@math.cornell.edu  \\ 
 \and  
Elizabeth Wesson \\ Center for Applied Mathematics \\ Cornell University
\\Rhodes Hall 657, Ithaca, NY 14853 \\  enw27@cornell.edu  \\ 
 }

\maketitle
\begin{abstract}
Many service systems use technology to notify customers about their expected waiting times or queue lengths via delay announcements.  However, in many cases, either the information might be delayed or customers might require time to travel to the queue of their choice, thus causing a lag in information. In this paper, we construct a neutral delay differential equation (NDDE) model for the queue length process and explore the use of \emph{velocity} information in our delay announcement.  Our results illustrate that using velocity information can have either a beneficial or detrimental impact on the system. Thus, it is important to understand how much velocity information a manager should use. In some parameter settings, we show that velocity information can eliminate oscillations created by delays in information. We derive a fixed point equation for determining the optimal amount of velocity information that should be used and find closed form upper and lower bounds on its value. When the oscillations cannot be eliminated altogether, we identify the amount of velocity information that minimizes the amplitude of the oscillations. However, we also find that using too much velocity information can create oscillations in the queue lengths that would otherwise be stable.\\

Keywords:  neutral delay-differential equation, Hopf bifurcation, perturbations method, operations research, queueing theory, fluid limits, delay announcement, velocity \\

AMS subject classifications: 34K40, 34K18, 41A10, 37G15, 34K27 

\end{abstract}

\section{Introduction}
Many corporations and services eagerly adopt new technologies that allow the service managers to interact with their customers. One highly important aspect of the communication is the delay announcement, which informs the customers of their estimated waiting time or queue length. Delay announcements are used in variety of service systems. For example, some hospital emergency rooms display their expected waiting times online. Telephone call centers warn the customers that are placed on hold about extensive waiting times. Amusement parks update the waiting times for different rides, and some transportation networks warn about heavy traffic or delays via road signs. 

The popularity of delay announcements among service managers extends beyond customer satisfaction. Informing customers about their waiting times allows managers to influence customer decisions, and the overall system dynamics, which are crucial to a company's productivity and underlying revenue. As a result, many important questions arise when a service decides to implement a delay announcement for its customers. What type of information, if any, should the service provider give to customers? Are there circumstances when the delay announcement hurts the service provider?  How long does it take for the service provider to calculate the delay announcement and disseminate it to customers? 

The existing literature explores different ways to give a delay announcement, as well as different response behaviors of the customers.  For example, Ibrahim et al.  \cite{ibrahim2015does} study a service system with applications to telephone call centers. Upon calling and receiving the delay announcement, customers have the option to join the queue, balk (leave immediately), or abandon the queue after spending some time waiting.  The authors develop methods for determining the accuracy of the last-to-enter-service (LES) delay announcement, which estimates the waiting time for an incoming customer as the waiting time of the most recent customer who entered service.  Under the same options for customer behavior, Jouini et al. \cite{jouini2011call} consider providing different percentiles of the waiting time distribution as information to their customers. They determine the amount of information that maximizes the number of customers who end up receiving service. Armony and Maglaras \cite{armony2004customer} model a queue where upon arrival the customers are told the steady state expected waiting time, and in addition are given an option to request a call back.  The authors propose a staffing rule that picks the minimum number of service agents that satisfies a set of operational constraints on the performance of the system. Guo and Zipkin \cite{guo2007analysis, guo2009impacts} allow customers to either join the queue or balk, when the customers are presented either with no information, partial information, or full information about the queues are disclosed. The authors discuss for what situation the extra information is beneficial, and when the addition information can hurt the customers or the service provider.

This paper also explores the impact of the delay announcement on the dynamics of the queueing process.  However, the current literature focuses only on services that give the delay announcements to their customers in real-time, while we consider the scenarios when the information itself is delayed. Lags in information are common in services that inform their customers about the waiting times prior to customers' arrival to the service. Such services are prominent in the context of hospital emergency rooms, highway transportation, amusement park rides, and internet buffer sizing \cite{dong2018, samantha2018, buffer1, transport2009}. One specific example is the Citibike bike-sharing network in New York City \cite{freund2016bikes,tao2017bikes}. Riders can search the availability of bikes on a smartphone app, as shown on Figure \eqref{Fig: bike}.  However, in the time that it takes for the riders to leave their home and get to a station, all of the bikes could have been taken from that station.  Thus, the information they used is delayed and is somewhat unreliable by the time they arrive to the station. 
\begin{figure}[H]
    \centering
    \includegraphics{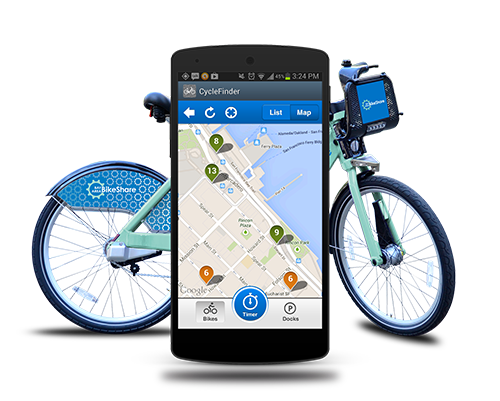}
    \caption{Bike sharing network app.}
    \label{Fig: bike}
\end{figure}

In this paper, we present a deterministic fluid-like model of queues. This may seem as a counter-intuitive choice given that queues usually comprise of discrete units such as the number of people, jobs, or automobiles. However, a queueing system with a heavy traffic flow can be well approximated by a fluid model. These approximations are common in queueing literature \cite{ibrahim2015does,armony2009impact}, where the queues are modeled as stochastic processes and then shown to converge in some limiting regime to deterministic equations. Fluid models are especially useful in settings like manufacturing systems, traffic networks, cloud-computing jobs, busy call centers, and crowds at Disneyland park where the demand for service is large \cite{perkins1995, helbing1995, armony2009impact,armbruster2004, samantha2018}. 

\subsection{Contributions of Paper} 
We present a fluid model of $N$ queues where customers choose which queue to join, giving preference to the shorter queue based on delayed information. Similar models were previously considered by the authors in \cite{dchoice2016,pender2017asymptotic,novitzky2018}. The size of delay in information determines whether the queues approach a stable equilibrium or Hopf bifurcations occur and the queues oscillate indefinitely. The threshold at which the queues become unstable can be affected by the type of information that is revealed to customers. This paper analyzes what kind of information the service managers should provide to customers in order to distribute the workload evenly among the queues. This benefits both the customers who will avoid excessive waits at the longer queues and the servers who will avoid getting overworked or underworked. In many settings, the operator knows not only the current queue lengths, but also the rate at which the queues are changing, namely the queue velocity. 
\begin{itemize}
    \item We develop a new queueing model, where customers are told a weighted sum of the queue length and the queue velocity. 
    \item We show that the queueing system can undergo a Hopf bifurcation if the delay due to the customer travel time is large. We derive the exact point where the Hopf bifurcation occurs.
    \item  We specify how the weight coefficient of velocity information should be chosen so that queues can maintain their stability under greater lags in time. Specifically, we prove that there exists an optimal weight that maximizes the delay where bifurcation occurs. 
    \item We derive a fixed point equation for the optimal weight, as well as closed-form expressions for upper and lower bounds on that weight. We also provide upper and lower bounds on the maximum delay where the bifurcation occurs.
    \item When the oscillations in queues cannot be prevented, we use the second-order approximation of amplitude via Lindstedt's method to determine the weight of velocity information that minimizes the amplitude of oscillations. 
    \item  When the weights are chosen inadequately, the velocity information can be harmful to the system. We specify the threshold for the weight coefficient where the adverse effects take place.

\end{itemize}

\subsection{Organization of Paper}  The remainder of the paper is organized as follows. Section \eqref{Sec: model} presents a mathematical model for $N$ queues and describes the qualitative behavior of the queueing system.  In particular, we prove the existence and uniqueness of the equilibrium and give conditions under which the equilibrium is locally stable. We show that for certain values of parameters, infinitely many Hopf bifurcations may occur.

For some parameters, the queues converge to an equilibrium for sufficiently small delay in information, but as the delay exceeds a certain threshold $\Delta_{cr}$, the equilibrium becomes unstable. Section \eqref{Sec: comparison} discusses how the velocity information affects $\Delta_{cr}$, and since the queues are stable only when the delay is less than $\Delta_{cr}$, it becomes our objective to maximize the threshold delay to provide. 

Section \eqref{Sec: N=2} considers a queueing system with two queues, which is a special case of our $N$-queue model. We prove that all Hopf bifurcations are supercritical. We use a perturbations technique to develop a highly accurate approximation of the amplitude near the bifurcation point, and show that the amplitude of oscillations in queues can be decreased with the right choice of the velocity information weight parameter.

\section{The Queueing Model} \label{Sec: model}
Customers arrive at a rate $\lambda>0$ to a system of $N$ queues, where they are given information about the waiting times at each queue based on the current queue length and the rate at which the queue is changing. Each customer chooses one of $N$ queues to join, giving probabilistic preference to the shorter queue. Then, customers travel for $\Delta>0$ time units to reach the queue of their choice. The model assumes infinite-server queues with service rate $\mu>0$, which is customary in the operations research literature \cite{fralix2009,iglehart1965,resnick1999}. This assumption implies that the departure rate for a queue is the service rate $\mu$ multiplied by the total number of customers in that queue. Figure \eqref{Fig:QueueFlow} shows the queueing system for $N$ queues. The queue length for each queue $i$ is given by 
\begin{eqnarray}
\label{q_i dot with p}
\shortdot{q}_i(t) =  \lambda  p_i(q_1, \dots, q_N, \shortdot{q}_1, \dots,  \shortdot{q}_N, \Delta)  - \mu q_i(t), \quad \forall i \in \{1,...,N\},
\end{eqnarray}
where the function $p_i$ represents the probability that a customer chooses the $i^{th}$ queue.
\begin{figure}[H]
	\centering 
		\includegraphics[scale=.8]{./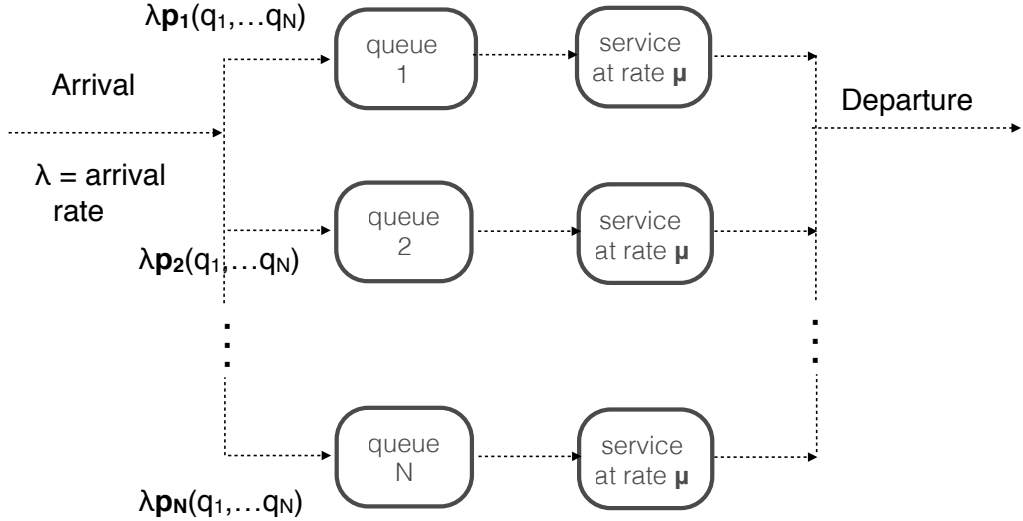}
\caption{Customers going through a N-queue service system.} \label{Fig:QueueFlow}
\end{figure}

Customers are told a weighted sum of the queue length and the queue velocity,
\begin{eqnarray}
\text{information about i}^{th}\text{ queue} = q_i(t- \Delta) + \delta \shortdot{q}_i(t - \Delta).
\end{eqnarray}
This information helps customers decide which queue to join. The probability of a customer choosing the $i$-th queue is given by the Multinomial Logit Model (MNL), commonly used to model customer choice in fields of operations research, economics, and applied psychology \cite{so1995,hausman1984,mcfadden1977,train2009}
\begin{eqnarray}
p_i(q_1, \dots, q_N, \shortdot{q}_1, \dots,  \shortdot{q}_N, \Delta) = \frac{\exp\Big(-\theta\big(q_i(t - \Delta)+ \delta \shortdot{q}_i(t - \Delta)\big)\Big)}{\sum_{j = 1}^N \exp\Big(-\theta\big(q_j(t - \Delta) + \delta \shortdot{q}_j(t - \Delta)\big)\Big)}, \label{p_i}
\end{eqnarray}
where $\theta > 0$ is a standard coefficient of the MNL,  $\delta \geq 0$ is the weight of the information about queue's velocity, and $\Delta >0$ is the delay in time due to customers travelling to the service. 

The parameter $\theta$ determines how strong the customer preference is for the shortest queue. For intuition, we will illustrate the MNL model on the simplest model where there are two queues, and the parameters $\delta, \Delta$ are set to $0$. Figure \eqref{Fig: MNL surf} shows the probability of a customer joining the 1st queue, as a function of $\theta$ and the difference in queue lengths $q_2 - q_1$, 
\begin{eqnarray}
p_1(q_1,q_2) = \frac{\exp(-\theta q_1)}{\exp(-\theta q_1) +  \exp(-\theta q_2)} = \frac{1}{1 +  \exp(-\theta \big(q_2- q_1)\big)} .
\end{eqnarray}
When $\theta \to 0$, customers choose queues arbitrarily giving no preference based on the queue length. Figure \eqref{Fig: MNL surf} indicates the $\theta = 0$ by the yellow line, and $p_1 = p_2 = 0.5$ for any difference in queue lengths. When $\theta \to \infty$, customers always choose the shortest queue, even when the difference in lengths is marginal. This is marked by the black line in Figure \eqref{Fig: MNL surf}. However, for simplicity one can set $\theta = 1$, which is denoted by the red curve. In this case, when the queues are roughly of equal length, they will be joined with roughly the same probabilities, but ones the difference in the queue lengths increases, the shorter queue will become more preferable. 

\begin{figure}[H]
	\centering 
		\includegraphics[scale=.4]{./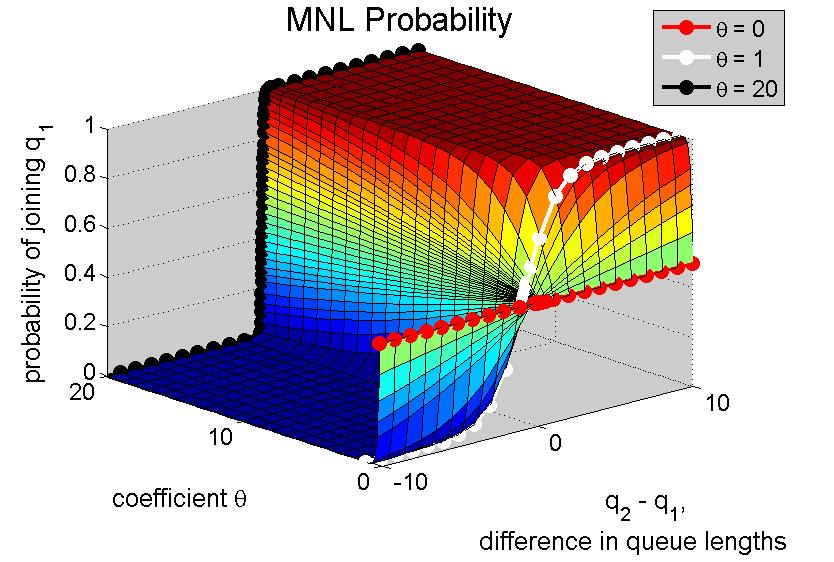}
\caption{MNL for two queues.} \label{Fig: MNL surf}
\end{figure} 

\paragraph{Complete model.} The incorporation of the probabilities $p_i$ into the queueing system provides a system of neutral delay differential equations (NDDE's) for the queue lengths
\begin{eqnarray}
\label{q_i dot}
\shortdot{q}_i(t) =  \lambda \cdot \frac{\exp\Big(-\theta\big(q_i(t - \Delta)+ \delta \shortdot{q}_i(t - \Delta)\big)\Big)}{\sum_{j = 1}^N \exp\Big(-\theta\big(q_j(t - \Delta) + \delta \shortdot{q}_i(t - \Delta)\big)\Big)}  - \mu q_i(t), \quad \forall i \in \{1,...,N\}
\end{eqnarray}
with the initial conditions specified by nonnegative continuous functions $f_i$ 
\begin{eqnarray} \label{initial conditions}
q_i(t) = f_i(t), \quad \shortdot{q}_i(t) = \shortdot{f}_i(t), \quad t \in [-\Delta, 0] .
\end{eqnarray}


\subsection{Conditions for Stability and Hopf Bifurcations}

In this Section, we describe the behavior of the queues from Equation \eqref{q_i dot}. We begin by establishing the existence and uniqueness of solution to the initial value problem \eqref{q_i dot} - \eqref{initial conditions}. We note that there exists an extensive analysis of functional differential equations, see for example \cite{hale1993, Bellena2009, lipshutz2015}. The existence and uniqueness of solution for our specific model directly follows from Driver \cite{Driver1965}, as stated in the result below.
\begin{theorem}
Let $f_i(t)$ from Equation \eqref{initial conditions} be absolutely continuous on $t \in [-\Delta,0]$, and $\shortdot{f}_i(t)$ be bounded for almost all $t \in [-\Delta,0]$ for every $1 \leq i \leq N$. Then there exists a solution $q_i,\dots, q_N$ for all $t>0$ that satisfies Equations \eqref{q_i dot} - \eqref{initial conditions}. Further, the solution is unique.
\end{theorem}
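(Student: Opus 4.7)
The plan is to invoke the classical \emph{method of steps}, which is especially clean here because the neutral term $\shortdot q_i(t-\Delta)$ appears only in a retarded argument. That is, at any time $t$ the right-hand side of \eqref{q_i dot} depends on $\shortdot q_j$ only at the shifted time $t-\Delta$, never at $t$ itself. Consequently, on the first window $t\in[0,\Delta]$ the quantities $q_j(t-\Delta)=f_j(t-\Delta)$ and $\shortdot q_j(t-\Delta)=\shortdot f_j(t-\Delta)$ are prescribed by the initial data, and \eqref{q_i dot} collapses to the ordinary Carathéodory system
\begin{equation*}
\shortdot q_i(t) \;=\; G_i(t)\;-\;\mu\,q_i(t), \qquad i=1,\dots,N,
\end{equation*}
where $G_i(t):=\lambda p_i\bigl(f_1(t-\Delta),\dots,f_N(t-\Delta),\shortdot f_1(t-\Delta),\dots,\shortdot f_N(t-\Delta),\Delta\bigr)$. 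Since the MNL expression \eqref{p_i} takes values in $(0,1)$ and its composition with measurable functions is measurable, and since $\shortdot f_i$ is bounded a.e., the forcing $G_i$ is measurable and uniformly bounded by $\lambda$ on $[0,\Delta]$.

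Next, I would solve this linear ODE explicitly by variation of parameters,
\begin{equation*}
q_i(t) \;=\; f_i(0)\,e^{-\mu t} \;+\; \int_0^{t} e^{-\mu(t-s)}\,G_i(s)\,ds,\qquad t\in[0,\Delta],
\end{equation*}
so that $q_i$ is automatically absolutely continuous on $[0,\Delta]$ and $\shortdot q_i(t)=G_i(t)-\mu q_i(t)$ is bounded a.e.\ on $[0,\Delta]$. Uniqueness on this window follows from the Grönwall / contraction argument against the linear term $-\mu q_i(t)$, or equivalently from the fact that the explicit formula above is the only fixed point. This step simultaneously verifies the hypotheses of Driver's theorem in \cite{Driver1965}: the delayed arguments feed in only through a measurable bounded coefficient, and the non-delayed part is Lipschitz in $q_i$.

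Then I would iterate. Because the new datum $\{q_i,\shortdot q_i\}$ on $[0,\Delta]$ satisfies exactly the same regularity hypotheses as the original initial function on $[-\Delta,0]$ (absolute continuity of $q_i$, essential boundedness of $\shortdot q_i$), the identical argument applied on $[\Delta,2\Delta]$ produces a unique extension of the same regularity class. An induction on $k$ yields a unique absolutely continuous solution on every window $[k\Delta,(k+1)\Delta]$, and concatenation gives a unique solution on $[0,\infty)$.

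The only subtle point — and the step I would flag as the principal technical obstacle — is the propagation of regularity through the neutral coupling: one must check that the derivative $\shortdot q_i$ on window $k$ is essentially bounded so that window $k+1$ can even be posed as a Carathéodory ODE. This is where the bound $|p_i|\le 1$ and the closed-form variation-of-parameters estimate pay off, because they give the uniform bound $\|\shortdot q_i\|_{L^\infty[k\Delta,(k+1)\Delta]}\le \lambda + \mu\,\|q_i\|_{L^\infty}$ with $\|q_i\|_{L^\infty}$ controlled inductively by $\max(f_i(0),\lambda/\mu)$. Once this a priori bound is in hand, the induction closes and Driver's theorem applies verbatim.
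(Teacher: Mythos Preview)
Your argument is correct and is genuinely different from the paper's. The paper's proof is a two-line citation: existence by Theorem~1 of Driver~\cite{Driver1965}, uniqueness by Theorem~2 of the same, after verifying that the right-hand side is locally Lipschitz in $(q_1(t),\dots,q_N(t))$ with constant $2\mu$ (treating $q_i(t)$ and $q_i(t-\Delta)$ as independent variables). You instead unfold the method of steps directly, exploiting the fact that the neutral term enters only through a strictly retarded argument so that each window $[k\Delta,(k+1)\Delta]$ becomes a linear Carath\'eodory ODE with measurable bounded forcing, solved explicitly by variation of parameters. What you gain is self-containment and explicit a~priori bounds ($\|q_i\|_{L^\infty}\le \max(f_i(0),\lambda/\mu)$ and hence $\|\shortdot q_i\|_{L^\infty}\le \lambda+\mu\max(f_i(0),\lambda/\mu)$ uniformly in $k$), which are not visible in the paper's proof and could be useful downstream. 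What the paper's approach buys is brevity and a clean separation of concerns: the NDDE well-posedness is delegated wholesale to Driver's general framework, so the paper need not re-derive step-method machinery or track regularity propagation by hand. Your side remark that the first step ``simultaneously verifies the hypotheses of Driver's theorem'' is accurate and ties the two approaches together.
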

\begin{proof}
The existence of the solution is given by Theorem 1 of Driver \cite{Driver1965}. The uniqueness of the solution follows from Theorem 2 of Driver \cite{Driver1965}, but we first need to ensure that the conditions of Theorem 2 are fulfilled. The theorem requires that the function
\begin{eqnarray}
\lambda \cdot \frac{\exp\Big(-\theta\big(q_i(t - \Delta)+ \delta \shortdot{q}_i(t - \Delta)\big)\Big)}{\sum_{j = 1}^N \exp\Big(-\theta\big(q_j(t - \Delta) + \delta \shortdot{q}_i(t - \Delta)\big)\Big)}  - \mu q_i(t), \quad \forall i \in \{1,...,N\}
\end{eqnarray}
satisfies the local Lipschitz condition with respect to $(q_1(t),\dots, q_N(t))$ with Lipschitz constant $L$ where $L$ is a continuous function of $(\shortdot{q}_1(t - \Delta), \dots,\shortdot{q}_N(t - \Delta))$. Here $q_i(t)$ and $q_i(t - \Delta)$ are treated as different variables, so the local Lipschitz condition with respect to $(q_1(t),\dots, q_N(t))$ is satisfied trivially with $L = 2\mu$. Therefore, the solution to the system \eqref{q_i dot} - \eqref{initial conditions} is guaranteed to be unique.
\end{proof}

\begin{theorem}\label{Thm: equilibrium existence}
The unique equilibrium of $q_i(t)$ from Equation \eqref{q_i dot} is given by 
\begin{eqnarray}
q_i(t) = q_i^* =  \frac{\lambda}{N\mu}, \quad 1 \leq i \leq N. \label{equilibrium}
\end{eqnarray}
\end{theorem}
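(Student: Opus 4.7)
The plan is to attack existence and uniqueness separately. For existence, I would simply substitute the proposed constant $q_i(t) \equiv \lambda/(N\mu)$ into Equation \eqref{q_i dot}. At a constant solution, $\dot{q}_i \equiv 0$, so the velocity terms $\delta \dot{q}_i(t-\Delta)$ vanish, all exponents reduce to $-\theta \lambda/(N\mu)$, and the MNL ratio collapses to $1/N$. Then $\lambda \cdot (1/N) - \mu \cdot \lambda/(N\mu) = 0$, verifying that the proposed value is indeed an equilibrium.

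For uniqueness, I would set $\dot{q}_i = 0$ for every $i$ in \eqref{q_i dot}, which yields the $N$ coupled algebraic conditions
\begin{equation}
\mu q_i^{\ast} \;=\; \lambda \cdot \frac{e^{-\theta q_i^{\ast}}}{S}, \qquad S \;:=\; \sum_{j=1}^{N} e^{-\theta q_j^{\ast}}.
\end{equation}
Since $\lambda,\mu,\theta>0$ and each summand in $S$ is strictly positive, the right-hand side is strictly positive, so every $q_i^{\ast}>0$. Rearranging gives
\begin{equation}
q_i^{\ast}\, e^{\theta q_i^{\ast}} \;=\; \frac{\lambda}{\mu S},
\end{equation}
whose right-hand side is a single constant independent of $i$.

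The key observation is that the map $g(x) := x\, e^{\theta x}$ is strictly increasing on $[0,\infty)$, because $g'(x) = (1+\theta x)e^{\theta x} > 0$ there. Hence $g$ is injective on $[0,\infty)$, and the $N$ equations $g(q_i^{\ast}) = \lambda/(\mu S)$ force $q_1^{\ast} = \cdots = q_N^{\ast}$. Summing the original equilibrium condition over $i$ gives $\mu \sum_i q_i^{\ast} = \lambda \sum_i p_i = \lambda$, so the common value must be $\lambda/(N\mu)$, matching Equation \eqref{equilibrium}.

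I do not anticipate a serious obstacle: the argument is elementary once the equilibrium condition is rewritten in the separable form $g(q_i^{\ast}) = \text{const}$. The only subtlety to flag is that the candidate equilibrium must a priori be sought in the nonnegative cone (queue lengths are nonnegative), and the strict positivity $q_i^{\ast}>0$ deduced from the right-hand side is what allows us to invoke monotonicity of $g$ on $[0,\infty)$ rather than on all of $\mathbb{R}$, where $g$ is not monotone.
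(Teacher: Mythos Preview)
Your proof is correct, and the existence half matches the paper exactly. For uniqueness, however, you take a genuinely different and cleaner route than the paper does.

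The paper argues uniqueness by contradiction: it first sums the equilibrium equations to get $\sum_i \bar{q}_i = \lambda/\mu$, then assumes not all $\bar{q}_i$ are equal, picks one below the mean ($\bar{q}_s = \lambda/(N\mu) - \gamma$) and one above ($\bar{q}_g = \lambda/(N\mu) + \epsilon$), uses the equilibrium condition for $\bar{q}_s$ to pin down the normalizing sum $\sum_j e^{-\theta \bar{q}_j}$, and then substitutes this into the expression for $\dot{q}_g$ to show it is strictly negative --- a contradiction. Your argument instead recasts the equilibrium condition as $g(q_i^\ast) = \lambda/(\mu S)$ with $g(x) = x e^{\theta x}$, observes that the right-hand side is the same constant for every $i$, and invokes strict monotonicity of $g$ on $[0,\infty)$ (after noting $q_i^\ast > 0$) to force all coordinates equal. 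The common value then follows from summing.

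Your approach is more direct and avoids the somewhat ad hoc selection of $\bar{q}_s$ and $\bar{q}_g$; it also makes transparent \emph{why} uniqueness holds --- namely, the injectivity of $g$ on the positive half-line. The paper's max/min style argument, on the other hand, is a technique that generalizes more readily to settings where the equilibrium condition cannot be separated into the form $g(q_i^\ast) = \text{const}$. Both are valid; yours is the more economical for this particular system.
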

\begin{proof}
See the Appendix for the proof.
\end{proof}


The stability of the equilibrium can be determined by the stability of the linearized system of equations \cite{hale1993,smith2011}. Hence, we proceed by linearizing $q_i$ about the equilibrium, and 
finding the characteristic equation.


\begin{proposition} \label{Proposition: char eq}
The characteristic equation of \eqref{q_i dot} is given by
\begin{eqnarray}
\Phi(R, \Delta) &=& -R -  \frac{\lambda \theta}{N} \Big( e^{-R\Delta} + \delta R e^{-R\Delta}  \Big)  - \mu  = 0. \label{char eq}
\end{eqnarray}
\end{proposition}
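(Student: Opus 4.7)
The plan is to linearize the NDDE system \eqref{q_i dot} about the symmetric equilibrium $q_i^* = \lambda/(N\mu)$ from Theorem \ref{Thm: equilibrium existence} and then look for exponential solutions $u_i(t) = c_i e^{Rt}$ of the linearization. Writing $q_i(t) = q_i^* + u_i(t)$, the key input is the Jacobian of the MNL weight $p_i$ of Equation \eqref{p_i} viewed as a function of its $2N$ arguments $y_k := q_k(t-\Delta) + \delta \shortdot{q}_k(t-\Delta)$. A short calculation at the symmetric point $y_k = q^*$ gives
\begin{eqnarray*}
\left.\frac{\partial p_i}{\partial y_k}\right|_{*} \;=\; \frac{\theta}{N^{2}} \;-\; \frac{\theta}{N}\,\mathbbm{1}_{\{i=k\}},
\end{eqnarray*}
so the linearized equations read
\begin{eqnarray*}
\shortdot{u}_i(t) \;=\; \lambda\sum_{k=1}^{N}\left(\frac{\theta}{N^{2}}-\frac{\theta}{N}\mathbbm{1}_{\{i=k\}}\right)\bigl(u_k(t-\Delta)+\delta\,\shortdot{u}_k(t-\Delta)\bigr) \;-\;\mu\,u_i(t).
\end{eqnarray*}

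Next I would substitute the ansatz $u_i(t)=c_i e^{Rt}$. Setting $K := \tfrac{\lambda\theta}{N}(1+\delta R)e^{-R\Delta}$, the system collapses to
\begin{eqnarray*}
(R+\mu+K)\,c_i \;=\; \frac{K}{N}\sum_{k=1}^{N} c_k, \qquad 1\le i\le N,
\end{eqnarray*}
that is, $\bigl[(R+\mu+K)\,I - \tfrac{K}{N}\mathbf{1}\mathbf{1}^{T}\bigr]\mathbf{c}=0$. Because the matrix is a multiple of the identity plus a rank-one piece, its determinant factors explicitly: the eigenvalue along $\mathbf{1}$ is $R+\mu+K-K = R+\mu$, while the eigenvalue on the orthogonal complement $\{\mathbf{c}:\sum c_k=0\}$ has multiplicity $N-1$ and equals $R+\mu+K$. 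The vanishing of the determinant therefore yields
\begin{eqnarray*}
(R+\mu)\,\bigl(R+\mu+K\bigr)^{N-1} \;=\; 0.
\end{eqnarray*}

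Finally I would discard the spurious factor $R+\mu$: it corresponds to the conservation mode $c_1=\cdots=c_N$, i.e.\ a uniform shift of all queues in which the MNL probabilities are identically $1/N$ and the delay has no effect. This root $R=-\mu$ is always stable and carries no information about bifurcations. The remaining factor $R+\mu+K = 0$ is precisely
\begin{eqnarray*}
-R \;-\; \frac{\lambda\theta}{N}\bigl(e^{-R\Delta}+\delta R\,e^{-R\Delta}\bigr)\;-\;\mu \;=\;0,
\end{eqnarray*}
which is Equation \eqref{char eq}. The only nontrivial step I anticipate is the Jacobian computation of the MNL function with its doubled delayed argument list; after that, the algebraic simplification is forced by the rank-one-plus-scalar structure of the resulting matrix.
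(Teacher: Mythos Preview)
Your proposal is correct and follows essentially the same route as the paper: linearize about $q_i^*=\lambda/(N\mu)$, exploit the permutation symmetry so that the linear operator is a scalar multiple of the identity plus a rank-one piece built from the all-ones vector, and separate the harmless ``sum'' mode $R=-\mu$ from the $(N-1)$-fold repeated factor that yields \eqref{char eq}. The only cosmetic difference is that the paper diagonalizes the all-ones matrix $A=VDM$ and changes variables to $\mathbf{w}=M\mathbf{u}$ before substituting $e^{Rt}$, whereas you substitute the exponential ansatz first and then read off the two eigenvalues of $(R+\mu+K)I-\tfrac{K}{N}\mathbf{1}\mathbf{1}^{T}$ directly; both arguments use the same spectral decomposition.
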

\begin{proof}
We introduce the functions $u_i(t)$ that represent the deviation of $q_i(t)$ from the equilibrium:
\begin{eqnarray}
u_i(t) = q_i(t) - q_i^* =  q_i(t) -  \frac{\lambda}{N \mu} .
\end{eqnarray}
Once the NDDE's are linearized, $\shortdot{u}_i(t)$ are approximated as
\begin{eqnarray}
\shortdot{u}_i(t) \approx -  \frac{\lambda\theta }{N}\Big(u_i(t-\Delta) +  \delta u'_i(t-\Delta) \Big) + \frac{\lambda\theta }{N^2}\sum_{j =1}^N\Big(u_j(t-\Delta) +  \delta u'_j(t-\Delta) \Big)  - \mu u_i(t) 
\end{eqnarray}
In the vector form, we have 
\begin{eqnarray}
\label{u dot vector}
\shortdot{{\bf u}}(t) = -  \frac{\lambda \theta }{N} \cdot \big({\bf u}(t-\Delta) + \delta {\bf u'}(t-\Delta) \big) + \frac{\lambda\theta }{N^2} A \big({\bf u}(t-\Delta) + \delta {\bf u'}(t-\Delta) \big) - \mu {\bf u}(t)  ,
\end{eqnarray}
where $A \in \mathbb{R}^{N \times N}$, and $A_{ij} = 1$ for $1 \leq i,j \leq N$. The matrix $A$ can be diagonalized:
\begin{eqnarray}
A = VDM, \quad \text{where} \quad  V,D,M \in \mathbb{R}^{N \times N} ,\\
VM = MV = I, \quad D_{ij} = 0 \quad \text{if} \quad i \neq j.
\end{eqnarray}
Since all rows of $A$ are identical, $A$ has only one eigenvalue. This implies diagonal matrix $D$  has only one nonzero element $D_{11} = N$.  This property can be exploited with the introduction of a vector ${\bf w}(t)$:
\begin{eqnarray}
{ \bf u}(t) = V{\bf w}(t).
\end{eqnarray}
That is an acceptable form of definition because $V$ is invertible. Equation \eqref{u dot vector} becomes 
\begin{eqnarray}
V\shortdot{{\bf w}}(t) = -  \frac{\lambda \theta}{N}  V\big({\bf w}(t-\Delta) + \delta {\bf w'}(t-\Delta) \big) \\
+ \frac{\lambda \theta}{N^2} VDM V\big({\bf w}(t-\Delta) + \delta {\bf w'}(t-\Delta) \big)  - \mu V{\bf w}(t).
\end{eqnarray}
Pre-multiplying this equation by $M$ yields the following simplification,
\begin{eqnarray}
\shortdot{{\bf w}}(t) = -  \frac{\lambda \theta}{N}  \big({\bf w}(t-\Delta) + \delta {\bf w'}(t-\Delta) \big) \\
+ \frac{\lambda\theta}{N^2} D\big({\bf w}(t-\Delta) + \delta {\bf w'}(t-\Delta) \big) - \mu {\bf w}(t).
\end{eqnarray}
Writing out $D$ explicitly reduces the system of $N$ equations down to just two equations:
\begin{eqnarray}
\label{w 1 dot}
\shortdot{w}_1(t) &=&  - \mu w_1(t), \\
\label{w 2 dot}
\shortdot{w}_i(t) &=&  -  \frac{\lambda \theta}{N} \big( w_i(t-\Delta)+\delta \shortdot{w}_i(t-\Delta)\big)  - \mu w_i(t), \qquad i \neq 1 .
\end{eqnarray}
Equation \eqref{w 1 dot} has a solution of the form $w_1(t) = a e^{- \mu t}$, so $w_1 \to 0$ over time. By assuming a solution of the form $w_i(t) = e^{Rt}$, the characteristic equation then follows from \eqref{w 2 dot}.
\end{proof}


The equilibrium is stable when all eigenvalues $R$ of the characteristic equation have negative real parts. It is evident that any real root $R$ must be negative. However, there are also infinitely many complex roots, and they depend on the delay $\Delta$. When $\delta > \frac{N}{\lambda\theta}$, the equilibrium cannot be stable for any $\Delta>0$ because there are infinitely many eigenvalues with positive real parts. We demonstrate this in the result below.

\begin{proposition}\label{proposition: positive eigs}
Suppose $\delta > \frac{N}{\lambda\theta} $. Then for any $\Delta>0$, there are infinitely many eigenvalues of the characteristic equation that have positive real parts. 
\end{proposition}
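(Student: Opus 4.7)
The plan is to identify an asymptotic ``principal'' equation whose roots all have positive real part under the hypothesis, and to use Rouché's theorem to transfer those roots to the full equation $\Phi(R,\Delta)=0$. First I would divide \eqref{char eq} by $-R$ (harmless, since $\Phi(0,\Delta)=-\mu-\lambda\theta/N\neq 0$) and set $\alpha:=\lambda\theta\delta/N$ to rewrite the characteristic equation in the form
\begin{equation*}
F(R) + \frac{1}{R}\,G(R) = 0, \qquad F(R):=1+\alpha e^{-R\Delta}, \qquad G(R):=\mu+\frac{\lambda\theta}{N}\,e^{-R\Delta}.
\end{equation*}
The hypothesis $\delta>N/(\lambda\theta)$ is precisely $\alpha>1$, which is the only place this assumption is used.

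Next I would analyse the principal part $F$. Solving $F(R)=0$ gives $e^{-R\Delta}=-1/\alpha$, hence the explicit family of simple zeros
\begin{equation*}
R_k^*=\frac{\log\alpha}{\Delta}+i\,\frac{(2k+1)\pi}{\Delta},\qquad k\in\mathbb{Z},\qquad F'(R_k^*)=-\alpha\Delta\,e^{-R_k^*\Delta}=\Delta.
\end{equation*}
Because $\alpha>1$, every $R_k^*$ has the same strictly positive real part $\log\alpha/\Delta$, and the family is unbounded along the imaginary axis.

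I would then apply Rouché's theorem on the disks $|R-R_k^*|\leq\epsilon$, where $\epsilon$ is fixed once and for all with $\epsilon<\min\bigl(\log\alpha/\Delta,\ \pi/\Delta\bigr)$ (the first ensures positive real part of any root found, the second ensures the disks are pairwise disjoint). A Taylor expansion at the simple zero $R_k^*$, combined with the uniform bound $|F''(R)|=\alpha\Delta^2 e^{-\mathrm{Re}(R)\Delta}\leq \Delta^2 e^{\epsilon\Delta}$ valid on the union of these circles, gives $|F(R)|\geq \tfrac{\Delta}{2}\epsilon$ on each circle with a constant independent of $k$. The remainder $G(R)/R$ has modulus $O(1/|k|)$ on the same circles, because $|G|$ is uniformly bounded near the asymptotic roots while $|R|\geq (2|k|+1)\pi/\Delta-\epsilon$. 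For all $|k|$ sufficiently large the principal part dominates the perturbation on the circle, so Rouché's theorem yields a unique root $R_k$ of $\Phi$ inside $|R-R_k^*|<\epsilon$. By the choice of $\epsilon$, $\mathrm{Re}(R_k)>\log\alpha/\Delta-\epsilon>0$ and the $R_k$ are pairwise distinct, producing infinitely many eigenvalues with positive real part for every $\Delta>0$.

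The only delicate step is making the Rouché estimate uniform in $k$, but this is routine: both bounds are controlled by $|e^{-R\Delta}|\approx 1/\alpha$ on the asymptotic circles, so the lower bound on $|F|$ depends only on $\epsilon$ while the perturbation $G/R$ decays like $1/|k|$ from the linear growth of $|R_k^*|$.
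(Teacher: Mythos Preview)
Your argument is correct: dividing \eqref{char eq} by $-R$ and isolating the neutral part $F(R)=1+\alpha e^{-R\Delta}$ with $\alpha=\lambda\theta\delta/N>1$ is exactly the right decomposition, the explicit principal roots $R_k^*$ all have real part $\log\alpha/\Delta>0$, and the Rouch\'e estimate is uniform in $k$ for the reason you give. The only point you gloss over is that $\epsilon$ must also be small enough that the second-order term $\tfrac{1}{2}\Delta^2 e^{\epsilon\Delta}\epsilon^2$ does not swallow the first-order term $\Delta\epsilon$; this is of course automatic once $\epsilon$ is sufficiently small, but strictly speaking it is a third constraint on $\epsilon$ beyond the two you list.

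The paper takes a quite different route. It writes $R=a+ib$, separates the characteristic equation into real and imaginary parts, solves the resulting linear system for $\cos(b\Delta)$ and $\sin(b\Delta)$, and uses the Pythagorean identity to express $b$ explicitly as a function of $a$ via \eqref{b complex 1}. It then splits into three cases according to whether $\delta\mu$ is greater than, less than, or equal to $1$, and in each case locates an interval of positive $a$ on which $b(a)$ is real and $b\to\infty$ at one endpoint; an intermediate-value argument on the sine equation then yields infinitely many roots with $a>0$. Your approach is more conceptual and avoids the case split entirely: the hypothesis $\alpha>1$ enters exactly once, and you get the asymptotic location of the unstable eigenvalues (a vertical chain with common real part $\approx\log\alpha/\Delta$) for free. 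The paper's argument, by contrast, is completely elementary and requires no complex analysis beyond separating real and imaginary parts, at the cost of a longer and more case-dependent computation.
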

\begin{proof}
Suppose $\delta > \frac{N}{\lambda\theta} $. We assume $R = a+ i b$ with $a,b \in \mathbb{R}$. We can assume $b \geq 0$ without loss of generality. Plugging in $R$ and separating the real and imaginary parts:
\begin{eqnarray}
-(a+\mu)N = e^{-a\Delta} \lambda\theta \Big((1+a\delta)\cos(b\Delta) + b\delta \sin(b \Delta)\Big) \\
bN = e^{-a\Delta} \lambda\theta\Big(-b\delta  \cos(b\Delta) + (1+a\delta)  \sin(b \Delta)\Big).
\end{eqnarray}
We find the expressions for sine and cosine to be
\begin{eqnarray}
\cos(b\Delta) &=& - \frac{e^{a\Delta}\Big((a + \mu)(1+a\delta)N + b^2\delta N\Big)}{\lambda\theta \Big((a\delta + 1)^2 + (b\delta)^2\Big)} \label{cos complex 1}\\ \label{sin complex 1}
\sin(b\Delta) &=& -\frac{e^{a\Delta}Nb(\delta\mu - 1)}{\lambda \theta \big( (1+a\delta)^2 + (b\delta)^2 \big)}.
\end{eqnarray}
The identity $\sin^2(b\Delta) + \cos^2(b\Delta) = 1$ gives an expression for $b$, 
\begin{eqnarray}
b = \sqrt{\frac{\lambda^2\theta^2(a\delta + 1)^2 - e^{2a\Delta}N^2(a+\mu)^2}{e^{2a\Delta}N^2 - \delta^2\lambda^2\theta^2}}. \label{b complex 1}
\end{eqnarray}
We will now show that there are infinitely many eigenvalues $R$, where Re$[R] = a >0$, by separately considering the cases when $\delta\mu >1$, $\delta \mu <1$, and $\delta \mu = 1$. 

\paragraph{Case 1: $\delta\mu >1$.} We will construct an interval $(a_1,a_2)$ with $0<a_1<a_2$, which contains infinitely many values Re$[R] = a$ that together with $b$ from Equation \eqref{b complex 1} satisfy the characteristic equation. We will choose $a$ to be such that both the numerator and the denominator of $b$ are negative, therefore guaranteeing $b$ to be be real. This yields two inequalities
\begin{eqnarray}
\frac{\delta \theta \lambda}{N} > e^{a\Delta} > \frac{\theta \lambda (1+a\delta)}{N(a+\mu)}. \label{inequality 1}
\end{eqnarray}
Since $\frac{\delta \theta \lambda}{N} >1$ by the assumption that $\delta >\frac{N}{\theta \lambda}$, then the inequality $\frac{\delta \theta \lambda}{N} > e^{a\Delta}$ holds for $a \in [0, a_2)$, where $a_2 = \frac{1}{\Delta} \ln(\frac{\delta \theta \lambda}{N}) >0$. Further, as $a$ increases, the exponent $e^{a\Delta}$ must inevitably outgrow $\frac{\theta \lambda (1+a\delta)}{N(a+\mu)}$, so there exists $a_1 \geq 0$ such the second part of inequality from Equation \eqref{inequality 1} holds for all $a \geq a_1$. Lastly, note that the condition $\frac{\delta \theta \lambda}{N}>\frac{\theta \lambda (1+a\delta)}{N(a+\mu)}$ holds for all $a \geq 0$ because $\delta\mu >1$, so we can choose $a_1$ to be less than $a_2$, i.e. $a_1 \in (0, a_2)$. This shows that there exists an interval $(a_1, a_2)$ with $0 < a_1 < a_2$ where the inequalities from Equation \eqref{inequality 1} hold, so by Equation \eqref{b complex 1} we have $0 \neq b \in R$ for all $a \in (a_1, a_2)$.

If $b \in \mathbb{R}$ satisfies Equation \eqref{sin complex 1} for some value of $a$, then $R$ is an eigenvalue of the characteristic equation.  To show that there are infinitely many eigenvalues with real parts in $(a_1,a_2)$, we consider the limit $a \to a_2^-$, when the denominator of $b$ approaches zero and $b \to \infty$. In this limit, the right hand side of Equation \eqref{sin complex 1} will oscillate between $-1$ and $1$ infinite number of times, while the left hand side of Equation \eqref{sin complex 1}  will converge to $0$. Hence, there are infinitely many solutions to Equation \eqref{sin complex 1} with $a\in (a_1, a_2)$, and so there are infinitely many eigenvalues with positive real parts. 

\paragraph{Case 2: $\delta\mu <1$.} The argument here is analogous to Case 1, except to guarantee that $b$ from Equation \eqref{b complex 1} is real-valued, we will determine an interval in the range of $a$ where the numerator and the denominator of $b$ are {\it positive}. We get the condition 
\begin{eqnarray}
\frac{\delta \theta \lambda}{N} < e^{a\Delta} < \frac{\theta \lambda (1+a\delta)}{N(a+\mu)}. \label{inequality 2}
\end{eqnarray}
 At $a = 0$, $\frac{\theta \lambda (1+a\delta)}{N(a+\mu)} = \frac{\theta \lambda\delta}{N \mu \delta} > \frac{\delta\theta \lambda}{N } > 1$, so there is an interval $[0, a_2)$ for $a$ where $e^{a\Delta} < \frac{\theta \lambda (1+a\delta)}{N(a+\mu)}$ holds. Further, $\frac{\delta \theta \lambda}{N} < \frac{\theta \lambda (1+a\delta)}{N(a+\mu)}$ holds for all $a \geq 0$ because $\delta\mu <1$, therefore $a_1 = \frac{1}{\Delta} \ln(\frac{\delta \theta \lambda}{N})>0$ must be smaller than $a_2$. Therefore for all $a \in (a_1, a_2)$, with $0< a_1 <a_2$, $b \in \mathbb{R}$.

Just as in Case 1, when $a \to a_1^+$, $b \to \infty$ so the right hand side of Equation \eqref{sin complex 1} will oscillate between 1 and -1 infinitely many times, while the left hand side will converge to $0$. Thus, there will be infinitely many eigenvalues that satisfy the characteristic equation \eqref{char eq}. 

\paragraph{Case 3: $\delta\mu =1$.} In this case, the expressions for sine and cosine simplify to
\begin{eqnarray}
\cos(b \Delta) = -\frac{e^{a\Delta}N}{\lambda\theta\delta}, \quad \sin(b\Delta) = 0,
\end{eqnarray}
so $b = (2k-1)\pi/\Delta$ for $k = 1,2, \dots $, and $a = \frac{1}{\Delta}\ln(\frac{\lambda\theta\delta}{N})$. Since $\delta > \frac{N}{\theta \lambda}$, then $a>0$, and the characteristic equation\eqref{char eq} has infinitely many eigenvalues with positive real parts. 
\end{proof}


However, when the weight coefficient $\delta$ is sufficiently small, i.e. $\delta < \frac{N}{\lambda\theta}$, then given a sufficiently small delay the queues converge to a stable equilibrium. As the next result shows, the stability is due to all complex eigenvalues having negative real parts.


\begin{proposition} \label{proposition: negative eigs}
Suppose $\delta < \frac{N}{\lambda\theta} $. When $\Delta$ is sufficiently small, all eigenvalues of the characteristic equation have negative real parts. 
\end{proposition}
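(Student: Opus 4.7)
The plan is to combine a uniform bound on potentially unstable eigenvalues with a Rouché comparison between the NDDE characteristic equation and its algebraic $\Delta = 0$ limit. The hypothesis $\delta < N/(\lambda\theta)$ is precisely what makes the bound nontrivial.

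First I would rewrite Equation \eqref{char eq} as $R + \mu = -\frac{\lambda\theta}{N}(1+\delta R) e^{-R\Delta}$ and, for any eigenvalue $R = a + ib$ with $a \geq 0$, take absolute values. The inequalities $|R+\mu| \geq |R|$ (valid because $a, \mu \geq 0$), $|e^{-R\Delta}| \leq 1$ (because $a\Delta \geq 0$), and $|1+\delta R| \leq 1 + \delta|R|$ together give $|R|\,(1 - \delta\lambda\theta/N) \leq \lambda\theta/N$. Since $1 - \delta\lambda\theta/N > 0$ by assumption, this yields $|R| \leq M := \lambda\theta/(N - \delta\lambda\theta)$, a bound independent of $\Delta$. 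Consequently every eigenvalue with non-negative real part lies in the compact half-disk $\{R : \mathrm{Re}\,R \geq 0,\ |R| \leq M\}$.

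Second, at $\Delta = 0$ the characteristic equation collapses to the linear equation $-(1 + \delta\lambda\theta/N)R - (\mu + \lambda\theta/N) = 0$ whose unique root $R^* = -(\mu + \lambda\theta/N)/(1 + \delta\lambda\theta/N)$ is strictly negative. Choose an open region $\Omega$ (for instance $\{R : \mathrm{Re}\,R > -\varepsilon,\ |R| < M + 1\}$ with $0 < \varepsilon < |R^*|$) that contains the half-disk above but excludes $R^*$; then $\Phi(R, 0)$ is zero-free on $\overline{\Omega}$ and, by compactness, bounded below in modulus by some $c > 0$ there. Because $\Phi(R, \Delta) - \Phi(R, 0) = -\frac{\lambda\theta}{N}(1+\delta R)(e^{-R\Delta} - 1)$ is $O(\Delta)$ uniformly on $\overline{\Omega}$, for all sufficiently small $\Delta > 0$ we have $|\Phi(R, \Delta) - \Phi(R, 0)| < |\Phi(R, 0)|$ on $\partial\Omega$. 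Rouché's theorem then delivers that $\Phi(\cdot, \Delta)$ has the same number of zeros in $\Omega$ as $\Phi(\cdot, 0)$, namely zero.

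Combining the two ingredients, any eigenvalue with $\mathrm{Re}\,R \geq 0$ is forced into $\Omega$ by the first step yet $\Omega$ contains no eigenvalue by the second, so no such eigenvalue exists and all roots of the characteristic equation lie in the open left half-plane. The main obstacle I anticipate is the NDDE's infinite spectrum: in principle infinitely many roots can accumulate with positive real part, and indeed Proposition \ref{proposition: positive eigs} exhibits this whenever $\delta > N/(\lambda\theta)$. The strict inequality $\delta < N/(\lambda\theta)$ is exactly what blocks such accumulation by producing the $\Delta$-independent radial bound $M$; with that in hand, only finitely many eigenvalues can ever be candidates for instability and the Rouché/continuity argument closes the proof.
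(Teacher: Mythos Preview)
Your argument is correct and takes a genuinely different route from the paper. The paper proceeds by writing $R = a + ib$ with $a \geq 0$, separating real and imaginary parts to obtain closed formulas for $\cos(b\Delta)$ and $\sin(b\Delta)$, and using the identity $\sin^2 + \cos^2 = 1$ to derive the explicit expression
\[
b = \sqrt{\frac{\lambda^2\theta^2(a\delta + 1)^2 - e^{2a\Delta}N^2(a+\mu)^2}{e^{2a\Delta}N^2 - \delta^2\lambda^2\theta^2}}.
\]
From this it extracts a $\Delta$-free upper bound $b \leq B$ and, separately, the observation that $\cos(b\Delta) < 0$ forces $b\Delta > \pi/2$; these two facts collide once $\Delta < \pi/(4B)$. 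By contrast, you never split into real and imaginary parts: a single modulus estimate on $R + \mu = -\tfrac{\lambda\theta}{N}(1+\delta R)e^{-R\Delta}$ gives the uniform radial bound $|R| \leq M$, and then Rouch\'e against the linear $\Delta = 0$ problem rules out any zero in the right half-plane.

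Both arguments hinge on the same mechanism---the hypothesis $\delta < N/(\lambda\theta)$ produces a $\Delta$-independent a priori bound on any potentially unstable root---but exploit it differently. The paper's computation is completely elementary and yields an explicit threshold $\Delta_0 = \pi/(4B)$, which is consonant with the explicit formulas for $\omega_{cr}$ and $\Delta_{cr}$ used later. Your Rouch\'e approach is shorter and more robust: it would transfer unchanged to other NDDE characteristic equations of the form $R + \mu + g(R)e^{-R\Delta} = 0$ whenever $|g(R)| \leq c_0 + c_1|R|$ with $c_1 < 1$, and it makes transparent why the neutral coefficient crossing $1$ (i.e., $\delta\lambda\theta/N = 1$) is the sharp boundary between Propositions~\ref{proposition: positive eigs} and~\ref{proposition: negative eigs}. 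The trade-off is that your argument is nonconstructive in $\Delta$ and invokes a complex-analysis tool the paper otherwise avoids.
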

\begin{proof}
To reach contradiction, let us assume that for any $\Delta_0>0$ there exists some $\Delta \in (0, \Delta_0)$ and an eigenvalue $R = a+i b$ with $a \geq 0$ that satisfy the characteristic equation \eqref{char eq}. We can assume $b \geq 0$ without loss of generality. Plugging in $R$ and separating the real and imaginary parts: 
\begin{eqnarray}
-(a+\mu)N = e^{-a\Delta} \lambda\theta \Big((1+a\delta)\cos(b\Delta) + b\delta \sin(b \Delta)\Big) \\
bN = e^{-a\Delta} \lambda\theta\Big(-b\delta  \cos(b\Delta) + (1+a\delta)  \sin(b \Delta)\Big).
\end{eqnarray}
Solving for sine and cosine, we find 
\begin{eqnarray}
\cos(b\Delta) &=& - \frac{e^{a\Delta}\Big((a + \mu)(1+a\delta)N + b^2\delta N\Big)}{\lambda\theta \Big((a\delta + 1)^2 + (b\delta)^2\Big)} \label{cos complex}\\ \label{sin complex}
\sin(b\Delta) &=& -\frac{e^{a\Delta}Nb(\delta\mu - 1)}{\lambda \theta \big( (1+a\delta)^2 + (b\delta)^2 \big)} 
\end{eqnarray}
The identity $\sin^2(b\Delta) + \cos^2(b\Delta) = 1$ gives an expression for $b$, 
\begin{eqnarray}
b = \sqrt{\frac{\lambda^2\theta^2(a\delta + 1)^2 - e^{2a\Delta}N^2(a+\mu)^2}{e^{2a\Delta}N^2 - \delta^2\lambda^2\theta^2}}. \label{b complex}
\end{eqnarray}
Since $e^{2a\Delta} \geq 1$ and $N>\delta\lambda\theta$ by assumption, the denominator of $b$ is positive, so the numerator of $b$ must be non-negative. Therefore we get inequalities
\begin{eqnarray}
1 \leq e^{a\Delta} \leq \frac{\lambda\theta(a\delta + 1)}{N(a+\mu)}, \quad e^{a\Delta}> \frac{\delta \lambda\theta}{N}. \label{ineq 1}
\end{eqnarray}
From the first inequality, we obtain an upper bound on $a$, $a \leq \frac{\lambda\theta - N\mu}{N - \lambda\theta \delta}$. If $\lambda\theta < N\mu$, then $a < 0$ so we reached a contradiction. If $\lambda\theta \geq N\mu$, then we use \eqref{b complex} and \eqref{ineq 1} to find an upper bound on $b$:
\begin{eqnarray}
b \leq B = \sqrt{\frac{2N\lambda^2\theta^2(1 - \delta \mu)^2}{N^2 - \delta^2 \lambda^2\theta^2}}, \quad B>0. \label{bound B}
\end{eqnarray}
However, we note from the cosine equation \eqref{cos complex} that $\cos(b\Delta)<0$. Since $b$ is non-negative then $b\Delta > \frac{\pi}{2}$ so $b > \frac{\pi}{2\Delta}$ for any $\Delta$. Choose $\Delta_0 = \frac{\pi}{4B}$. Then for any $\Delta< \Delta_0$ we get a contradiction with \eqref{bound B}:
\begin{eqnarray}
b > \frac{\pi}{2\Delta} > \frac{\pi}{2\Delta_0} > 2B.
\end{eqnarray}
Hence when $\Delta_0$ is sufficiently small, then for any $\Delta < \Delta_0$ the real part of any eigenvalue is negative. 
\end{proof}


An interesting edge case, however, is when $\delta = \frac{N}{\lambda \theta}$. If the equality holds, three different behaviors may be observed. The equilibrium will be stable, regardless of the size of the delay, if $\delta \mu > 1$. However, the equilibrium will be unstable if $\delta \mu <1$. Further, if $\delta \mu = 1$ then the behavior of the queues cannot be determined from the characteristic equation \eqref{char eq}, as all eigenvalues will be purely imaginary. We justify these findings in the result below. 


\begin{proposition}\label{prop: delta = N/lambda theta}
Suppose $\delta = \frac{N}{\lambda \theta}$. If $\delta \mu < 1$, then for any $\Delta$ there exists at least one eigenvalue with positive real part. If $\delta \mu > 1$, then all eigenvalues have negative real parts. Further, if $\delta \mu = 1$ then all eigenvalues are purely imaginary.
\end{proposition}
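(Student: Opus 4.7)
I would substitute $\delta = N/(\lambda\theta)$ into the characteristic equation \eqref{char eq}; since this gives $\lambda\theta/N = 1/\delta$, the equation collapses to
\begin{equation}
(R+\mu) + (R + 1/\delta)\,e^{-R\Delta} = 0, \label{prop:char}
\end{equation}
and the three sub-cases of the proposition are then distinguished by comparing $\mu$ with $1/\delta$.

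The two easy sub-cases can be dispatched directly. If $\delta\mu = 1$ then $1/\delta = \mu$, so \eqref{prop:char} factors as $(R+\mu)(1+e^{-R\Delta}) = 0$; the non-trivial factor $e^{-R\Delta}=-1$ has the purely imaginary roots $R = i(2k-1)\pi/\Delta$, $k\in\mathbb{Z}\setminus\{0\}$. If $\delta\mu > 1$, I would take moduli of each side of \eqref{prop:char} with $R = a + ib$, getting
\begin{equation}
(a+\mu)^2 + b^2 = \bigl[(a+1/\delta)^2+b^2\bigr]e^{-2a\Delta}.
\end{equation}
If we had $a\geq 0$, then $e^{-2a\Delta}\leq 1$ would force $(a+\mu)^2\leq (a+1/\delta)^2$ and hence $\mu\leq 1/\delta$, contradicting $\delta\mu > 1$. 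So every eigenvalue has negative real part.

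The hard sub-case is $\delta\mu < 1$. Here I would mirror the strategy of Proposition \ref{proposition: positive eigs}: setting $R = a + ib$ in \eqref{prop:char}, separating real and imaginary parts, solving the resulting linear system for $\cos(b\Delta)$ and $\sin(b\Delta)$, and then using $\sin^2 + \cos^2 = 1$ to obtain
\begin{equation}
\sin(b\Delta) = \frac{b\delta\,e^{a\Delta}(1-\delta\mu)}{(1+a\delta)^2 + (b\delta)^2}, \qquad b^2 = \frac{(a+1/\delta)^2 - e^{2a\Delta}(a+\mu)^2}{e^{2a\Delta}-1}.
\end{equation}
Since $\delta\mu < 1$, as $a\to 0^+$ the numerator of $b^2$ tends to $(1/\delta)^2-\mu^2 > 0$ while the denominator vanishes, so $b\to\infty$; there is also an $a_{\max}>0$, the unique positive root of $a+1/\delta = e^{a\Delta}(a+\mu)$, at which $b\to 0$. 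On $(0,a_{\max})$ the function $b(a)$ is real and sweeps from $\infty$ down to $0$. Along this curve the right-hand side of the sine equation is positive and decays like $1/b$ as $a\to 0^+$, while $\sin(b\Delta)$ oscillates through $[-1,1]$ infinitely many times, so the sine equation is satisfied infinitely often, each time producing an eigenvalue with positive real part.

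The main obstacle is the sign bookkeeping in this last sub-case: one must check that the $(a,b)$ pairs delivered by the Pythagorean identity are not spurious. This is handled by noting that, once $b$ has been fixed and chosen positive, the closed-form expressions above pin down both $\sin(b\Delta)$ and $\cos(b\Delta)$ simultaneously, so only the sine equation needs to be verified, and the oscillation-versus-decay argument does exactly that. The sub-cases $\delta\mu = 1$ and $\delta\mu > 1$ are comparatively clean, as they rely only on algebraic factorization and a modulus estimate.
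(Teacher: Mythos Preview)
Your argument is correct and, for the hard sub-case $\delta\mu<1$, follows exactly the paper's route: substitute $R=a+ib$, extract $\sin(b\Delta)$ and $\cos(b\Delta)$, use the Pythagorean identity to get $b=b(a)$, and exploit $b\to\infty$ as $a\to 0^+$ to produce infinitely many crossings. For the two easy sub-cases you actually improve on the paper. Where the paper runs all three cases through the same $\sin/\cos/b$ machinery, you dispatch $\delta\mu=1$ by the clean factorization $(R+\mu)(1+e^{-R\Delta})=0$ and handle $\delta\mu>1$ with a one-line modulus estimate; both are shorter and more transparent than the paper's treatment.

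One small caveat on your sign-bookkeeping paragraph: knowing $S^2+C^2=1$ and verifying $\sin(b\Delta)=S$ only forces $\cos(b\Delta)=\pm C$, not $\cos(b\Delta)=C$. The paper glosses over the same point. The fix is easy: since $C(a)<0$ for $a>0$, restrict attention to those crossings where $b(a)\Delta$ lies in an interval $\bigl((4k+1)\tfrac{\pi}{2},(4k+3)\tfrac{\pi}{2}\bigr)$, of which there are still infinitely many as $b(a)\Delta\to\infty$; on each such interval $\cos(b\Delta)<0$, so the correct sign is automatic. Also note (as your factorization already reveals) that in the $\delta\mu=1$ case the root $R=-\mu$ is real and negative, so the proposition's phrase ``all eigenvalues are purely imaginary'' should be read as referring to the non-real ones.
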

\begin{proof}
As in Propositions \eqref{proposition: positive eigs} - \eqref{proposition: negative eigs}, we express the eigenvalue as $R = a + i b$, and then separate the real and imaginary parts of the characteristic equation. The assumption $\delta = \frac{N}{\lambda \theta}$ simplifies the expressions to be
\begin{eqnarray}
\sin(b \Delta) = -\frac{b N e^{a \Delta } (\mu  N-\theta  \lambda )}{N^2b^2 + (a N +\theta \lambda)^2} \label{sin expr} \\
\cos(b \Delta) = -\frac{N e^{a \Delta } \left(a^2 N+a \theta  \lambda +a \mu  N+b^2 N+\theta  \lambda  \mu \right)}{N^2b^2 + (a N +\theta \lambda)^2} \label{cos expr}
\end{eqnarray}
We will address the three cases separately. 
\paragraph{Case 1: $\delta\mu >1$.} To reach contradiction, suppose there exists an eigenvalue with a nonnegative real part, $a \geq 0$. The expression for $b$ is given by 
\begin{eqnarray}
b = \frac{\sqrt{(a N+\theta  \lambda )^2-N^2 e^{2 a \Delta } (a+\mu )^2}}{\sqrt{N^2 \left(e^{2 a \Delta }-1\right)}} \label{b expr}
\end{eqnarray}
where the denominator is positive, so the numerator must be nonnegative for $b$ to be real. Therefore $ a N + \theta  \lambda - N e^{ a \Delta } (a+\mu ) >0$. However, the assumption $\delta\mu >1$ is equivalent to $\lambda \theta < N \mu$, so we can show that
\begin{eqnarray}
a N + \theta  \lambda - N e^{ a \Delta } (a+\mu ) \leq a N + \theta  \lambda - N (a+\mu ) = \theta \lambda - N \mu <0, 
\end{eqnarray} 
and we reached a contradiction. Thus, if $\delta\mu >1$ then any eigenvalue must have a negative real part.
\paragraph{Case 2: $\delta\mu < 1$.} This condition is equivalent to  $\lambda \theta < N \mu$. Again, $b$ satisfies Equation \eqref{b expr}. As $a \to 0^+$, $b \to \infty$ so $\sin(b \Delta)$ oscillates between 1 and $-1$ infinitely quickly. Further, as $a \to 0^+$ the right hand side of Equation \eqref{sin expr}  goes to zero. Therefore, Equation \eqref{sin expr} will have infinitely many roots, while Equation \eqref{cos expr} will be satisfied at each root automatically since $b$ is given by Equation \eqref{b expr}. Therefore $\delta\mu < 1$ implies that the characteristic equation will have infinitely many eigenvalues with positive real parts. 
\paragraph{Case 3: $\delta\mu =1$.} This case is equivalent to the condition $\lambda \theta = N \mu$, which simplifies the Equations \eqref{sin expr} - \eqref{cos expr} to be
\begin{eqnarray}
\sin(b \Delta) = 0, \qquad \cos(b \Delta) = -e^{{a \Delta}}.
\end{eqnarray}
Hence, $b = (2k+1) \pi/ \Delta$, where $k = 0, 1, 2, \dots$, and $1 = e^{{a \Delta}}$ so $a = 0$. Therefore the roots of the characteristic equation \eqref{char eq} are purely imaginary.
\end{proof}

Hence, the equilibrium is stable when $\delta = \frac{N}{\lambda\theta}$ and $\delta \mu > 1$, or when $\delta < \frac{N}{\lambda\theta}$ and the delay $\Delta$ is sufficiently small. Further, the only way for the equilibrium to become unstable given that $\delta < \frac{N}{\lambda\theta}$, is if a pair of complex eigenvalues crosses from the negative real side of the complex plane into the positive real side. We will determine the threshold value of delay where the stability of the equilibrium may change by finding where the eigenvalues (if any) on the complex plane reach the imaginary axis. 


\begin{proposition} \label{proposition: im roots}
The characteristic equation \eqref{char eq} has a pair of purely imaginary solutions $R = \pm i \omega_{cr}$ with $\omega_{cr}$ being real and positive, at each root $\Delta_{cr}$, given that
\begin{eqnarray} \label{omega cr}
\omega_{cr} = \sqrt{\frac{\lambda^2\theta^2 - N^2\mu^2}{N^2 - \delta^2\lambda^2\theta^2}} 
\end{eqnarray}
and $\Delta_{cr}$ satisfies the transcendental equation
\begin{eqnarray}
\cos \bigg(\Delta_{cr}\sqrt{\frac{\lambda^2\theta^2 - N^2\mu^2}{N^2 - \delta^2\lambda^2\theta^2}}\bigg)  = - \frac{\delta \lambda^2\theta^2 + N^2 \mu}{N \lambda\theta(1 +  \delta \mu)}. \label{Hopf condition}
\end{eqnarray}
\end{proposition}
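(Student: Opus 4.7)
The plan is to substitute $R = i\omega_{cr}$ directly into the characteristic equation \eqref{char eq} and split into real and imaginary parts, which will produce a linear $2\times 2$ system in $\cos(\omega_{cr}\Delta_{cr})$ and $\sin(\omega_{cr}\Delta_{cr})$. From there, the Pythagorean identity will pin down $\omega_{cr}$, and back-substitution will give the transcendental equation for $\Delta_{cr}$.

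More concretely, first I would write $e^{-i\omega_{cr}\Delta_{cr}} = \cos(\omega_{cr}\Delta_{cr}) - i\sin(\omega_{cr}\Delta_{cr})$ and expand the product $(1 + i\omega_{cr}\delta)(\cos(\omega_{cr}\Delta_{cr}) - i\sin(\omega_{cr}\Delta_{cr}))$ coming from the $e^{-R\Delta} + \delta R e^{-R\Delta}$ factor. Setting real and imaginary parts of $\Phi(i\omega_{cr},\Delta_{cr})$ to zero yields the system
\begin{align}
\cos(\omega_{cr}\Delta_{cr}) + \omega_{cr}\delta \sin(\omega_{cr}\Delta_{cr}) &= -\tfrac{N\mu}{\lambda\theta}, \\
\sin(\omega_{cr}\Delta_{cr}) - \omega_{cr}\delta \cos(\omega_{cr}\Delta_{cr}) &= \tfrac{N\omega_{cr}}{\lambda\theta}.
\end{align}
The coefficient matrix has determinant $1+\omega_{cr}^2\delta^2 \neq 0$, so I can invert it and obtain explicit rational expressions for $\cos(\omega_{cr}\Delta_{cr})$ and $\sin(\omega_{cr}\Delta_{cr})$ in terms of $\omega_{cr}$ and the model parameters.

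Next I would impose $\sin^2 + \cos^2 = 1$. After clearing denominators this gives $N^2\bigl[(\mu + \omega_{cr}^2\delta)^2 + \omega_{cr}^2(1-\delta\mu)^2\bigr] = \lambda^2\theta^2(1+\omega_{cr}^2\delta^2)^2$. The key algebraic observation is that the bracketed expression factors as $(\mu^2+\omega_{cr}^2)(1+\omega_{cr}^2\delta^2)$, which cancels one factor of $(1+\omega_{cr}^2\delta^2)$ on each side. Solving the resulting linear equation in $\omega_{cr}^2$ produces the stated formula $\omega_{cr}^2 = (\lambda^2\theta^2 - N^2\mu^2)/(N^2 - \delta^2\lambda^2\theta^2)$; I would note that this is the only algebraic step where real/positive square roots are assumed, and mention that it is consistent with the standing regime $\delta<N/(\lambda\theta)$ and $\lambda\theta > N\mu$ identified in Propositions \ref{proposition: negative eigs} and \ref{prop: delta = N/lambda theta}.

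Finally, to obtain the transcendental equation, I would substitute the value of $\omega_{cr}^2$ back into the closed-form expression for $\cos(\omega_{cr}\Delta_{cr})$ from the linear system. This is the main computational obstacle: simplifying the resulting ratio requires recognizing the factorizations $1+\omega_{cr}^2\delta^2 = N^2(1-\delta^2\mu^2)/(N^2-\delta^2\lambda^2\theta^2)$ and $\mu + \omega_{cr}^2\delta = (1-\delta\mu)(\mu N^2 + \delta\lambda^2\theta^2)/(N^2-\delta^2\lambda^2\theta^2)$, after which a common factor $(1-\delta\mu)$ cancels against $(1-\delta^2\mu^2) = (1-\delta\mu)(1+\delta\mu)$. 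The remaining expression collapses exactly to $-(\delta\lambda^2\theta^2 + N^2\mu)/(N\lambda\theta(1+\delta\mu))$, completing \eqref{Hopf condition}. The equation is transcendental and periodic in $\Delta_{cr}$, so it has infinitely many roots whenever the right-hand side lies in $[-1,1]$.
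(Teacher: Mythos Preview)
Your proposal is correct and follows essentially the same approach as the paper: substitute $R=i\omega_{cr}$, separate real and imaginary parts, solve the resulting $2\times 2$ linear system for $\cos(\omega_{cr}\Delta_{cr})$ and $\sin(\omega_{cr}\Delta_{cr})$, use $\sin^2+\cos^2=1$ to obtain $\omega_{cr}$, and then read off the transcendental equation from the cosine expression. The only difference is that you spell out the algebraic factorizations needed to simplify the cosine expression, whereas the paper leaves that step implicit.
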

\begin{proof}
Assume that $R$ from the characteristic equation \eqref{char eq} is purely imaginary, $R = \pm i \omega_{cr}$. Plugging in $R$, the real and imaginary parts produce two equations
\begin{eqnarray}
\mu = -\frac{\lambda\theta}{N}\cos(\omega_{cr}\Delta_{cr}) - \frac{\lambda\theta}{N}\delta \omega_{cr} \sin(\omega_{cr}\Delta_{cr}) \\
\omega_{cr} = \frac{\lambda\theta}{N}\sin(\omega_{cr}\Delta_{cr}) - \frac{\lambda\theta}{N}\delta \omega_{cr} \cos(\omega_{cr}\Delta_{cr})
\end{eqnarray}
We can solve for the values of the sine and cosine functions i.e.
\begin{eqnarray}
\cos(\omega_{cr}\Delta_{cr})  = - \frac{N(\mu + \delta \omega_{cr}^2)}{\lambda\theta(1 + \delta^2 \omega_{cr}^2)}, \quad \sin(\omega_{cr}\Delta_{cr}) = \frac{N \omega_{cr}(1 - \delta \mu)}{\lambda\theta(1  + \delta^2 \omega_{cr}^2)} \label{cos Hopf}
\end{eqnarray}
and by the trigonometric identity $\sin^2(\omega_{cr}\Delta_{cr}) +\cos^2(\omega_{cr}\Delta_{cr}) =1$, $\omega_{cr}$ is found. The cosine equation from \eqref{cos Hopf} then gives the equation for $\Delta_{cr}$.
\end{proof}


Proposition \eqref{proposition: im roots} provides the infinitely many critical delays $\Delta_{cr}$ as well as the necessary conditions on the other parameters ($\omega_{cr} \in \mathbb{R}$, $\omega_{cr} \neq 0$) for when Hopf bifurcations may occur. This information allows us to prove that a Hopf bifurcation occurs at every $\Delta_{cr}$. 

\begin{theorem}\label{thm: Hopf}
Suppose $\omega_{cr}$ from Equation \eqref{omega cr} is real and nonzero. Then a Hopf bifurcation occurs at $\Delta = \Delta_{cr}$, where $\Delta_{cr}$ is any positive root of
\begin{equation} \label{delta cr}
\Delta_{cr}(\lambda, \mu) = \arccos \bigg(- \frac{\delta \lambda^2\theta^2 + N^2 \mu}{N \lambda\theta(1 + \delta \mu)}\bigg) \cdot \sqrt{\frac{N^2 - \delta^2\lambda^2\theta^2}{\lambda^2\theta^2 - N^2\mu^2}}.
\end{equation}
\end{theorem}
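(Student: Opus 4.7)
The plan is to invoke the Hopf bifurcation theorem for neutral delay differential equations (see, e.g., Hale and Verduyn Lunel). Proposition \ref{proposition: im roots} already supplies half of the required hypotheses: under the assumption that $\omega_{cr}$ is real and nonzero, at each $\Delta_{cr}$ satisfying Equation \eqref{Hopf condition} the characteristic function $\Phi$ has purely imaginary roots $R = \pm i\omega_{cr}$. What remains to be verified for Hopf is (i) simplicity of this pair, (ii) nonresonance (no other eigenvalues of the form $i k \omega_{cr}$ for integer $k$), and most importantly (iii) the transversality condition $\mathrm{Re}\bigl[dR/d\Delta\bigr]\big|_{R=i\omega_{cr},\Delta=\Delta_{cr}}\neq 0$. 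Simplicity will follow from $\Phi_R(i\omega_{cr},\Delta_{cr})\neq 0$, which in turn is equivalent to $dR/d\Delta$ being well-defined.

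To verify transversality, I would differentiate $\Phi(R,\Delta)=0$ implicitly in $\Delta$ and use the standard identity $\mathrm{sign}\bigl[\mathrm{Re}(dR/d\Delta)\bigr] = \mathrm{sign}\bigl[\mathrm{Re}((dR/d\Delta)^{-1})\bigr]$, which reduces the task to evaluating $\mathrm{Re}\bigl[-\Phi_R/\Phi_\Delta\bigr]$ at $R=i\omega_{cr}$. A direct computation from Equation \eqref{char eq} gives
\begin{eqnarray}
\Phi_R(R,\Delta) = -1 + \frac{\lambda\theta}{N}e^{-R\Delta}\bigl[\Delta(1+\delta R)-\delta\bigr], \qquad \Phi_\Delta(R,\Delta) = \frac{\lambda\theta}{N}R(1+\delta R)e^{-R\Delta}.
\end{eqnarray}
The crucial algebraic trick is to eliminate the exponential using the characteristic equation itself: at $R=i\omega_{cr}$ we have $\frac{\lambda\theta}{N}(1+\delta R)e^{-R\Delta_{cr}} = -(R+\mu)$, so $\Phi_\Delta = -R(R+\mu) = \omega_{cr}^2 - i\omega_{cr}\mu$. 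Substituting likewise in $\Phi_R$ expresses it as an explicit rational function of $i\omega_{cr}$, $\mu$, $\delta$, and $\Delta_{cr}$, with no transcendental terms remaining.

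I would then rationalize $-\Phi_R/\Phi_\Delta$ by multiplying numerator and denominator by the conjugate of $\Phi_\Delta$ and separate real from imaginary parts, yielding a closed-form expression for $\mathrm{Re}[-\Phi_R/\Phi_\Delta]$ in terms of the parameters. The claim is that this quantity is strictly nonzero whenever $\omega_{cr}>0$ and the standing parameter assumptions $\lambda,\mu,\theta,\delta \geq 0$ (with $\delta < N/(\lambda\theta)$ implicit from the setting) hold; in particular the term proportional to $\Delta_{cr}(\omega_{cr}^2 + \mu^2)$ will dominate and be positive. Combined with the fact that the pair $\pm i\omega_{cr}$ is the only eigenvalue pair on the imaginary axis at $\Delta=\Delta_{cr}$ (the rest remain bounded away by Propositions \ref{proposition: negative eigs} and the continuous dependence of the spectrum), the transversality condition is established, and the Hopf bifurcation theorem yields the conclusion.

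The main obstacle is the transversality step: the algebra for a neutral equation is bulkier than for a retarded DDE because $\Phi$ depends on $R$ both polynomially and exponentially (through the $\delta R e^{-R\Delta}$ term), so one must be careful to show that the simplified real part of $-\Phi_R/\Phi_\Delta$ does not vanish identically for any admissible parameter combination. The nonresonance check is comparatively easy because Equation \eqref{Hopf condition} fixes $\omega_{cr}$ in terms of the parameters, so any other purely imaginary root would have to coincide with $\pm i\omega_{cr}$, which contradicts simplicity already established via $\Phi_R\neq 0$.
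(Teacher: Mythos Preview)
Your strategy matches the paper's: invoke the Hopf theorem for functional differential equations after verifying simplicity, nonresonance, and transversality at $R=\pm i\omega_{cr}$, $\Delta=\Delta_{cr}$, obtaining the required derivative by implicit differentiation of $\Phi(R,\Delta)=0$. Using the reciprocal $\mathrm{Re}[-\Phi_R/\Phi_\Delta]$ and eliminating $e^{-R\Delta}$ via the characteristic equation is a standard and sound computational route; the paper computes $\mathrm{Re}[dR/d\Delta]$ directly instead, but these are equivalent.

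The gap is in your transversality step. Your claim that ``the term proportional to $\Delta_{cr}(\omega_{cr}^2+\mu^2)$ will dominate and be positive'' is not an argument: $\Delta_{cr}$ can be small relative to the remaining terms (and the theorem covers \emph{all} positive roots $\Delta_{cr}$, not just large ones), so there is nothing a priori preventing cancellation. The paper handles this by writing $\mathrm{Re}[dR/d\Delta]$ as a ratio whose numerator is $(N^2-\delta^2\lambda^2\theta^2)(1+\delta^2\omega_{cr}^2)\omega_{cr}^2$ and whose denominator $D(\Delta)$ is a \emph{quadratic} in $\Delta$; one then finds the vertex $\Delta^*$ of this quadratic and checks, after substituting the explicit $\omega_{cr}$ from Equation \eqref{omega cr}, that $D(\Delta^*) = (N^2-\delta^2\lambda^2\theta^2)^2\omega_{cr}^2/(\lambda^2\theta^2) > 0$. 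That algebraic minimization is the substantive content of the transversality proof and is what your sketch is missing. Note also that the theorem does \emph{not} assume $\delta<N/(\lambda\theta)$; the quadratic argument shows $D>0$ regardless, and the sign of $\mathrm{Re}[dR/d\Delta]$ is then determined by the numerator, which flips sign at $\delta=N/(\lambda\theta)$.

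Your nonresonance argument is slightly garbled. The clean reason no eigenvalue $ik\omega_{cr}$ with $|k|\ne 1$ can occur is that Proposition \ref{proposition: im roots} pins down the modulus of any purely imaginary eigenvalue as the \emph{single} value $\omega_{cr}$ from Equation \eqref{omega cr}, independent of $\Delta$; simplicity plays no role here.
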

\begin{proof}
By Proposition \eqref{proposition: im roots}, at each $\Delta_{cr}$ there is a pair of purely imaginary eigenvalues $R = i \omega_{cr}$, $\bar{R} = -i \omega_{cr}$. A Hopf bifurcation can only occur if $\frac{d}{d\Delta}$Re$[R(\Delta_{cr})] \neq 0$. To verify this, we assume that $R(\Delta) = \alpha(\Delta) + i \omega(\Delta)$. The characteristic equation \eqref{char eq} is differentiated with respect to delay, and we find that at $\Delta_{cr}$ where $\alpha = 0$ and $\omega = \omega_{cr}$, $\frac{d}{d\Delta}$Re$[R]$ is given by
\begin{eqnarray} \label{dalpha ddelta}
\frac{d\alpha}{d \Delta} = \frac{(N^2 - \delta^2 \lambda^2\theta^2)(1 + \delta^2 \omega^2)\omega^2}{\lambda^2\theta^2(1 + \delta^2 \omega^2)\big((\delta - \Delta)^2 + \delta^2 \Delta^2 \omega^2 \big) + N^2\big(1 - 2\delta\mu + 2\Delta\mu +\delta^2\omega^2(2\Delta\mu -1)\big)}.
\end{eqnarray}
The assumption $\omega_{cr} >0$ guarantees the numerator of $\frac{d\alpha}{d \Delta}(\Delta_{cr})$ to be nonzero. To show that the denominator $D$ is nonzero as well, note that it is a quadratic function of $\Delta$, with an absolute minimum at $\Delta^*$ such that  
\begin{eqnarray} \label{delta *}
\frac{d D}{d\Delta}(\Delta^*) = 0 \implies \Delta^* = \frac{\delta \lambda^2\theta^2 - N^2 \mu}{\lambda^2\theta^2(1+\delta^2\omega_{cr}^2)}.
\end{eqnarray}
Once  $\Delta^*$ and $\omega = \omega_{cr}$ from Equations \eqref{delta *} and \eqref{omega cr} are substituted into the denominator $D(\Delta)$ from Equation \eqref{dalpha ddelta}, we find that the minimum of $D$ with respect to $\Delta$ is positive:
\begin{eqnarray}  
D(\Delta) \geq D(\Delta^*) = \frac{(N^2 - \delta^2\lambda^2\theta^2)(\lambda^2\theta^2 - N^2\mu^2)}{\lambda^2\theta^2} = \frac{(N^2 - \delta^2\lambda^2\theta^2)^2\omega_{cr}}{\lambda^2\theta^2} >0,
\end{eqnarray}
Hence the denominator of $\frac{d\alpha}{d \Delta}(\Delta_{cr})$ is positive for any delay $\Delta$, so 
\begin{eqnarray}
\frac{d\alpha}{d \Delta}(\Delta_{cr}) \neq 0.
\end{eqnarray}
In fact, if $\delta < \frac{N}{\lambda\theta}$ then $\frac{d\alpha}{d \Delta}(\Delta_{cr}) > 0$ so the eigenvalues always cross from left to right on the complex plane. If $\delta > \frac{N}{\lambda\theta}$ then $\frac{d\alpha}{d \Delta}(\Delta_{cr}) < 0$ so the eigenvalues always cross from right to left. At each root of $\Delta_{cr}$ there is one purely imaginary pair of eigenvalues, but all other eigenvalues necessarily have a nonzero real part. Hence all roots $\Lambda_j \neq R, \bar{R}$ satisfy $\Lambda_j \neq m R, m\bar{R}$ for any integer $m$. Therefore all conditions of the infinite-dimensional version of the Hopf Theorem from Hale and Lunel \cite{hale1993} are satisfied, so a Hopf bifurcation occurs at every root $\Delta_{cr}$.
\end{proof}

In the proof of Theorem \eqref{thm: Hopf}, for $\delta < \frac{N}{\lambda \theta}$ it is shown that any pair of complex eigenvalues, which crosses the imaginary axis on the complex plane, necessarily crosses from left to right. The implication here is that once the real part of an eigenvalue becomes positive, it remains positive as the delay increases. This allows us to state the conditions for the local stability of the equilibrium.


\begin{theorem}\label{Thm: equilibrium stability}
When $\lambda\theta > N\mu$ and $\delta <\frac{N}{\lambda\theta}$, the equilibrium is locally stable for sufficiently small delay $\Delta$. When either $\lambda\theta \leq N\mu$ and $\delta <\frac{N}{\lambda\theta}$ or $\lambda\theta < N\mu$ and $\delta =\frac{N}{\lambda\theta}$, the equilibrium is locally stable for all $\Delta$.
\end{theorem}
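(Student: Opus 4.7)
The plan is to reduce each case to the spectral characterizations already established and invoke a continuity-in-$\Delta$ argument where necessary.

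For the first statement ($\lambda\theta > N\mu$ with $\delta < N/(\lambda\theta)$) the conclusion is essentially immediate from Proposition \ref{proposition: negative eigs}: there exists $\Delta_0>0$ such that every root of \eqref{char eq} lies in the open left half-plane for all $\Delta\in(0,\Delta_0)$, which by the standard linearized stability principle for neutral delay equations (Hale and Lunel \cite{hale1993}) yields local asymptotic stability of the equilibrium $q_i^*$. The second statement I would split into its two sub-cases. The sub-case $\lambda\theta < N\mu$ with $\delta = N/(\lambda\theta)$ is handled directly by Case 1 of Proposition \ref{prop: delta = N/lambda theta}, since under $\delta = N/(\lambda\theta)$ the hypothesis $\lambda\theta < N\mu$ is equivalent to $\delta\mu > 1$, and that proposition guarantees every eigenvalue has negative real part for all $\Delta$.

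The remaining sub-case, $\lambda\theta \leq N\mu$ with $\delta < N/(\lambda\theta)$, is the substantive one and where a bit more work is needed. Here I would fix $\delta$ and track the zeros of $\Phi(\cdot,\Delta)$ as $\Delta$ grows from a value in $(0,\Delta_0)$. By Proposition \ref{proposition: negative eigs} the roots begin in the open left half-plane. Because $\Phi(R,\Delta)$ is entire in $R$ with continuous dependence on $\Delta$, any migration of a root into the closed right half-plane would require it to first touch the imaginary axis. Two observations rule this out: the value $R=0$ is never a root since $\Phi(0,\Delta) = -\lambda\theta/N - \mu < 0$, and by Proposition \ref{proposition: im roots} any purely imaginary root $\pm i\omega$ with $\omega>0$ must satisfy $\omega = \omega_{cr}$ where
\begin{equation*}
\omega_{cr}^2 = \frac{\lambda^2\theta^2 - N^2\mu^2}{N^2 - \delta^2\lambda^2\theta^2}.
\end{equation*}
Under the present hypotheses the denominator is strictly positive while the numerator is nonpositive, so $\omega_{cr}^2 \leq 0$ and no real positive $\omega_{cr}$ exists. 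Thus no crossing of the imaginary axis can occur, and every eigenvalue remains in the open left half-plane for all $\Delta>0$.

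The main obstacle I anticipate is justifying that no eigenvalue can ``arrive from infinity'' as $\Delta$ varies, since the essential spectrum of a neutral-type characteristic equation is governed by the leading neutral coefficient rather than by finite crossings alone. The hypothesis $\delta < N/(\lambda\theta)$ makes $\delta\cdot\lambda\theta/N$ strictly less than one, which is precisely the classical D-stability condition and forces the vertical asymptote of any root chain to lie in the open left half-plane; this standard fact (again from \cite{hale1993}) is what closes the continuity argument and allows the stability assertion to be transferred from small $\Delta$ to all $\Delta$ in the second statement.
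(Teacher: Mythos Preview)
Your argument is correct and follows essentially the same route as the paper: invoke Proposition~\ref{proposition: negative eigs} for small $\Delta$, Proposition~\ref{prop: delta = N/lambda theta} for the boundary case $\delta = N/(\lambda\theta)$, and then use Proposition~\ref{proposition: im roots} to rule out imaginary-axis crossings when $\lambda\theta \le N\mu$ since $\omega_{cr}^2 \le 0$. Your explicit treatment of the $R=0$ case and of the neutral essential-spectrum issue (roots coming in from infinity) is in fact more careful than the paper's own proof, which leaves those points implicit.
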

\begin{proof}
If $\delta =\frac{N}{\lambda\theta} $ and $\lambda\theta < N\mu$ then by Proposition \eqref{prop: delta = N/lambda theta}, for any delay all eigenvalues of the characteristic equation have negative real parts, therefore the equilibrium is locally stable. 

If $\delta <\frac{N}{\lambda\theta} $, then by Proposition \eqref{proposition: negative eigs} there exists a sufficiently small $\Delta$ such that all eigenvalues of the characteristic equation have negative real parts. The only way for the equilibrium to become unstable is for an eigenvalue to reach the imaginary axis for some $\Delta$. For that to happen, $\omega_{cr} = \sqrt{\frac{\lambda^2\theta^2 - N^2\mu^2}{N^2 - \delta^2\lambda^2\theta^2}} \in \mathbb{R}$, $\omega_{cr} \neq 0$ must hold. In case when $\lambda\theta \leq N\mu$ and $\delta < \frac{N}{\lambda\theta}$, then either $\omega_{cr} \notin \mathbb{R}$ or $\omega_{cr} = 0$ so the eigenvalues have negative real parts for all $\Delta$. Therefore, again the eigenvalues have negative real parts for all (finite) $\Delta$. Finally,
\end{proof}


%
%
%
%
%
%
%
%
%
%
%
%
%
%
%
%
To summarize, the behavior of the queues from Equation \eqref{q_i dot} can be categorized by two cases, when $\lambda\theta < N\mu$ and $\lambda\theta > N\mu$. In each case, two different types of behavior can be observed, depending on the size of the parameter $\delta$. Hence, there can be four qualitatively different scenarios, as shown in Figure \eqref{Fig: queue stability cases 0}. In the following discussion of the two cases, we will refer to this diagram and will explain it in detail. 

  \begin{figure}[H]
	\centering 
		\includegraphics[scale=.5]{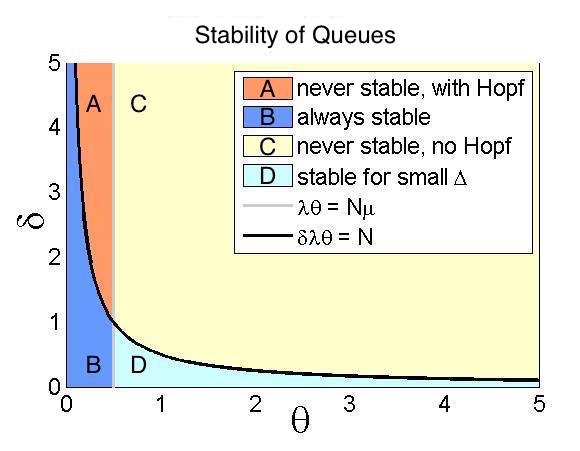}
    \caption{The four stability cases.}
    \label{Fig: queue stability cases 0}
\end{figure}

\paragraph{Case 1: $\lambda\theta < N\mu$.} This case is represented by the regions A and B that are to the left of the vertical line $\lambda \theta = N\mu$ from Figure \eqref{Fig: queue stability cases 0}. When $\delta \leq \frac{N}{\lambda\theta}$, or region B, the queues approach a stable equilibrium for any delay $\Delta$. Here all eigenvalues stay on the negative (real) side of the complex plane. As $\Delta$ increases, the complex eigenvalues approach the imaginary axis, but never reach it, as shown in Figure \eqref{Fig: eigs1}. However, when $\delta > \frac{N}{\lambda\theta}$,  which is region A of Figure \eqref{Fig: queue stability cases 0}, the queues will never be stable, and will undergo infinitely many Hopf bifurcations as the delay increases. For sufficiently small delay $\Delta$, the complex eigenvalues will be on the positive (real) side of the complex plane, and as $\Delta$ will increase, the complex pairs will cross the imaginary axis from right to left, causing Hopf bifurcations to occur as shown in Figure \eqref{Fig: eigs2}. Note, however, that queues will never gain stability because for any delay $\Delta$ there will be eigenvalues with positive real parts. 
\begin{figure}[H]
  \centering
  \begin{minipage}[b]{0.49\textwidth}
    \includegraphics[width=\textwidth]{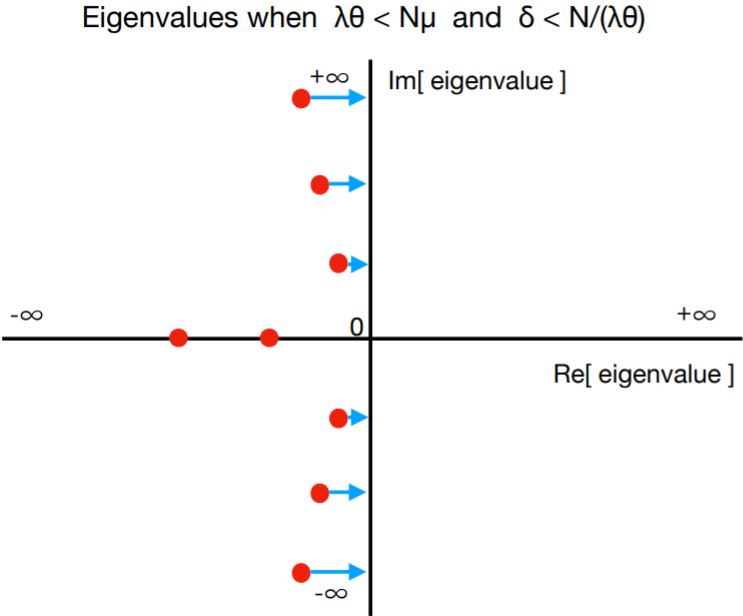}
    \caption{Eigenvalues remain on the left side of the imaginary axis for all $\Delta$. }
    \label{Fig: eigs1}
  \end{minipage}
  \hfill
  \begin{minipage}[b]{0.49\textwidth}
    \includegraphics[width=\textwidth]{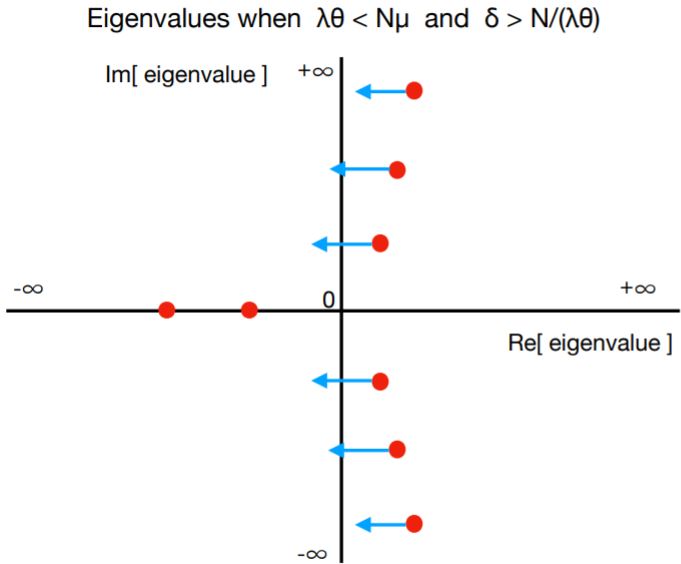}
    \caption{Eigenvalues cross the imaginary axis from right to left as $\Delta$ increases.}
    \label{Fig: eigs2}
  \end{minipage}
\end{figure}
\paragraph{Case 2: $\lambda\theta > N\mu$.} This case is represented by the regions C and D in Figure \eqref{Fig: queue stability cases 0}. When $\delta < \frac{N}{\lambda\theta}$, or region D, the queues will approach a stable equilibrium for a sufficiently small delay $\Delta$. All the eigenvalues will be on the negative (real) side of the complex plane. As the delay $\Delta$ increases, the complex pairs of eigenvalues will move towards the imaginary axis, crossing the axis eventually one by one from left to right as indicated in Figure \eqref{Fig: eigs3}. During the crossing of each pair, a Hopf bifurcation occurs. When $\delta \geq \frac{N}{\lambda\theta}$, which is region C of the diagram \eqref{Fig: queue stability cases 0}, the complex eigenvalues cannot reach the imaginary axis, and they all stay to the right side of the imaginary axis on the complex plane  as show in Figure \eqref{Fig: eigs4}, so there will never be a stable equilibrium. \\
\begin{figure}[H]
  \centering
  \begin{minipage}[b]{0.49\textwidth}
    \includegraphics[width=\textwidth]{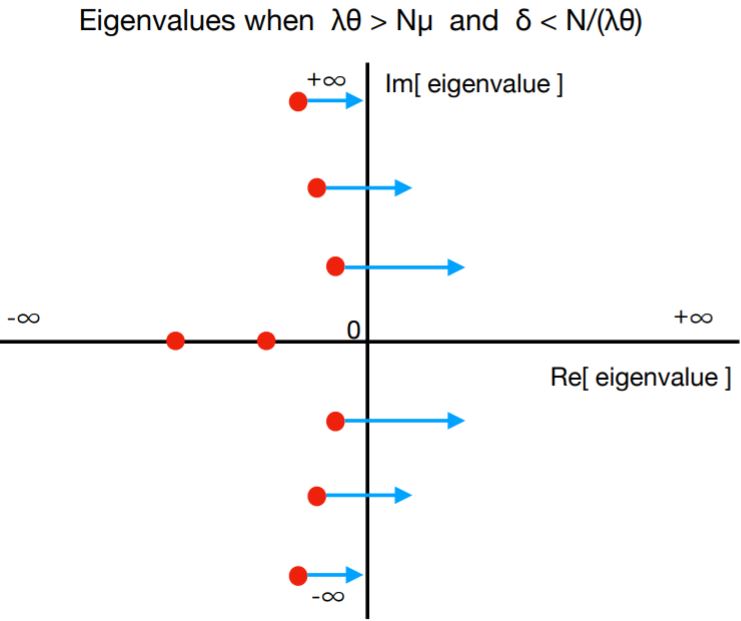}
    \caption{Eigenvalues cross the imaginary axis from left to right as $\Delta$ increases.}
    \label{Fig: eigs3}
  \end{minipage}
  \hfill
  \begin{minipage}[b]{0.49\textwidth}
    \includegraphics[width=\textwidth]{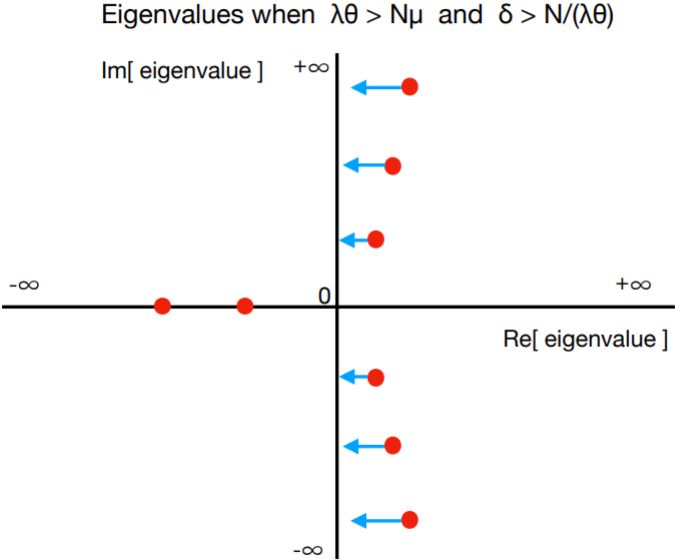}
    \caption{Eigenvalues stay on the right side of imaginary axis for all $\Delta$.}
    \label{Fig: eigs4}
  \end{minipage}
\end{figure}  

Another aspect to point out is the dependence on the MNL parameter $\theta$. When customers join the queues at random, or $\theta \to 0$, the parameters inevitably end up in region B of Figure  \eqref{Fig: queue stability cases 0}, so the queues will stable for any delay. Alternatively, if customers always join the shortest queue, or $\theta \to \infty$, then for any $\delta >0$ we inevitably end up in region C of Figure  \eqref{Fig: queue stability cases 0}, so the queues will always be unstable.




\section{Achieving Maximum Stability} \label{Sec: comparison}
In physical settings, it is often important to preserve the stability of the queues. Stability evens out the individual waiting times of the customers, minimizing the negative experience. It is therefore useful to know when providing extra information helps to postpone the point of the bifurcation, and when the extra information makes the bifurcation to happen sooner. For example, consider a numerical example from Figures \eqref{Fig: before Hopf 2} - \eqref{Fig: after Hopf}, with two queues and fixed parameters $\lambda, \theta, \mu,$ and $\Delta$. In Figures \eqref{Fig: before Hopf 2} and \eqref{Fig: before Hopf}, $\Delta <\Delta_{cr}$ so the queues converge to an equilibrium over time. However, in Figures \eqref{Fig: after Hopf 2} and \eqref{Fig: after Hopf} we have $\Delta >\Delta_{cr}$, so the queues oscillate indefinitely. Although the delay $\Delta$ is the same, the change of behavior results from tweaking the parameter $\delta$, which consequently regulates the bifurcation threshold $\Delta_{cr}$.

\begin{figure}[H]
  \centering
  \begin{minipage}[b]{0.49\textwidth}
    \includegraphics[width=\textwidth]{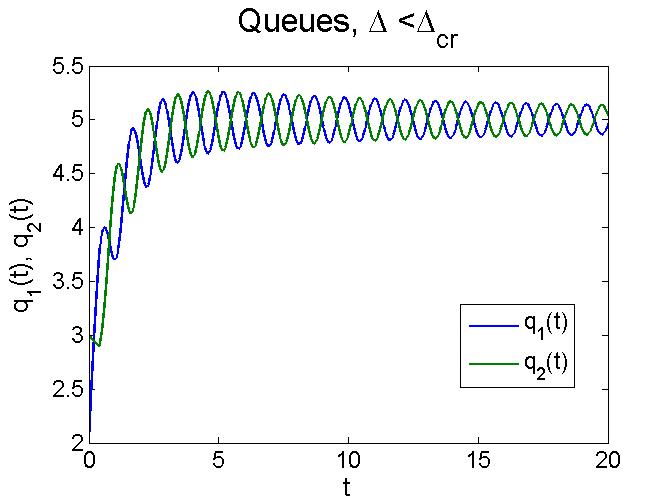}
    \caption{Queues before Hopf bifurcation;\\
    $\delta = 0.08,$ $\lambda= 10$, $\mu = 1$, $\theta = 1.$}
    \label{Fig: before Hopf 2}
  \end{minipage}
  \hfill
  \begin{minipage}[b]{0.49\textwidth}
    \includegraphics[width=\textwidth]{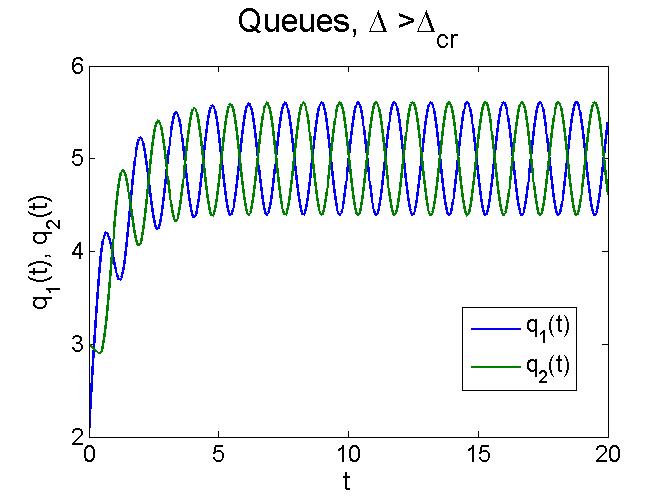}
    \caption{Queues after Hopf bifurcation; \\
    $\delta = 0,$ $\lambda= 10$, $\mu = 1$, $\theta = 1.$}
    \label{Fig: after Hopf 2}
  \end{minipage}
\end{figure}

\begin{figure}[H]
  \centering
  \begin{minipage}[b]{0.49\textwidth}
    \includegraphics[width=\textwidth]{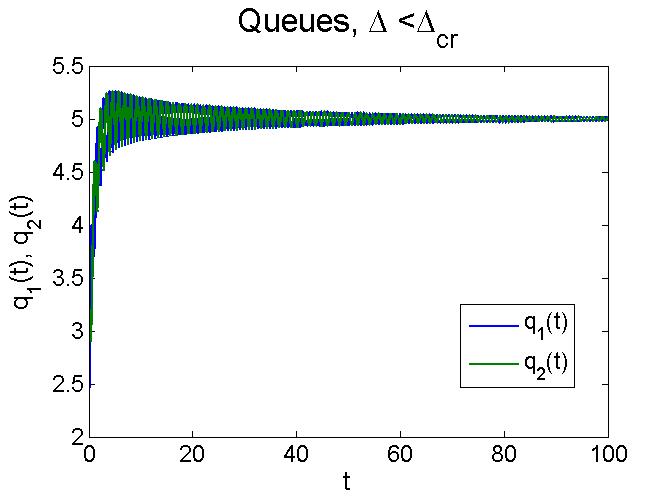}
    \caption{Queues before Hopf bifurcation; \\ $\delta = 0.08,$ $\lambda= 10$, $\mu = 1$, $\theta = 1.$ }
    \label{Fig: before Hopf}
  \end{minipage}
  \hfill
  \begin{minipage}[b]{0.49\textwidth}
    \includegraphics[width=\textwidth]{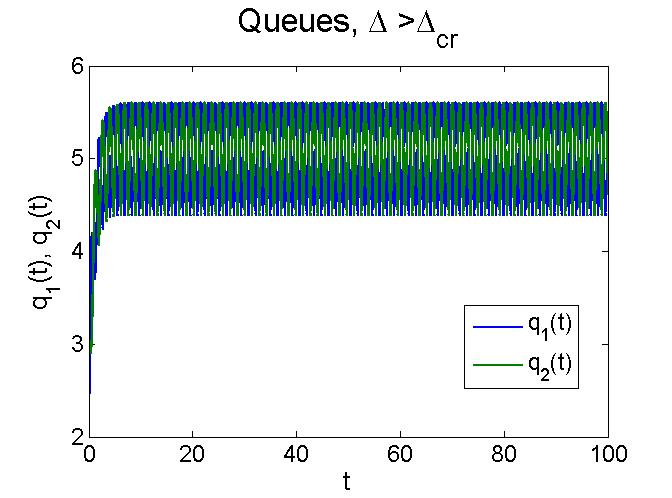}
    \caption{Queues after Hopf bifurcation;\\ $\delta = 0,$ $\lambda= 10$, $\mu = 1$, $\theta = 1.$}
    \label{Fig: after Hopf}
  \end{minipage}
\end{figure}

In this Section, we will consider the scenario $\lambda\theta > N \mu$, where the equilibrium of the queues can become unstable.  We will study how the bifurcation threshold $\Delta_{cr}$ changes depending on the weight of the velocity information $\delta$. Our next result shows that the threshold $\Delta_{cr}$ is a concave function of $\delta$.

\begin{proposition} \label{Delta cr concavity}
Suppose $\lambda\theta >N\mu$. Then the function $\Delta_{cr}(\delta)$ is concave for all $\delta \in [0 , \frac{N}{\lambda \theta})$.
\end{proposition}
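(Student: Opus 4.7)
The plan is to compute $\Delta_{cr}''(\delta)$ directly from the closed-form expression in Theorem~\ref{thm: Hopf} and exhibit it as a sum of manifestly negative pieces. Introduce the shorthand $A = \lambda\theta$, $B = N\mu$, and $c = A^2 - B^2$, which is positive by the standing hypothesis $\lambda\theta > N\mu$. Factor the bifurcation curve as $\Delta_{cr}(\delta) = h(\delta)\,k(\delta)$, where
\[
h(\delta) = \arccos(p(\delta)), \quad p(\delta) = -\frac{\delta A^2 + N^2\mu}{NA(1+\delta\mu)}, \quad k(\delta) = \frac{\sqrt{N^2 - \delta^2 A^2}}{\sqrt{c}}.
\]
A quotient-rule computation gives $p'(\delta) = -c/\bigl(NA(1+\delta\mu)^2\bigr)$. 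The crucial algebraic identity, obtained by expanding and regrouping the numerator of $N^2 A^2(1+\delta\mu)^2 - (\delta A^2 + N^2\mu)^2$, is
\[
1 - p(\delta)^2 = \frac{c\,(N^2 - \delta^2 A^2)}{N^2 A^2 (1+\delta\mu)^2}.
\]
Combining this with $h' = -p'/\sqrt{1-p^2}$ produces the unexpectedly clean form $h'(\delta) = \sqrt{c}/\bigl((1+\delta\mu)\sqrt{N^2 - \delta^2 A^2}\bigr)$, which is what makes the subsequent algebra tractable.

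Differentiating a second time yields (after a routine but careful calculation)
\[
h''(\delta) = -\frac{\sqrt{c}\,(\mu N^2 - \delta A^2 - 2\delta^2 A^2 \mu)}{(1+\delta\mu)^2 (N^2-\delta^2 A^2)^{3/2}}, \qquad k'(\delta) = -\frac{\delta A^2}{\sqrt{c}\sqrt{N^2-\delta^2 A^2}}, \qquad k''(\delta) = -\frac{A^2 N^2}{\sqrt{c}\,(N^2-\delta^2 A^2)^{3/2}}.
\]
Expanding $\Delta_{cr}'' = h''k + 2h'k' + hk''$ and placing the first two terms over the common denominator $(1+\delta\mu)^2(N^2 - \delta^2 A^2)$, the numerator collapses via the obvious cancellations:
\[
-(\mu N^2 - \delta A^2 - 2\delta^2 A^2\mu) - 2\delta A^2(1+\delta\mu) = -(\mu N^2 + \delta A^2).
\]
Therefore $h''k + 2h'k' = -(\mu N^2 + \delta A^2)/\bigl((1+\delta\mu)^2(N^2-\delta^2 A^2)\bigr)$, which is strictly negative on $[0, N/A)$. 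This pair of cancellations is the heart of the proof.

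For the remaining term $hk''$, the factor $k''(\delta) < 0$ is immediate from the formula above. It remains to check $h(\delta) > 0$: since $p(\delta)$ is monotone decreasing from $p(0) = -B/A$ to $p(N/A) = -1$, one has $p(\delta) \in (-1, -B/A] \subset (-1, 0)$ on our interval, so $h(\delta) = \arccos(p(\delta)) \in [\arccos(-B/A), \pi) \subset (\pi/2, \pi)$ is positive. Hence both $h''k + 2h'k'$ and $hk''$ are strictly negative, so their sum $\Delta_{cr}''(\delta)$ is negative and $\Delta_{cr}$ is concave. The main obstacle is algebraic rather than conceptual: one cannot sign $h''(\delta)$ in isolation---it is positive near $\delta = 0$ and negative near $\delta = N/A$---so the argument fundamentally depends on combining $h''k$ with $2h'k'$ first, and only then reading off a sign.
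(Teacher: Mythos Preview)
Your proof is correct and takes essentially the same approach as the paper: both compute $\Delta_{cr}''(\delta)$ directly and exhibit it as a sum of two negative pieces, one carrying the $\arccos$ factor and one without. Your version is more transparent about the derivation---the paper simply states the final expression $\Delta_{cr}'' = -\tfrac{1}{C_3}(C_1 + C_2\arccos(\cdot))$ with $C_1,C_2,C_3>0$, whereas you arrive at the same decomposition via the product-rule splitting $\Delta_{cr}'' = (h''k + 2h'k') + hk''$ and make explicit the cancellation that collapses the first bracket to $-(\mu N^2 + \delta A^2)/\bigl((1+\delta\mu)^2(N^2-\delta^2A^2)\bigr)$.
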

\begin{proof}
The critical delay $\Delta_{cr}$ is given by Equation \eqref{delta cr}. It is clear that the second derivative $\frac{d^2\Delta_{cr}}{d\delta^2}$ is negative for all $\delta \in [0 , \frac{N}{\lambda \theta})$:
\begin{eqnarray}
\frac{d^2\Delta_{cr}}{d\delta^2} = -\frac{1 }{C_3} \cdot \bigg(C_1+C_2 \arccos \bigg(- \frac{\delta \lambda^2\theta^2 + N^2 \mu}{N \lambda\theta(1 + N \delta  \mu)}\bigg) \bigg), \quad \text{where} \\
C_1 = (N^2 - \delta^2\lambda^2\theta^2)(\lambda^2\theta^2 - N^2 \mu^2)(\delta \lambda^2\theta^2 + N^2\mu) >0\\
C_2 = N^2\lambda^2\theta^2(1+\delta\mu)^2\sqrt{(N^2 - \delta^2\lambda^2\theta^2)(\lambda^2\theta^2 - N^2 \mu^2)} >0 \\
C_3 = N\lambda\theta (N^2 - \delta^2\lambda^2\theta^2)^{\frac{3}{2}} \sqrt{\lambda^2\theta^2 - N^2\mu^2}(1+\delta\mu)^3 \sqrt{1 - \frac{(\delta\lambda^2\theta^2 + N^2\mu)^2}{(N\lambda\theta + N\delta\lambda\theta\mu)^2}}>0\\
\arccos \bigg(- \frac{\delta \lambda^2\theta^2 + N^2 \mu}{N \lambda\theta(1 +  \delta \mu)}\bigg) = \Delta_{cr}\cdot \sqrt{\frac{\lambda^2\theta^2 - N^2\mu^2}{N^2 - \delta^2\lambda^2\theta^2}} > 0. 
\end{eqnarray}
\end{proof}

Proposition \eqref{Delta cr concavity} allows us to show that there exists a specific size of the weight $\delta$ that makes the queueing system optimally stable. We call this size of the weight $\delta_{max}$, and it is such that $\delta=\delta_{max}$ maximizes the threshold $\Delta_{cr}$. In Proposition \eqref{proposition: delta max}, we give an equation that determines $\delta_{max}$, and provide closed-form expressions for an upper and a lower bound of $\delta_{max}$.
\begin{proposition} \label{proposition: delta max}
Suppose $\lambda \theta > N \mu$. There exists a unique $\delta_{max} \geq 0$ that maximizes a given root $\Delta_{cr}$ for fixed parameters $\lambda,\mu, N, \theta$. It is given by solution of
\begin{eqnarray}
\frac{\sqrt{N^2 - \delta_{max}^2\lambda^2\theta^2}}{1+\delta_{max}\mu} = \frac{\delta_{max}\lambda^2\theta^2}{\sqrt{\lambda^2\theta^2 - N^2\mu^2}}\cdot \arccos \bigg(- \frac{\delta_{max} \lambda^2\theta^2 + N^2 \mu}{N \lambda\theta(1 + \delta_{max} \mu)}\bigg). \label{delta max eqn}
\end{eqnarray}
Furthermore, $\delta_{max}$ is bounded by $\delta_1 < \delta_{max} < \delta_2$, where
\begin{eqnarray}
\delta_1 &=& \frac{-\big(\Delta_0 + \frac{N}{\lambda\theta}\big) \lambda\theta + \sqrt{\lambda^2\theta^2\big(\Delta_0 + \frac{N}{\lambda\theta}\big)^2 + 4N^2\big(\Delta_0 + \frac{N}{\lambda\theta}\big)\mu + 4N^2}}{2\lambda\theta\Big(1 + \big(\Delta_0 + \frac{N}{\lambda\theta}\big)\mu\Big)} \label{delta_1}\\
\delta_{2} &=& \frac{-\Delta_0\lambda\theta + \sqrt{\lambda^2\theta^2\Delta_0^2 + 4N^2\Delta_0\mu + 4N^2}}{2\lambda\theta(1 + \Delta_0\mu)} \label{delta_2}\\
\Delta_0 &=& \arccos \bigg(- \frac{N \mu}{ \lambda\theta}\bigg) \cdot \sqrt{\frac{N^2 }{\lambda^2\theta^2 - N^2\mu^2}}. 
\end{eqnarray}
\end{proposition}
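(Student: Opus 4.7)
The plan is to exploit the strict concavity of $\Delta_{cr}(\delta)$ established in Proposition \ref{Delta cr concavity}, together with a sharp slope identity at $\delta = 0$, to derive both the defining equation for $\delta_{max}$ and the explicit bracketing bounds $\delta_1 < \delta_{max} < \delta_2$. First I would pin down the boundary behaviour of $\Delta_{cr}$ on $[0, N/(\lambda\theta))$: from \eqref{delta cr} we have $\Delta_{cr}(0) = \Delta_0 > 0$, while as $\delta \to N/(\lambda\theta)^{-}$ the factor $\sqrt{N^2 - \delta^2\lambda^2\theta^2}$ vanishes and the arccos factor stays bounded, so $\Delta_{cr}(\delta) \to 0$. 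Writing $\Delta_{cr} = A(\delta)B(\delta)$ with $A(\delta) = \arccos(h(\delta))$, $h(\delta) = -(\delta\lambda^2\theta^2 + N^2\mu)/(N\lambda\theta(1 + \delta\mu))$, and $B(\delta) = \sqrt{(N^2 - \delta^2\lambda^2\theta^2)/(\lambda^2\theta^2 - N^2\mu^2)}$, and applying the chain rule, I would establish the clean slope identity $\Delta_{cr}'(0) = 1$; the cancellation rests on $1 - h(0)^2 = (\lambda^2\theta^2 - N^2\mu^2)/(\lambda\theta)^2$ combined with $B'(0) = 0$. Because $\Delta_{cr}$ is strictly concave with $\Delta_{cr}'(0) > 0$ and vanishes at the right endpoint, the maximizer $\delta_{max}$ must lie strictly inside $(0, N/(\lambda\theta))$ and, by strict concavity, is unique.

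Next I would derive the fixed-point equation \eqref{delta max eqn} by imposing the first-order condition $A'(\delta_{max})B(\delta_{max}) + A(\delta_{max})B'(\delta_{max}) = 0$. The key simplification is $\sqrt{1 - h(\delta)^2} = \sqrt{(N^2 - \delta^2\lambda^2\theta^2)(\lambda^2\theta^2 - N^2\mu^2)}/(N\lambda\theta(1 + \delta\mu))$, which, together with the explicit forms of $h'$ and $B'$, collapses the first-order condition to \eqref{delta max eqn} after algebraic rearrangement. For the bounds I would recast \eqref{delta max eqn} in the equivalent form $\Delta_{cr}(\delta_{max}) = (N^2 - \delta_{max}^2\lambda^2\theta^2)/(\delta_{max}\lambda^2\theta^2(1 + \delta_{max}\mu))$ by multiplying through by $\sqrt{(\lambda^2\theta^2 - N^2\mu^2)/(N^2 - \delta_{max}^2\lambda^2\theta^2)}$. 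The lower estimate $\Delta_{cr}(\delta_{max}) \geq \Delta_0$ is immediate since $\delta_{max}$ maximizes $\Delta_{cr}$, and the tangent-line inequality from strict concavity yields $\Delta_{cr}(\delta_{max}) \leq \Delta_0 + \Delta_{cr}'(0)\,\delta_{max} = \Delta_0 + \delta_{max} < \Delta_0 + N/(\lambda\theta)$. Substituting each estimate into the rewritten identity gives a quadratic inequality in $\delta_{max}$ with positive leading coefficient whose unique positive root is exactly $\delta_2$ (lower estimate) or $\delta_1$ (upper estimate), matching \eqref{delta_1} and \eqref{delta_2} verbatim.

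The main technical obstacle will be the slope identity $\Delta_{cr}'(0) = 1$: the value is strikingly parameter-free, but arriving at it requires care in differentiating the arccos of a rational function and in handling the simplification of $1 - h(\delta)^2$ above. Once that identity is secured, the existence and uniqueness of $\delta_{max}$, the sharp upper bound on $\Delta_{cr}(\delta_{max})$ that drives $\delta_1$, and the identification of $\delta_1, \delta_2$ as the positive roots of the resulting quadratics all follow by routine computation.
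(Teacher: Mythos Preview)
Your proposal is correct and essentially coincides with the paper's proof. The paper obtains the key derivative identity $\Delta_{cr}'(\delta) = \tfrac{1}{1+\delta\mu} - \tfrac{\delta\lambda^2\theta^2\,\Delta_{cr}(\delta)}{N^2-\delta^2\lambda^2\theta^2}$ by implicit differentiation of the Hopf condition, whereas you reach the same expression by differentiating the explicit formula $\Delta_{cr}=A(\delta)B(\delta)$ via the product rule (indeed your computation gives $A'(\delta)B(\delta)=\tfrac{1}{1+\delta\mu}$ and $A(\delta)B'(\delta)=-\tfrac{\delta\lambda^2\theta^2\,\Delta_{cr}}{N^2-\delta^2\lambda^2\theta^2}$); thereafter both arguments use concavity, $\Delta_{cr}'(0)=1$, the first-order condition rewritten as $\Delta_{cr}(\delta_{max}) = \tfrac{N^2-\delta_{max}^2\lambda^2\theta^2}{\delta_{max}\lambda^2\theta^2(1+\delta_{max}\mu)}$, and the two-sided estimate $\Delta_0 < \Delta_{cr}(\delta_{max}) < \Delta_0 + \tfrac{N}{\lambda\theta}$ to extract $\delta_1$ and $\delta_2$ as the positive roots of the corresponding quadratics.
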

\begin{proof}
We can treat $\Delta_{cr}$ as a function of $\delta$. The implicit differentiation of \eqref{Hopf condition} gives the rate with which $\Delta_{cr}$ changes: 
\begin{eqnarray}
\frac{d}{d\delta}\Delta_{cr}(\delta) = \frac{N^2 - \delta \lambda^2\theta^2\Big(\delta + \Delta_{cr}(\delta) + \delta\mu\Delta_{cr}(\delta)\Big)}{(N^2 - \delta^2\lambda^2\theta^2)(1+ \delta \mu)} = \frac{1}{1+\delta\mu}-  \frac{\delta\lambda^2\theta^2\Delta_{cr}(\delta)}{N^2 - \delta^2\lambda^2\theta^2}  \label{Delta cr partial 1}\\
=  \frac{1}{1+\delta\mu} -\frac{\delta\lambda^2\theta^2}{N^2 - \delta^2\lambda^2\theta^2}\cdot \arccos \bigg(- \frac{\delta \lambda^2\theta^2 + N^2 \mu}{N \lambda\theta(1 + \delta \mu)}\bigg) \cdot \sqrt{\frac{N^2 - \delta^2\lambda^2\theta^2}{\lambda^2\theta^2 - N^2\mu^2}}. \label{Delta cr partial 2}
\end{eqnarray}

By Proposition \eqref{Delta cr concavity}, $\Delta_{cr}(\delta)$ is concave on the interval $[0,\frac{N}{\lambda\theta})$. Further, it can be shown that $\frac{d}{d\delta}\Delta_{cr}(0) = 1 >0$ and $\lim_{\delta \to \frac{N}{\lambda\theta}}\frac{d}{d\delta}\Delta_{cr}(\delta) = - \infty <0$, so there is a point $\delta_{max}$ where $\frac{d}{d\delta}\Delta_{cr}(\delta_{max}) = 0$. Therefore $\Delta_{cr}(\delta)$ reaches its absolute maximum at $\delta_{max} \in (0,\frac{N}{\lambda\theta})$. For intuition, we plot $\frac{d}{d\delta}\Delta_{cr}(\delta)$ in Figure \eqref{Fig: Delta_cr derivative}. 

\begin{figure}[H]
  \centering
    \includegraphics[scale=.65]{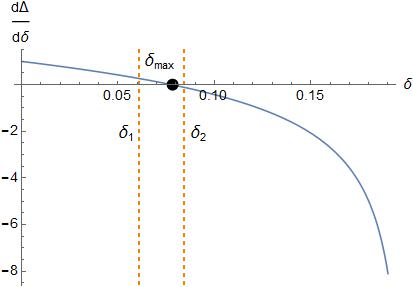}
    \caption{$\delta_{max}$ and its bounds $\delta_1<\delta_{max}< \delta_2$. }
    \label{Fig: Delta_cr derivative}
 \end{figure}
The value $\delta_{max}$ can be found numerically by solving $\frac{d}{d\delta}\Delta_{cr}(\delta_{max}) = 0$ from Equation \eqref{Delta cr partial 2}, alternatively written as \eqref{delta max eqn}. It is left to find closed-form expressions for the bounds on $\delta_{max}$. By Equation  \eqref{Delta cr partial 1}, we can express $\delta_{max}$ as
\begin{eqnarray}
\frac{d}{d\delta}\Delta_{cr}(\delta_{max}) =\frac{1}{1+\delta_{max}\mu}-  \frac{\delta_{max}\lambda^2\theta^2\Delta_{cr}(\delta_{max})}{N^2 - \delta_{max}^2\lambda^2\theta^2} = 0, \label{bound 1} \\
\frac{1}{1+\delta_{max}\mu}-  \frac{\delta_{max}\lambda^2\theta^2\Delta_0}{N^2 - \delta_{max}^2\lambda^2\theta^2}>0, \label{ub 1}
\end{eqnarray}
where $\Delta_0 = \Delta_{cr}(0) <\Delta_{cr}(\delta_{max})$. When solved for $\delta_{max}$, the inequality \eqref{ub 1} produces an upper bound condition $\delta_{max} < \delta_2$ given by Equation \eqref{delta_2}.

To find the lower bound, we note that $\frac{d}{d\delta}\Delta_{cr}(\delta)$ is monotonically decreasing. Thus, $\frac{d}{d\delta}\Delta_{cr}(\delta) < \frac{d}{d\delta}\Delta_{cr}(0) =1$ for all $\delta \in (0, \frac{N}{\lambda\theta})$, and $\Delta_{cr}(\delta) \leq \delta +\Delta_{cr}(0) < \frac{N}{\lambda\theta} +\Delta_0$. Therefore by Equation \eqref{bound 1}, we get
\begin{eqnarray}
\frac{1}{1+\delta_{max}\mu}-  \frac{\delta_{max}\lambda^2\theta^2\big(\Delta_0 + \frac{N}{\lambda\theta}\big) }{N^2 - \delta_{max}^2\lambda^2\theta^2}<0,
\end{eqnarray}
which produces the bound $\delta_{max} > \delta_1$ from Equation \eqref{delta_1} when solved for $\delta_{max}$.
\end{proof}
Figures \eqref{Fig: Delta cr lambda delta} - \eqref{Fig: Delta cr lambda delta big} show $\Delta_{cr}$ as a function of $\lambda$ and $\delta$. For each arrival rate $\lambda$, the maximum $\Delta_{cr}$ is attained for some $\delta$ between the two curves $\delta_1$ and $\delta_2$. Similarly, Figures \eqref{Fig: Delta cr mu delta} and \eqref{Fig: Delta cr mu delta big} show $\Delta_{cr}$ as a function of $\mu$ and $\delta$ with the two curves $\delta_1$ and $\delta_2$. As seen in the Figures \eqref{Fig: Delta cr lambda delta} - \eqref{Fig: Delta cr mu delta big}, the bounds on $\delta_{max}$ are tight.

\begin{figure}[H]
  \centering
  \begin{minipage}[b]{0.49\textwidth}
    \includegraphics[width=\textwidth]{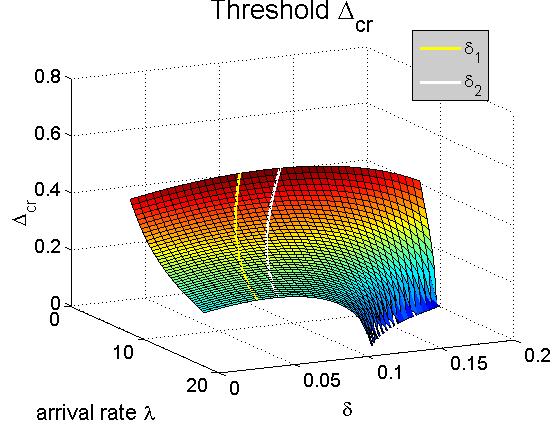}
    \caption{For each $\lambda$, the maximum $\Delta_{cr}$ is achieved when $\delta \in (\delta_1,\delta_2)$; $\mu =1,$ $\theta = 1.$}
    \label{Fig: Delta cr lambda delta}
  \end{minipage}
  \hfill
  \begin{minipage}[b]{0.49\textwidth}
    \includegraphics[width=\textwidth]{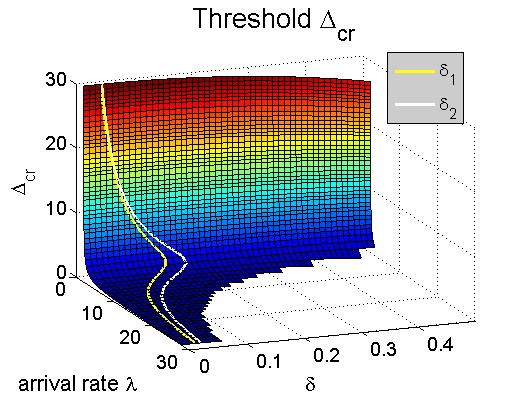}
    \caption{For each $\lambda$, the maximum $\Delta_{cr}$ is achieved when $\delta \in (\delta_1,\delta_2)$; $\mu =1,$ $\theta = 1.$}
    \label{Fig: Delta cr lambda delta big}
  \end{minipage}
\end{figure}

\begin{figure}[H]
  \centering
  \begin{minipage}[b]{0.49\textwidth}
    \includegraphics[width=\textwidth]{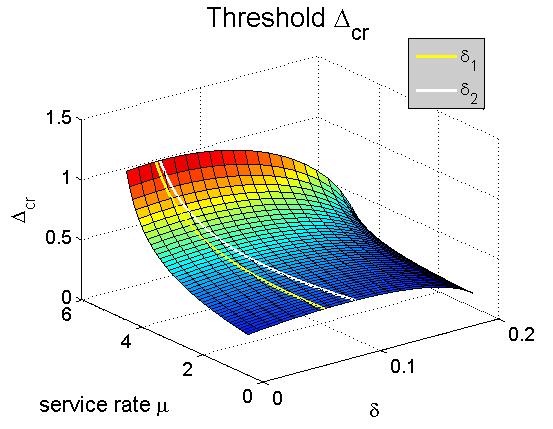}
    \caption{For each $\mu$, the maximum $\Delta_{cr}$ is achieved when $\delta \in (\delta_1,\delta_2)$; $\lambda =10,$ $\theta = 1.$}
    \label{Fig: Delta cr mu delta}
  \end{minipage}
  \hfill
  \begin{minipage}[b]{0.49\textwidth}
    \includegraphics[width=\textwidth]{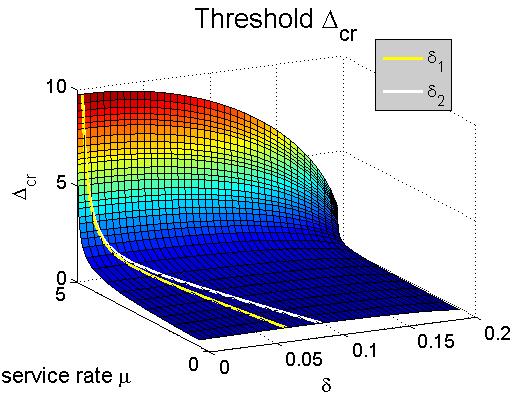}
    \caption{For each $\mu$, the maximum $\Delta_{cr}$ is achieved when $\delta \in (\delta_1,\delta_2)$; $\lambda =10,$ $\theta = 1.$}
    \label{Fig: Delta cr mu delta big}
  \end{minipage}
\end{figure}

Besides knowing at which value $\delta$ the maximal bifurcation threshold $\Delta_{cr}$ may occur, it is also important to know how large that threshold actually is. In the next result, we develop bounds for the maximum $\Delta_{cr}$ that can be attained for fixed parameters $\lambda, N, \mu,$ and $\theta$.

\begin{proposition} \label{proposition: Delta_cr bounds}
The maximum value of a root $\Delta_{cr}$ for fixed parameters $\lambda, \mu$, and $N$, is attained at $\delta_{max}$ and is bounded by $\Delta_{1}< \Delta_{cr}(\delta_{max}) < \Delta_2$, where 
\begin{eqnarray}
\Delta_1 = \max[\Delta_{cr}(\delta_1), \Delta_{cr}(\delta_2)] , \quad \Delta_2 = \min[\Delta_{2a}, \Delta_{2b}], \\
\Delta_{2a} = \Delta_{cr}(\delta_1) + (\delta_2 - \delta_1)\cdot\frac{d}{d\delta}\Delta_{cr}(\delta_1), \quad  \Delta_{2b} = \Delta_{cr}(\delta_2) - (\delta_2 - \delta_1)\cdot\frac{d}{d\delta}\Delta_{cr}(\delta_2).
\end{eqnarray}
\end{proposition}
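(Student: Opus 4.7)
The plan is to exploit the concavity of $\Delta_{cr}(\delta)$ established in Proposition~\ref{Delta cr concavity}, together with the localization $\delta_1 < \delta_{max} < \delta_2$ from Proposition~\ref{proposition: delta max}. The key structural fact is that, by concavity, $\frac{d}{d\delta}\Delta_{cr}(\delta)$ is strictly decreasing on $[0, \tfrac{N}{\lambda\theta})$, and since $\delta_{max}$ is the unique critical point we have $\frac{d}{d\delta}\Delta_{cr}(\delta_1) > 0$ and $\frac{d}{d\delta}\Delta_{cr}(\delta_2) < 0$. These sign facts will drive the estimates in the right direction.

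For the lower bound, the argument is immediate: because $\delta_{max}$ is the global maximizer of $\Delta_{cr}$ on the interval containing $\delta_1$ and $\delta_2$, I just evaluate $\Delta_{cr}$ at both endpoints to obtain $\Delta_{cr}(\delta_{max}) \geq \Delta_{cr}(\delta_j)$ for $j=1,2$, and then take the larger of the two, which is $\Delta_1$.

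For the upper bound, I would use the standard tangent-line bound for concave functions: for any $\delta$ in the domain, $\Delta_{cr}(\delta) \leq \Delta_{cr}(\delta_0) + (\delta - \delta_0)\,\tfrac{d}{d\delta}\Delta_{cr}(\delta_0)$. Applying this at $\delta_0 = \delta_1$ and evaluating at $\delta_{max}$ gives
\begin{equation*}
\Delta_{cr}(\delta_{max}) \leq \Delta_{cr}(\delta_1) + (\delta_{max} - \delta_1)\,\tfrac{d}{d\delta}\Delta_{cr}(\delta_1).
\end{equation*}
Since $\tfrac{d}{d\delta}\Delta_{cr}(\delta_1) > 0$ and $\delta_{max} < \delta_2$, I can enlarge the right-hand side by replacing $\delta_{max}$ with $\delta_2$, yielding $\Delta_{cr}(\delta_{max}) \leq \Delta_{2a}$. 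Analogously, applying the tangent bound at $\delta_0 = \delta_2$ gives $\Delta_{cr}(\delta_{max}) \leq \Delta_{cr}(\delta_2) + (\delta_{max} - \delta_2)\,\tfrac{d}{d\delta}\Delta_{cr}(\delta_2)$; here the coefficient $\tfrac{d}{d\delta}\Delta_{cr}(\delta_2)$ is negative and $\delta_{max} - \delta_2$ is negative, so their product is positive, and since $|\delta_{max} - \delta_2| < \delta_2 - \delta_1$ the same monotonicity argument enlarges the bound to $\Delta_{2b}$. Taking the smaller of $\Delta_{2a}$ and $\Delta_{2b}$ gives $\Delta_2$.

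The only mild obstacle is bookkeeping of signs: one must be careful that replacing $\delta_{max}$ with $\delta_2$ in the first tangent bound truly weakens the bound (it does, because the slope is positive), and similarly that replacing $\delta_{max}$ with $\delta_1$ in the second bound weakens it (it does, because the slope is negative and the $\delta$-difference increases in magnitude). Both follow directly from the fact that $\tfrac{d}{d\delta}\Delta_{cr}$ is decreasing and vanishes exactly at $\delta_{max}$, which I would invoke from Proposition~\ref{Delta cr concavity} and the derivative formula~\eqref{Delta cr partial 2}. No new analytic input beyond concavity and the already-established bracketing of $\delta_{max}$ is needed.
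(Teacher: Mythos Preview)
Your proposal is correct and follows essentially the same approach as the paper: the lower bound is immediate from $\delta_{max}$ being the maximizer, and the upper bound combines the tangent-line inequality for concave functions with the bracketing $\delta_1<\delta_{max}<\delta_2$ and the signs of $\tfrac{d}{d\delta}\Delta_{cr}$ at $\delta_1,\delta_2$. The only cosmetic difference is that the paper writes the tangent-line step as an explicit integral bound $\Delta_{cr}(\delta_{max})=\Delta_{cr}(\delta_1)+\int_{\delta_1}^{\delta_{max}}\tfrac{d}{d\delta}\Delta_{cr}(\delta)\,d\delta$ with the integrand replaced by the constant $\tfrac{d}{d\delta}\Delta_{cr}(\delta_1)$, which is exactly your concavity tangent bound in integral form.
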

\begin{proof}

By Proposition \eqref{proposition: delta max}, $\Delta_{cr}(\delta)$ attains its maximum at $\delta = \delta_{max}$. Hence the lower bound $\Delta_{1}  < \Delta_{cr}(\delta_{max})$ trivially follows, since $\delta_{max} \neq \delta_1,\delta_2$. To find an upper bound, note that $\frac{d}{d\delta}\Delta_{cr}(\delta)$ is a monotonically decreasing function, so $\frac{d}{d\delta}\Delta_{cr}(\delta_1) > \frac{d}{d\delta}\Delta_{cr}(\delta)$ for all $\delta>\delta_1$, and also that $\frac{d}{d\delta}\Delta_{cr}(\delta_1) >0$ since $\Delta_{cr}(\delta)$ increases while $\delta < \delta_{max}$. Hence 
\begin{eqnarray}
\Delta_{cr}(\delta_{max}) = \Delta_{cr}(\delta_{1}) + \int_{\delta_1}^{\delta_{max}} \frac{d}{d\delta}\Delta_{cr}(\delta) d\delta < \Delta_{cr}(\delta_{1}) + \int_{\delta_1}^{\delta_{max}} \frac{d}{d\delta}\Delta_{cr}(\delta_1) d\delta  \\
= \Delta_{cr}(\delta_{1}) + (\delta_{max} - \delta_1) \frac{d}{d\delta}\Delta_{cr}(\delta_1) <\Delta_{cr}(\delta_{1}) + (\delta_2 - \delta_1) \frac{d}{d\delta}\Delta_{cr}(\delta_1) = \Delta_{2a}.
\end{eqnarray}
In addition, it is known that $\frac{d}{d\delta}\Delta_{cr}(\delta)<0$ when $\delta>\delta_{max}$, so 
\begin{eqnarray}
\Delta_{cr}(\delta_{max}) = \Delta_{cr}(\delta_{2}) - \int_{\delta_{max}}^{\delta_{2}} \frac{d}{d\delta}\Delta_{cr}(\delta) d\delta < \Delta_{cr}(\delta_{2}) - \int_{\delta_{max}}^{\delta_{2}} \frac{d}{d\delta}\Delta_{cr}(\delta_2) d\delta \\
= \Delta_{cr}(\delta_{2}) - (\delta_2 - \delta_{max}) \frac{d}{d\delta}\Delta_{cr}(\delta_2) 
< \Delta_{cr}(\delta_{2}) - (\delta_2 - \delta_{1}) \frac{d}{d\delta}\Delta_{cr}(\delta_2)  = \Delta_{2b}.
\end{eqnarray}
Therefore $\Delta_{cr}(\delta_{max}) < \min[\Delta_{2a},\Delta_{2b}] = \Delta_2$, as desired.
\end{proof}

Figure \eqref{Fig: Delta_cr} illustrates $\Delta_{cr}(\delta) - \Delta_{cr}(0)$ as a function of $\delta$, with the maximum attained at $\delta_{max}$ and the bounds on the maximum given by $\Delta_1$ and $\Delta_2$. Further, it is evident from Figure \eqref{Fig: Delta_cr} that there is a threshold value, which we call $\delta_{cap}$, that places a cap on the potential utility of the velocity information. When $\delta$ is less than $\delta_{cap}$, the queueing system becomes more stable from the velocity information because $\Delta_{cr}(\delta) > \Delta_{cr}(0)$. However, when $\delta$ exceeds $\delta_{cap}$, the queues become more unstable in the sense that $\Delta_{cr}(\delta) < \Delta_{cr}(0)$. The result below provides an equation for $\delta_{cap}$.

 \begin{figure}[H]
   \centering
    \includegraphics[scale=.75]{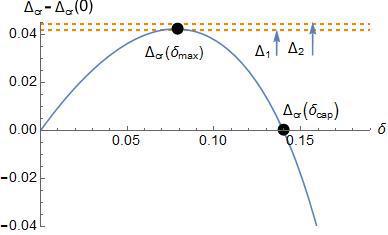}
    \caption{$\delta_{max}$ maximizes $\Delta_{cr}$.}
    \label{Fig: Delta_cr}
\end{figure}

\begin{proposition} \label{proposition: delta cap} Suppose $\lambda \theta > N \mu$. There exists a unique $\delta_{cap} > 0$ such that $\Delta_{cr}(\delta) > \Delta_{cr}(0)$ for all $\delta < \delta_{cap}$, and $\Delta_{cr}(\delta) < \Delta_{cr}(0)$ for all $\delta > \delta_{cap}$. It is given by the solution to
\begin{eqnarray}
\arccos \bigg(- \frac{ N \mu}{\lambda\theta}\bigg) \sqrt{\frac{N^2 }{\lambda^2\theta^2 - N^2\mu^2}} = \arccos \bigg(- \frac{\delta_{cap} \lambda^2\theta^2 + N^2 \mu}{N \lambda\theta(1 + \delta_{cap} \mu)}\bigg) \sqrt{\frac{N^2 - \delta_{cap}^2\lambda^2\theta^2}{\lambda^2\theta^2 - N^2\mu^2}}. \label{delta_cap}
\end{eqnarray}
\end{proposition}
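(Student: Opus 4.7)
The plan is to view equation \eqref{delta_cap} as the level-set equation $\Delta_{cr}(\delta_{cap}) = \Delta_{cr}(0)$ (the left-hand side is $\Delta_0 = \Delta_{cr}(0)$ from Proposition \eqref{proposition: delta max}, while the right-hand side is $\Delta_{cr}(\delta_{cap})$ as given by Equation \eqref{delta cr}) and to exploit the concavity of $\Delta_{cr}(\delta)$ on $[0,\tfrac{N}{\lambda\theta})$ established in Proposition \eqref{Delta cr concavity}. Define $g(\delta) := \Delta_{cr}(\delta) - \Delta_{cr}(0)$, a concave $C^1$ function on $[0,\tfrac{N}{\lambda\theta})$ with $g(0) = 0$ and $g'(0) = 1 > 0$ (as computed in Proposition \eqref{proposition: delta max}), so that $g(\delta) > 0$ for all sufficiently small $\delta > 0$.

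Next I would pin down the behavior of $\Delta_{cr}(\delta)$ at the right endpoint of its domain. Letting $\delta \to \bigl(\tfrac{N}{\lambda\theta}\bigr)^-$ in the formula \eqref{delta cr}, the square-root factor $\sqrt{\frac{N^2 - \delta^2\lambda^2\theta^2}{\lambda^2\theta^2 - N^2\mu^2}}$ tends to $0$, while the argument of the arccosine evaluates to exactly $-1$ at $\delta = \tfrac{N}{\lambda\theta}$ (the numerator and denominator both reduce to $N\lambda\theta + N^2\mu$), so arccos contributes the bounded factor $\pi$. Hence $\lim_{\delta\to(N/\lambda\theta)^-}\Delta_{cr}(\delta) = 0$, which gives $g\bigl(\tfrac{N}{\lambda\theta}^-\bigr) = -\Delta_0 < 0$ since $\Delta_0 > 0$ whenever $\lambda\theta > N\mu$.

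Combining these facts with Proposition \eqref{proposition: delta max}, $g$ strictly increases from $g(0) = 0$ to its unique maximum $g(\delta_{max}) > 0$, and then strictly decreases to a negative limit. Strict monotonicity on the decreasing branch follows from strict concavity together with $g'(\delta_{max}) = 0$: for $\delta > \delta_{max}$ the derivative $g'(\delta) < 0$. Applying the Intermediate Value Theorem to $g$ on $[\delta_{max}, \tfrac{N}{\lambda\theta})$ produces at least one zero $\delta_{cap} > \delta_{max}$; strict monotonicity on that branch forces the zero to be unique there, and uniqueness on $[0,\delta_{max}]$ follows because $g > 0$ on $(0,\delta_{max}]$. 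Finally, the sign statement drops out of the shape: for $\delta \in (0,\delta_{cap})$ either $\delta \le \delta_{max}$ (so $g(\delta) \ge g(\delta_{max}) > 0$ once $\delta>0$, more precisely $g>0$ by strict increase) or $\delta_{max} < \delta < \delta_{cap}$ (so $g(\delta) > g(\delta_{cap}) = 0$ by strict decrease on that branch); for $\delta > \delta_{cap}$ strict decrease past the zero gives $g(\delta) < 0$.

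The main obstacle in this argument is the endpoint analysis showing $\Delta_{cr}(\delta) \to 0$ as $\delta \to \tfrac{N}{\lambda\theta}^-$; everything else is almost a free corollary of Propositions \eqref{Delta cr concavity} and \eqref{proposition: delta max}. That endpoint limit requires a careful simultaneous evaluation of the $\arccos$ factor (which approaches $\pi$) and the square-root factor (which vanishes), and one must verify that the arccos stays bounded so that the product goes to $0$ rather than to an indeterminate form. Verifying that the argument of arccos is exactly $-1$ at $\delta = \tfrac{N}{\lambda\theta}$ is a short algebraic simplification, and this then closes the proof.
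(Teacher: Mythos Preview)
Your proposal is correct and follows essentially the same approach as the paper: both arguments use the increasing-then-decreasing shape of $\Delta_{cr}(\delta)$ (from Propositions \ref{Delta cr concavity} and \ref{proposition: delta max}), the endpoint limit $\Delta_{cr}(\delta)\to 0$ as $\delta\to(N/\lambda\theta)^-$, and the Intermediate Value Theorem on the decreasing branch to locate the unique $\delta_{cap}$ solving $\Delta_{cr}(\delta_{cap})=\Delta_{cr}(0)$. If anything, your treatment of the endpoint limit---explicitly checking that the arccos argument equals $-1$ so the factor is bounded while the square-root factor vanishes---is more careful than the paper's, which simply asserts the limit.
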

\begin{proof}
As previously shown, $\Delta_{cr}(\delta)$ is monotonically increasing on $\delta \in [0, \delta_{max})$ and monotonically decreasing on $\delta \in (\delta_{max}, \frac{N}{\lambda \theta})$. Further, $\lim_{\delta \to \frac{N}{\lambda \theta}}\Delta_{cr}(\delta) = 0<\Delta_{cr}(0)$ since $\Delta_{cr}(0)>0$ by assumption, so there exists exactly one point $\delta_{cap}$ on the interval $(\delta_{max}, \frac{N}{\lambda \theta})$ where $\Delta_{cr}(\delta_{cap}) = \Delta_{cr}(0)$, and it also follows that $\Delta_{cr}(\delta_{cap}) > \Delta_{cr}(0)$ for all $\delta<\delta_{cap}$ and $\Delta_{cr}(\delta_{cap}) < \Delta_{cr}(0)$ for all $\delta>\delta_{cap}$. By substituting the expression for $\Delta_{cr}$ from \eqref{delta cr} into  $\Delta_{cr}(0) - \Delta_{cr}(\delta_{cap}) = 0 $ we get Equation \eqref{delta_cap}.
\end{proof}

To summarize, when $\lambda\theta >N\mu$, the queues are stable when the delay is less than $\Delta_{cr}$. We can therefore provide the most stability for the queues by choosing $\delta$ that maximizes $\Delta_{cr}$, i.e. $\delta_{max}$.  Proposition \eqref{proposition: delta max} proves the existence of $\delta_{max}$, gives an equation describing $\delta_{max}$, and provides closed-form expressions for bounds $\delta_1$ and $\delta_2$ such that $\delta_1<\delta_{max}<\delta_2$. Proposition \eqref{proposition: Delta_cr bounds} also provides bounds $\Delta_1$ and $\Delta_2$ for the maximum value that $\Delta_{cr}$ can take as a function of $\delta$, so $\Delta_1 < \Delta_{cr}(\delta_{max}) < \Delta_2$. Lastly, we show that even if $\delta \neq \delta_{max}$, it is still beneficial to include the velocity information as long as $\delta < \delta_{cap}$. When $\delta$ exceeds $\delta_{cap} $, however, $\Delta_{cr}(\delta)$ becomes less than $\Delta_{cr}(0)$, so the queues are less likely to be stable than if the velocity information was omitted altogether. Proposition \eqref{proposition: delta cap} proves the existence of $\delta_{cap}$ and provides the equation for it. 


\section{Impact of Velocity Information on the Amplitude} \label{Sec: N=2}

Now that we have a good understanding of how the velocity information impacts the critical delay, we address a more practical question.  What is the impact of the velocity on the amplitude of the oscillations?  This question is important because it reveals how much the queues will oscillate when they are not in equilibrium.  Moreover, it can provide an estimate of how much throughput is lost because of the oscillations (amusement park capacity) or even provide valuable estimates of how much fuel or energy is lost in transportation settings. 

Although our previous analysis holds for an arbitrary number of queues, in the sequel, we will demonstrate how $\delta$ affects the amplitude dynamics of the queues in the case of a two queue network.  One reason for this restriction is that we must move beyond linearization techniques.  In fact, we must use third order Taylor expansions to obtain information about the amplitude, see for example \cite{novitzky2018}.  Thus, many of the matrix techniques we exploited for linearizing the NDDE in Section (2), cannot be used in the context of tensors for the third order Taylor expansion.   Thus, for the case of two dimensions, we have the following system of equations
\begin{eqnarray}
\label{q_1 dot}
\shortdot{q}_1(t) =  \lambda \cdot \frac{\exp\Big(-\theta\big(q_1(t - \Delta)+ \delta \shortdot{q}_1(t - \Delta)\big)\Big)}{\sum_{j = 1}^2 \exp\Big(-\theta\big(q_j(t - \Delta) + \delta \shortdot{q}_j(t - \Delta)\big)\Big)}  - \mu q_1(t)\\
\label{q_2 dot}
\shortdot{q}_2(t) =  \lambda \cdot \frac{\exp\Big(-\theta\big(q_2(t - \Delta)+ \delta \shortdot{q}_2(t - \Delta)\big)\Big)}{\sum_{j = 1}^2 \exp\Big(-\theta\big(q_j(t - \Delta) + \delta \shortdot{q}_j(t - \Delta)\big)\Big)} - \mu q_2(t),
\end{eqnarray}
where as usual $\Delta, \lambda,\mu , \theta> 0$ and $\delta \geq 0$. Similar to Section \eqref{Sec: comparison}, we will consider the scenario with $\lambda\theta > 2\mu$, where even for a small $\delta$ the equilibrium of queues loses stability for sufficiently large delay. Our first result shows that the Hopf bifurcations that occur at each root $\Delta_{cr}$ are supercritical.  

\begin{theorem}\label{Theorem: Hopf stability}
Suppose $\omega_{cr} \in \mathbb{R}$ and $\omega_{cr} \neq 0$. The NDDE system \eqref{q_1 dot} - \eqref{q_2 dot} undergoes a supercritical Hopf bifurcation at each root $\Delta_{cr}$. If $\delta\mu <1$ then the limit cycle is born when $\Delta \leq \Delta_{cr}$. If $\delta\mu >1$ then the limit cycle is born when $\Delta \geq \Delta_{cr}$.
\end{theorem}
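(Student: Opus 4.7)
My plan is to exploit the symmetry between the two queues to reduce \eqref{q_1 dot}--\eqref{q_2 dot} to a scalar NDDE and then apply the Lindstedt--Poincar\'e method near $\Delta_{cr}$ to extract the leading-order amplitude. Subtracting the two queue equations and setting $u(t)=q_1(t)-q_2(t)$ causes the common denominator in $p_1$ and $p_2$ to cancel, yielding
\begin{equation*}
\shortdot{u}(t) \;=\; -\lambda\tanh\!\left(\tfrac{\theta}{2}\bigl[u(t-\Delta)+\delta\shortdot{u}(t-\Delta)\bigr]\right) - \mu u(t),
\end{equation*}
while $q_1(t)+q_2(t)$ decouples into a linear equation with stable eigenvalue $-\mu$ and can be ignored. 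Linearization reproduces \eqref{char eq} at $N=2$, so the Hopf conditions from Proposition \ref{proposition: im roots} and the transversality from Theorem \ref{thm: Hopf} transfer verbatim to this scalar problem. A crucial structural feature is that $\tanh$ is odd: the cubic Taylor expansion has no quadratic term, so every even-order contribution in the subsequent perturbation hierarchy vanishes automatically.

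I would next introduce $\tau=\omega t$ together with the standard Lindstedt expansions $\omega = \omega_{cr}+\epsilon^{2}\omega_{2}+\cdots$, $\Delta = \Delta_{cr}+\epsilon^{2}\Delta_{2}+\cdots$, $u(\tau) = \epsilon u_1(\tau)+\epsilon^{3}u_3(\tau)+\cdots$, and expand the delayed arguments in $\epsilon^{2}$. At order $\epsilon$ the problem collapses to the linear NDDE at the bifurcation point, whose $2\pi$-periodic solutions are spanned by $u_1(\tau)=A\cos\tau$ after fixing the phase origin. At order $\epsilon^{3}$ the forcing of $u_3$ collects contributions from the two detunings and from the cubic term $\tfrac{\lambda\theta^{3}}{24}\bigl[u_1(\tau-\omega_{cr}\Delta_{cr})+\delta\omega_{cr}u_1'(\tau-\omega_{cr}\Delta_{cr})\bigr]^{3}$, which after product-to-sum identities splits into first and third harmonics. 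The Fredholm solvability condition, that the $\cos\tau$ and $\sin\tau$ components of this forcing vanish, yields two real equations in the unknowns $A^{2}$, $\omega_{2}$, $\Delta_{2}$, and solving them produces an amplitude relation of the form $A^{2}=K\,\Delta_{2}$ together with a formula for $\omega_{2}$.

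The central obstacle is to pin down the sign of the constant $K$, which governs both supercriticality and the side of $\Delta_{cr}$ on which the limit cycle is born. The raw expression for $K$ is an unwieldy rational function of $\lambda,\mu,\theta,\delta,\Delta_{cr},\omega_{cr}$; the decisive simplification is to use \eqref{Hopf condition} and \eqref{cos Hopf} to eliminate $\cos(\omega_{cr}\Delta_{cr})$ and $\sin(\omega_{cr}\Delta_{cr})$ in favor of algebraic combinations of $\lambda,\mu,\theta,\delta$, after which the sign of $K$ can be read off directly on each admissible regime. Supercriticality follows because this sign is arranged so that $A^{2}>0$ selects exactly the side of $\Delta_{cr}$ on which the equilibrium is unstable, matching the sign of $\tfrac{d\alpha}{d\Delta}(\Delta_{cr})$ recorded in \eqref{dalpha ddelta}. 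The two cases of the theorem are a direct consequence of the two possible signs of that transversality: in the regime where $\omega_{cr}$ is real and nonzero, $\delta<N/(\lambda\theta)$ is equivalent to $\delta\mu<1$ and forces $\tfrac{d\alpha}{d\Delta}(\Delta_{cr})>0$, whereas $\delta>N/(\lambda\theta)$ is equivalent to $\delta\mu>1$ and forces $\tfrac{d\alpha}{d\Delta}(\Delta_{cr})<0$, so the two assertions about the side of $\Delta_{cr}$ on which the limit cycle is born follow.
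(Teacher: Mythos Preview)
Your reduction to the scalar equation for $u=q_1-q_2$ via the $\tanh$ identity is correct and matches the paper (there the variable is $w_2$). The paper uses multiple scales rather than Lindstedt--Poincar\'e, obtaining a slow-flow ODE $dR/d\eta=-R(c_1R^2-c_2)/c_3$ from which both the amplitude and its stability follow at once; at leading order this is equivalent to your approach. Your observation that, with $N=2$ and $\omega_{cr}$ real and nonzero, $\delta<2/(\lambda\theta)\Longleftrightarrow\delta\mu<1$ is correct and neatly ties the two cases of the theorem to the sign of the transversality in \eqref{dalpha ddelta}.

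The substantive gap is the sign of $K$. You write that after eliminating $\cos(\omega_{cr}\Delta_{cr})$ and $\sin(\omega_{cr}\Delta_{cr})$ ``the sign of $K$ can be read off directly,'' and then that supercriticality follows because ``this sign is arranged so that $A^2>0$ selects exactly the side of $\Delta_{cr}$ on which the equilibrium is unstable.'' But this is the very thing to be proved: whether the cubic (Lyapunov-type) coefficient and the transversality have matching signs is precisely what distinguishes supercritical from subcritical, and nothing in your argument establishes it. In the paper's computation the decisive factor is
\[
c_1=(\mu^2+\omega_{cr}^2)\Bigl[(1+\delta^2\omega_{cr}^2)(\mu^2+\omega_{cr}^2)\,\Delta_{cr}+(1-\delta\mu)(\mu-\delta\omega_{cr}^2)\Bigr],
\]
whose sign is \emph{not} immediate: the second bracket can be negative (take $\delta\mu<1$ and $\delta\omega_{cr}^2>\mu$), and $\Delta_{cr}$ still appears explicitly even after the trigonometric substitutions. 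The paper closes this by extracting the lower bound $\Delta_{cr}>\pi/(2\omega_{cr})$ from $\cos(\omega_{cr}\Delta_{cr})<0$ in \eqref{cos Hopf} and then carrying out a nontrivial minimization (a quadratic in $\delta^2$ with a contradiction argument) to show that the $\Delta_{cr}$-term always dominates. This is the computational heart of the proof, and your proposal skips it.
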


\begin{proof}
We will use the method of {\it slow flow}, or the Method of Multiple Scales, to determine the stability of the Hopf bifurcations given by Theorem \eqref{thm: Hopf}. This method is often applied to systems of delay differential equations (DDEs) \cite{das2002,belhaq2008,lazarus2017}. We note, however, that the stability of the limit cycles can also be determined by showing that the floquet exponent has negative real part, as outlined in Hassard et al. \cite{hassard1981theory}. 

The first step in the method of slow flow is to consider the perturbation of $q_1$ and $q_2$ from the equilibrium point $q_1^* = q_2^* = \frac{\lambda}{2 \mu}$, and to approximate the resulting derivatives by third order Taylor expansion. The two resulting DDEs can be uncoupled when their sum and their difference are taken
\begin{eqnarray}
w_1(t) = q_1(t) + q_2(t), \quad w_2(t) = q_1(t) - q_2(t)  \label{w1 w2}\\
\shortdot{w}_1(t) = -\mu w_1(t) \\ \label{w2}
\shortdot{w}_2(t) = -\mu w_2(t) -\frac{\lambda\theta}{2}(w_2(t - \Delta) + \delta \shortdot{w}_2(t-\Delta))\\
+ \frac{\lambda\theta^3}{24}(w_2(t - \Delta) + \delta \shortdot{w}_2(t-\Delta))^3 + O(w_2^4).
\end{eqnarray}
The function $w_1(t) = C e^{-\mu t}$ decays to $0$, while the function $w_2(t)$  has a Hopf bifurcation at $\Delta_{cr}$ where the periodic solutions are born. 

We set $w_2(t) = \sqrt{\epsilon } x(t)$ in order to prepare the NDDE for perturbation treatment:
\begin{equation}\label{x}
\shortdot{x}(t) = -\mu x(t) -\frac{\lambda\theta}{2}(x(t - \Delta) + \delta \shortdot{x}(t-\Delta)) + \frac{\sqrt{\epsilon}\lambda\theta^3}{24}(x(t - \Delta) + \delta \shortdot{x}(t-\Delta))^3
\end{equation}
We replace the independent variable t by two new time variables
$\xi = \omega t$ (stretched time) and $\eta = \epsilon t$ (slow time).
Then we expand $\Delta$ and $\omega$ about the critical Hopf values:
\begin{align}
\Delta = \Delta_{cr} + \epsilon \alpha, \quad \omega = \omega_{cr} + \epsilon \beta.
\end{align}
The time derivative $\shortdot{x}$ becomes
\begin{equation}
\shortdot{x} = \frac{dx}{dt} = \frac{\partial x}{\partial \xi}\frac{d \xi}{dt} + \frac{\partial x}{\partial \eta}\frac{d \eta}{dt} = \frac{\partial x}{\partial \xi}\cdot (\omega_{cr} + \epsilon \beta) + \frac{\partial x}{\partial \eta}\cdot \epsilon.
\end{equation}
The expression for $x(t-\Delta)$ may be simplified by Taylor expansion for small $\epsilon$:
\begin{align}
x(t - \Delta) &= x(\xi - \omega \Delta, \eta - \epsilon \Delta) \\
&= x(\xi - (\omega_{cr} + \epsilon \beta) (\Delta_{cr} + \epsilon \alpha), \eta - \epsilon (\Delta_{cr} + \epsilon \alpha)) + O(\epsilon^2) \\
&=  \tilde{x} - \epsilon(\omega_{cr}\alpha + \Delta_{cr}\beta)\cdot \frac{\partial  \tilde{x}}{\partial \xi}  - \epsilon \Delta_{cr}\frac{\partial  \tilde{x}}{\partial \eta} + O(\epsilon^2), \label{x delayed}
\end{align}
where $ x(\xi - \omega_{cr}\Delta_{cr}, \eta) = \tilde{x}$.  The function $x$ is represented as $x = x_0 + \epsilon x_1 + \dots$, and we get
\begin{equation} \label{dx dt}
\frac{dx}{dt} = \omega_{cr} \frac{\partial x_0}{\partial \xi}  + \epsilon \beta \frac{\partial x_0}{\partial \xi} + \epsilon \frac{\partial x_0}{\partial \eta} + \epsilon \omega_{cr} \frac{\partial x_1}{\partial \xi} .
\end{equation}
After these substitutions are made into \eqref{x}, the resulting equation can be separated by the powers of $\epsilon$ into two equations. For the $\epsilon^0$ terms, we get an equation for $x_0$ without any terms involving $x_1$, namely $L(x_0)=0$, where
\begin{eqnarray}
L(x_0) = \mu x_0 +  \frac{\lambda}{2} \tilde{x}_0 + \omega_{cr} \frac{\partial x_0}{\partial \xi} +  \frac{\delta \lambda \omega_{cr}}{2}  \frac{\partial \tilde{x}_0}{\partial \xi} = 0,
\end{eqnarray}
which is satisfied with a solution of the form 
\begin{eqnarray} \label{x_0}
x_0(t) = A(\eta)\cos(\xi) + B(\eta) \sin(\xi).
\end{eqnarray}
The equation resulting from $\epsilon^1$ terms is $L(x_1) + M(x_0) = 0$. Since $L(x_1) = 0$ is satisfied by a solution of the form \eqref{x_0}, then the terms from $M(x_0)$ involving  $\cos(\xi)$ and $\sin(\xi)$ are resonant. To eliminate the resonant terms, their coefficients must be $0$, which gives two equations for $A(\eta)$ and $B(\eta)$. Switching into polar coordinates, we define $R = \sqrt{A^2 + B^2}$, and find 
\begin{eqnarray} \label{dR deta}
\frac{d R}{d \eta}& = &- \frac{R\big( c_1 R^2 - c_2 \big)}{c_3},\quad  \text{ where} \\
c_1 &=&(\mu^2 + \omega_{cr}^2) (\mu^2 + \omega_{cr}^2 + \delta^2 \omega_{cr}^2\mu^2+ \delta^2\omega_{cr}^4 )\Delta_{cr} \\
&+& (\mu^2 + \omega_{cr}^2)(\mu - \delta \mu^2 + \delta^2 \omega_{cr}^2 \mu - \delta\omega_{cr}^2),  \\
c_2 &=& 4 \alpha \lambda^2\omega_{cr}^2 (1 - \delta^2\mu^2), \\
c_3 &=& \Delta_{cr}^2\cdot 4\lambda^2(\mu^2 + \omega_{cr}^2+ \delta^2\mu^2\omega^2 + \delta^2\omega_{cr}^2) \\
&+& \Delta_{cr} \cdot  8\lambda^2(\mu - \delta\mu^2 - \delta \omega_{cr}^2 + \delta^2\mu \omega_{cr}^2) +  4\lambda^2(1 - \delta\mu)^2. 
\end{eqnarray}
In order to find the equilibrium points of $R$ and to discuss their stability, we need to show that the coefficients $c_1,c_2$, and $c_3$ are positive. Notice that $c_3$ is a quadratic function of $\Delta_{cr}$ with the minimum located at $\Delta_{cr}^*$ such that $\frac{d }{d \Delta_{cr}} c_3(\Delta_{cr}^*) = 0$, hence
\begin{eqnarray}
\Delta_{cr}^* = \frac{\delta}{1 + \delta^2\omega_{cr}^2} - \frac{\mu}{\mu^2 + \omega_{cr}^2} \\
c_3 = c_3(\Delta_{cr}) \geq c_3(\Delta_{cr}^*) = \frac{4\lambda^2 \omega_{cr}^2(1 - \delta^2\mu^2)^2}{(\mu^2 + \omega_{cr}^2)(1+\delta^2\omega_{cr}^2)} >0
\end{eqnarray}
therefore the denominator of $\frac{d R}{d \eta}$, $c_3$, is always positive. Also, we can show $c_1$ to be positive. We first note that at the Hopf, Equation \eqref{cos Hopf} must be satisfied so $\cos(\omega_{cr}\Delta_{cr}) <0$, which implies that  $\omega_{cr}\Delta_{cr} > \frac{\pi}{2}$ and 
\begin{eqnarray} \label{delta_cr pi/2 bound}
\Delta_{cr} > \frac{\pi}{2\omega_{cr}}.
\end{eqnarray} 
Next, we note that $c_1$ is an increasing linear function of $\Delta_{cr}$, so $c_1$ must be positive for any $\Delta_{cr} >\Delta_{cr}^*$ where $c_1(\Delta_{cr}^*) = 0$. This $\Delta_{cr}^*$ is found to be
\begin{eqnarray}
\Delta_{cr}^* = \frac{\delta}{1 + \delta^2\omega_{cr}^2} - \frac{\mu}{\mu^2 + \omega_{cr}^2}.
\end{eqnarray}
Using the inequality in \eqref{delta_cr pi/2 bound}, we can show by contradiction that $\Delta_{cr}$ is always greater than $\Delta_{cr}^*$. Suppose that for some parameters, we have $\Delta_{cr}^* > \frac{\pi}{2\omega_{cr}}$. From the equation \eqref{omega cr}, this implies that 
\begin{eqnarray}
\frac{\pi}{2\omega_{cr}}<\Delta_{cr}^* = \frac{\delta}{1 + \delta^2\omega_{cr}^2} - \frac{\mu}{\mu^2 + \omega_{cr}^2}<\frac{\delta}{1 + \delta^2\omega_{cr}^2}\\
\frac{\pi}{2}(1 + \delta^2\omega_{cr}^2) <\delta \omega_{cr}\\
\frac{2\pi(1 - \delta^2 \mu^2)}{4 - \delta^2\lambda^2\theta^2} <  \delta \sqrt{\frac{\lambda^2\theta^2 - 4\mu^2}{4 - \delta^2\lambda^2\theta^2}} \\
4\pi^2 \cdot \frac{(1  - \delta^2\mu^2)^2}{(4 - \delta^2\lambda^2\theta^2)^2} <  \delta^2 \cdot \frac{\lambda^2\theta^2 - 4\mu^2}{4 - \delta^2\lambda^2\theta^2}\\
4\pi^2 \big(1  - \delta^2\mu^2\big)^2 <  \delta^2 (\lambda^2\theta^2 - 4\mu^2)(4 - \delta^2\lambda^2\theta^2)
\end{eqnarray}
Set $\bar{\delta} = \delta^2$. The inequality can be written as
\begin{eqnarray}
f(\bar{\delta}) = \Big(\lambda^4\theta^4 - 4\lambda^2\theta^2\mu^2 + 4\pi^2\mu^4 \Big)\bar{\delta}^2 - 4\Big(\lambda^2\theta^2 + 2\pi^2\mu^2 - 4\mu^2\Big)\bar{\delta} + 4\pi^2 <0. \label{help1}
\end{eqnarray}
Notice that the coefficient of $\bar{\delta}^2$ is always positive. It can be shown by finding $\mu^2$ that minimizes the coefficient, $\mu^2 = \frac{\lambda^2\theta^2}{2\pi^2}$, and then finding the minimum value of that coefficient, which is $\lambda^4\theta^4\Big(1 - \frac{1}{\pi^2}\Big)$ so it's clearly positive. This means that $ f(\bar{\delta})$ is a convex function, with a minimum at $\bar{\delta}^*$
\begin{eqnarray}
\bar{\delta}^* = \frac{2(\lambda^2\theta^2 + 2\pi^2\mu^2 - 4\mu^2)}{\lambda^4\theta^4 - 4\lambda^2\theta^2\mu^2 + 4\pi^2\mu^4} \\
f(\bar{\delta}) \geq f(\bar{\delta}^* ) = \frac{4(\pi^2 - 1)(\lambda^2\theta^2 - 4\mu^2)^2}{\lambda^4\theta^4 - 4\lambda^2\theta^2\mu^2 + 4\pi^2\mu^4}  >0,\label{help2}
\end{eqnarray} 
where the denominator is the same as the coefficient of $\bar{\delta}^2$ from Equation \eqref{help1}, so it must be positive. The inequalities \eqref{help1} and \eqref{help2} contradict each other, and so $\Delta_{cr}^* \leq \frac{\pi}{2\omega_{cr}}$ for all parameters. Hence by Equation \eqref{delta_cr pi/2 bound}, $\Delta_{cr}>\Delta_{cr}^*$, which implies that $c_1$ must be positive. 

Since $c_1$ is positive, the only way for $R$ from \eqref{dR deta} to have a nonzero equilibrium point is for $c_2$ to be also positive. This produces the conditions on the direction of the Hopf 
\begin{eqnarray}
\delta \mu<1 &\implies& \alpha >0 \\
\delta \mu>1 &\implies& \alpha <0 .
\end{eqnarray}
Recall that $\alpha$ represents the perturbation from $\Delta_{cr}$. So when $\delta \mu<1$, the limit cycle is born when $\Delta$ exceeds $\Delta_{cr}$. If $\delta \mu>1$, then the limit cycle is born when $\Delta$ becomes less than $\Delta_{cr}$. In either case, the equilibrium points of $R(\eta)$ are given by 
\begin{eqnarray}
R_0 = 0, \quad R_1 = \sqrt{\frac{c_2}{c_1}}>0.
\end{eqnarray}
Since $c_1,c_2,c_3>0$, $R_0$ is unstable and $R_1$ is stable. In its explicit form,
\begin{eqnarray} \label{R1}
R_1 = \sqrt{ \frac{4\alpha (\lambda^2\theta^2 - 4 \mu^2)(4 - \delta^2 \lambda^2\theta^2)^2}{\theta^2(1 - \delta^2\mu^2)(16\mu + \lambda^2\theta^2(4\Delta_{cr} -4\delta + \delta^3\lambda^2\theta^2 - 4\delta^2\mu -4\delta^2 \Delta_{cr}\mu^2))}}
\end{eqnarray}
and it represents the amplitude of the limit cycle near the Hopf. Since $R_1$ is stable, then the Hopf bifurcation is supercritical.
\end{proof}
Theorem \eqref{Theorem: Hopf stability} establishes that as $\Delta$ increases, the equilibrium becomes unstable and a stable limit cycle is born. 

\subsection{First-Order Approximation of Amplitude}
We would like to choose the weight coefficient $\delta$ in a way that minimizes the amplitude of the oscillation in queues. To do this, we first need to know what the amplitude of the oscillations is as a function of the system parameters. In the following result, we use a perturbation method to approximate the amplitude of oscillations around the bifurcation point.
\begin{proposition} \label{proposition: queue amplitude}
The amplitude of the oscillations of the queues near the first Hopf can be approximated by $\frac{R_1}{2}$, where $R_1$ is given by Equation \eqref{R1}.
\end{proposition}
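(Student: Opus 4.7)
The plan is to leverage the slow-flow calculation already carried out inside the proof of Theorem~\eqref{Theorem: Hopf stability}. That calculation established two facts that together deliver the result: first, the perturbation variable $w_1(t)$, defined via $w_1 = (q_1 - q_1^*) + (q_2 - q_2^*)$, obeys the linear ODE $\dot{w}_1 = -\mu w_1$ and therefore decays exponentially to zero; second, after the rescaling $w_2 = \sqrt{\epsilon}\, x$ and the two-time expansion $x_0(t) = A(\eta)\cos\xi + B(\eta)\sin\xi$, the amplitude $R(\eta) = \sqrt{A(\eta)^2 + B(\eta)^2}$ satisfies the slow flow $\frac{dR}{d\eta} = -R(c_1 R^2 - c_2)/c_3$, whose unique positive fixed point $R_1 = \sqrt{c_2/c_1}$ is asymptotically stable with the explicit form \eqref{R1}.

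The first step is to invert the change of variables $w_1,w_2 \mapsto q_1,q_2$, giving
\begin{align*}
q_1(t) &= \frac{\lambda}{2\mu} + \frac{w_1(t) + w_2(t)}{2}, \\
q_2(t) &= \frac{\lambda}{2\mu} + \frac{w_1(t) - w_2(t)}{2}.
\end{align*}
Thus, once the $w_1$ transient has decayed, the oscillation of each queue about its equilibrium value $\tfrac{\lambda}{2\mu}$ is driven entirely by $\tfrac12 w_2(t)$, with $q_1$ and $q_2$ oscillating in antiphase.

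Next I translate the slow-flow conclusion into an asymptotic formula for $w_2$ in physical time. Since $R(\eta)\to R_1$, the leading-order solution converges to the limit cycle $x_0(t) \approx R_1\cos(\omega t - \varphi)$ for some phase $\varphi$, so
\[
w_2(t) \;\approx\; R_1\cos(\omega t - \varphi),
\]
where the bookkeeping parameter $\epsilon$ is absorbed by combining the rescalings $\sqrt{\epsilon}\,x = w_2$ and $\epsilon\alpha = \Delta - \Delta_{cr}$; the $\sqrt{\epsilon}$ factor from undoing $w_2 = \sqrt{\epsilon}x$ pairs with the $\sqrt{\alpha}$ appearing in $R_1=\sqrt{c_2/c_1}$ to reproduce the characteristic $\sqrt{\Delta-\Delta_{cr}}$ scaling of a supercritical Hopf, which is already visible in expression \eqref{R1}.

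Substituting into the inverted change of variables and discarding the exponentially small $w_1$ contribution, one obtains
\[
q_1(t) - \frac{\lambda}{2\mu} \;\approx\; \frac{R_1}{2}\cos(\omega t - \varphi), \qquad q_2(t) - \frac{\lambda}{2\mu} \;\approx\; -\frac{R_1}{2}\cos(\omega t - \varphi),
\]
so each queue oscillates about its equilibrium with amplitude $R_1/2$, as claimed. The only delicate point is the separation of time scales: one must verify that the window on which $w_1$ is non-negligible does not overlap with the window on which the $w_2$-amplitude is being read off. Because $w_1$ decays at the fixed rate $\mu$ independently of $\Delta-\Delta_{cr}$, while $R(\eta)$ settles on the slow time $\eta = \epsilon t$, the $O(1)$ transient in $w_1$ dies out well before $R$ reaches its fixed point, so the $w_1$ contribution to the observed amplitude is asymptotically negligible. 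This is the only step requiring genuine justification; the remaining bookkeeping is already packaged in \eqref{R1}.
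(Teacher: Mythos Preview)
Your argument is correct and follows essentially the same route as the paper: invert the change of variables $w_1,w_2\mapsto q_1,q_2$, use that $w_1$ decays to a constant while $w_2$ settles onto the limit cycle of amplitude $R_1$, and read off that each queue oscillates with amplitude $R_1/2$. In fact you are more careful than the paper, which omits both the $\sqrt{\epsilon}$ bookkeeping and the timescale-separation remark.
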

\begin{proof}
The radius of the limit cycle from \eqref{R1} approximates the amplitude of the oscillations of $w_2(t)$ from \eqref{w2}. By the change of variables given in Equation \eqref{w1 w2}, as $t \to \infty$, the behavior of the queues up to a phase shift is 
\begin{eqnarray}
q_1 = \frac{1}{2}(w_1 + w_2) \to \frac{1}{2}R_1\sin(\omega \Delta t) \\
q_2 = \frac{1}{2}(w_1 - w_2) \to -\frac{1}{2}R_1\sin(\omega \Delta t).
\end{eqnarray}
Thus, the amplitude of oscillations of queues is $\frac{R_1}{2}$.
\end{proof} 

Therefore, when $\Delta$ exceeds $\Delta_{cr}$, the amplitude of oscillations can be approximated to first order by 
\begin{eqnarray}
 \label{amplitude}
\text{amplitude} \approx \sqrt{ \frac{(\Delta - \Delta_{cr}) (\lambda^2\theta^2 - 4 \mu^2)(4 - \delta^2 \lambda^2\theta^2)^2}{\theta^2(1 - \delta^2\mu^2)(16\mu + \lambda^2\theta^2(4\Delta_{cr} -4\delta + \delta^3\lambda^2\theta^2 - 4\delta^2\mu -4\delta^2 \Delta_{cr}\mu^2))}} \label{amplitude}
\end{eqnarray}
when $\Delta - \Delta_{cr}$ is small. 

The approximation is accurate when $\delta$ is substantially smaller than the ratio $\frac{2}{\lambda \theta}$. For example, in Figure \eqref{Fig: queues amp 3} the queues oscillate throughout time, and the two horizontal lines provide a good approximation of the amplitude of oscillations based on Equation \eqref{amplitude}. However, the approximation becomes inaccurate when $\delta$ approaches $\frac{2}{\lambda \theta}$. As demonstrated in Figure \eqref{Fig: queues amp 1}, when $\delta = 0.195$ and $\frac{2}{\lambda \theta} = 0.2$, the approximated amplitude is only about a half of what the actual amplitude is. The discrepancy is observed in Figures \eqref{Fig: amplitude surf} - \eqref{Fig: amplitude error d neq 0} as well. The surface plot in Figure \eqref{Fig: amplitude surf} shows the true amplitude based on numerical integration as a function of the delay $\Delta$ and the coefficient $\delta$, while the surface plot in Figure \eqref{Fig: amplitude approx 1st ord} shows the amplitude's first-order approximation. Furthermore, the surface plot in Figure \eqref{Fig: amplitude error 1st ord} shows the error of first-order approximation, where the error increases with $\delta$. Finally, Figure \eqref{Fig: amplitude error d neq 0} provides intuition for why the approximation fails as $\delta$ approaches $\frac{2}{\lambda \theta}$. Figure \eqref{Fig: amplitude error d neq 0} presents a plot comparing the amplitude and its approximation as functions of delay while $\delta = 0.19$ is close to the threshold  $\frac{2}{\lambda \theta} = 0.2$. The approximation is proportional to $ \sqrt{\Delta - \Delta_{cr}}$, while the true amplitude appears to be a linear function of $(\Delta - \Delta_{cr})$ (even though it is not exactly linear). 

\begin{figure}[H]
  \centering
  \begin{minipage}[b]{0.49\textwidth}
    \includegraphics[width=\textwidth]{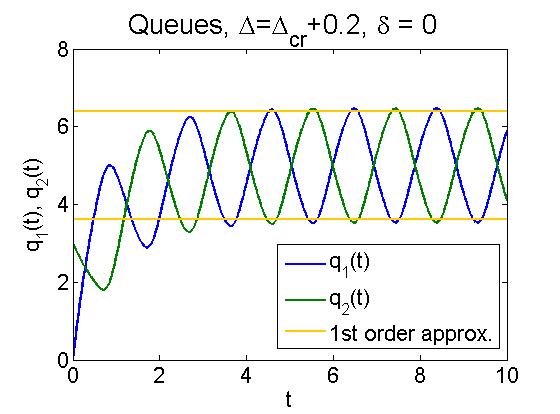}
    \caption{Amplitude approximation, \\
    $\frac{N}{\lambda \theta} = 0.2$, $\Delta = \Delta_{cr} + 0.2$, $\delta = 0$.}
    \label{Fig: queues amp 3}
  \end{minipage}
  \hfill
  \begin{minipage}[b]{0.49\textwidth}
    \includegraphics[width=\textwidth]{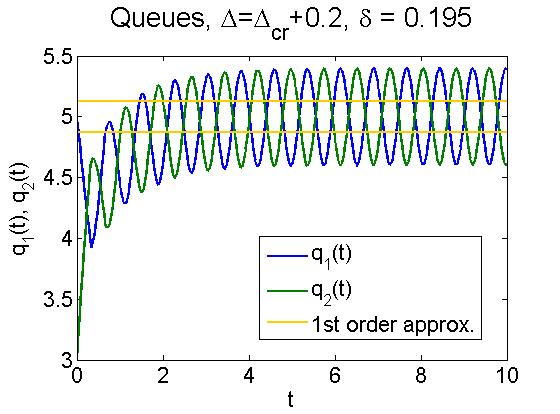}
    \caption{Amplitude approximation, \\
    $\frac{N}{\lambda \theta} = 0.2$, $\Delta = \Delta_{cr} + 0.2$, $\delta = 0.195$.}
    \label{Fig: queues amp 1}
  \end{minipage}
\end{figure}

\begin{figure}[H]
  \centering
  \begin{minipage}[b]{0.49\textwidth}
    \includegraphics[width=\textwidth]{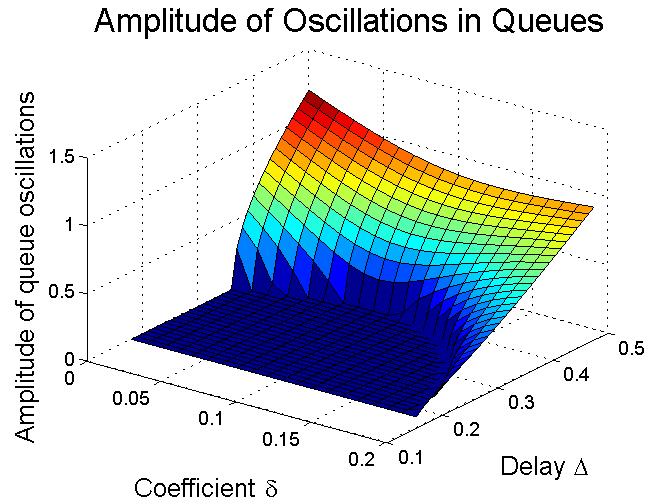}
    \caption{Amplitude of oscillations; \\
    $\theta = 1$, $\lambda = 10,$ $\mu = 1$.}
    \label{Fig: amplitude surf}
  \end{minipage}
  \hfill
  \begin{minipage}[b]{0.49\textwidth}
    \includegraphics[width=\textwidth]{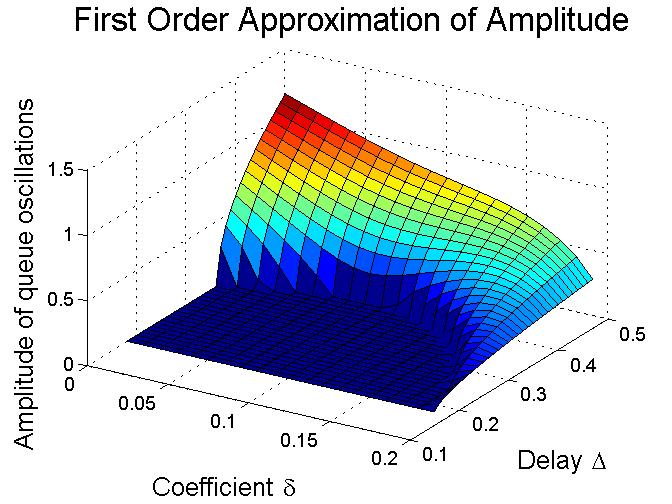}
    \caption{First-order approximation; \\
    $\theta = 1$, $\lambda = 10,$ $\mu = 1$.}
    \label{Fig: amplitude approx 1st ord}
  \end{minipage}
\end{figure}

\begin{figure}[H]
  \centering
  \begin{minipage}[b]{0.49\textwidth}
    \includegraphics[width=\textwidth]{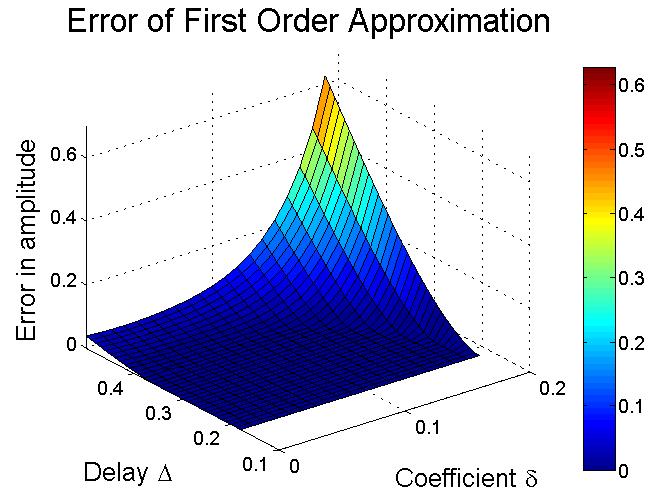}
    \caption{Error of approximation; \\
    $\theta = 1$, $\lambda = 10,$ $\mu = 1$.}
    \label{Fig: amplitude error 1st ord}
  \end{minipage}    
  \hfill
  \begin{minipage}[b]{0.49\textwidth}
    \includegraphics[width=\textwidth]{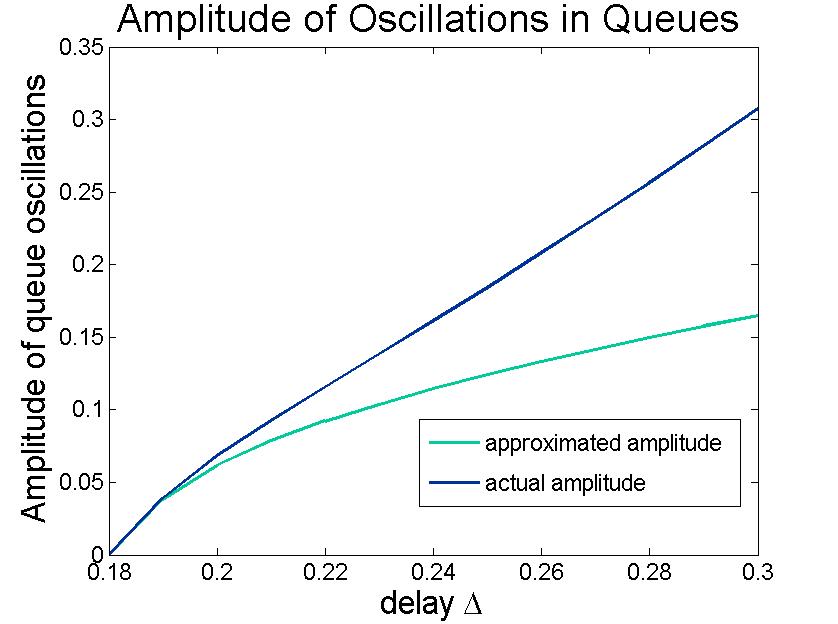}
    \caption{First-order approximation \\when $\delta = 0.19$, $\theta = 1$, $\lambda = 10,$ $\mu = 1$.}
    \label{Fig: amplitude error d neq 0}
  \end{minipage}
\end{figure}

Since we are interested in using the analytical expression of the amplitude approximation to determine the coefficient $\delta$ that minimizes the amplitude for a given delay, it is important for the approximation to be accurate. As seen from Figure \eqref{Fig: amplitude approx 1st ord}, for a fixed delay, say $\Delta = 0.5$, the point of the approximated minimum amplitude (at $\delta \approx 0.2$) does not agree with the true minimum amplitude (at $\delta \approx 0.11$). Hence, the first-order approximation of amplitude is insufficient for our purposes, and we must derive the second order of the approximation.

\subsection{Second-Order Approximation of Amplitude} \label{Sec: second order}
The first-order approximation, as seen from Equation \eqref{amplitude}, is of the form
\begin{eqnarray}
\text{Amplitude} \approx c_0 (\Delta - \Delta_{cr})^{0.5}, 
\end{eqnarray}
where $c_0$ is a factor determined by the system parameters and is independent of delay. The second-order approximation takes the form 
\begin{eqnarray}
\text{Amplitude} \approx c_0 (\Delta - \Delta_{cr})^{0.5}  + c_1(\Delta - \Delta_{cr})^{1.5},
\end{eqnarray}
where $c_1$ is also independent of the delay. The full expression for $c_1$ is long and messy, so we omit it from this Section. However, the reader can refer to the Appendix \eqref{Sec: Approximating Amplitude} for the expressions as well as a discussion on how the second-order approximation is obtained. As shown in Figures \eqref{Fig: queues amp 4} and \eqref{Fig: queues amp 2}, the second order approximation performs just as well as the first order approximation when $\delta$ is significantly smaller than $\frac{2}{\lambda \theta}$, but is much more accurate when $\delta$ approaches $\frac{2}{\lambda \theta}$. Figures \eqref{Fig: true amp} and \eqref{Fig: amplitude 2nd ord} confirm that this trend holds throughout the parameter space in $\delta$ and delay $\Delta$. Figure \eqref{Fig: amp comparison 0.1} compares the true amplitude with the two approximations when $\delta = 0.1$. The next plot in Figure \eqref{Fig: amp comparison 0.19} draws the same comparison but when $\delta = 0.19$ is closer to its upper limit $\frac{2}{\lambda \theta} = 0.2$. It is evident from the two plots that the second-order approximation is significantly more accurate than the first-order approximation, especially as $\delta \to \frac{2}{\lambda \theta}$. Figures \eqref{Fig: amplitude error 1st ord 1} and \eqref{Fig: amplitude error 2nd ord} illustrate the same point more systematically, by comparing the errors of first and second order approximations. These surface plots reveal that the higher order approximation decreases the maximum error by a factor of 10.

\begin{figure}[H]
  \centering
  \begin{minipage}[b]{0.49\textwidth}
    \includegraphics[width=\textwidth]{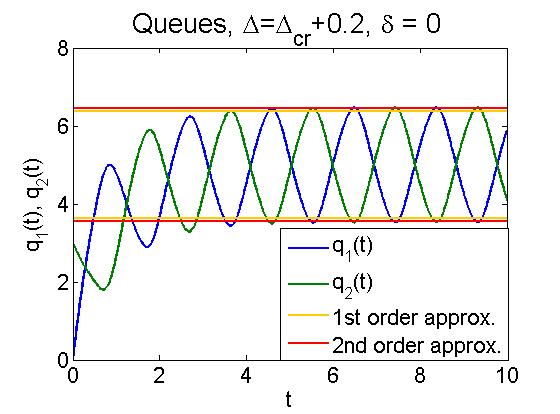}
    \caption{Amplitude approximation, \\
    $\frac{N}{\lambda \theta} = 0.2$, $\Delta = \Delta_{cr} + 0.2$, $\delta = 0$.}
    \label{Fig: queues amp 4}
  \end{minipage}
  \hfill
  \begin{minipage}[b]{0.49\textwidth}
    \includegraphics[width=\textwidth]{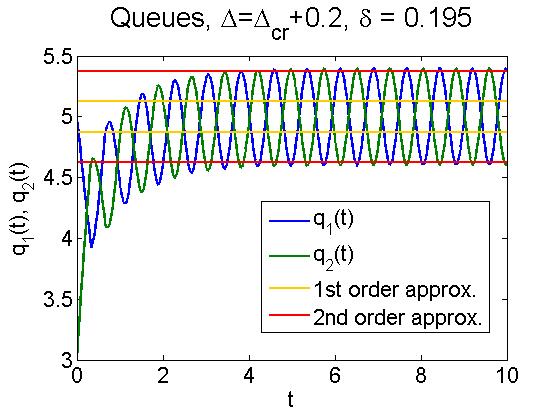}
    \caption{Amplitude approximation, \\
    $\frac{N}{\lambda \theta} = 0.2$, $\Delta = \Delta_{cr} + 0.2$, $\delta = 0.195$.}
    \label{Fig: queues amp 2}
  \end{minipage}
\end{figure}

\begin{figure}[H]
  \centering
  \begin{minipage}[b]{0.49\textwidth}
    \includegraphics[width=\textwidth]{amp_true.jpg}
    \caption{Amplitude of oscillations; \\
    $\theta = 1$, $\lambda = 10,$ $\mu = 1$. }
    \label{Fig: true amp}
  \end{minipage}    
  \hfill
  \begin{minipage}[b]{0.49\textwidth}
    \includegraphics[width=\textwidth]{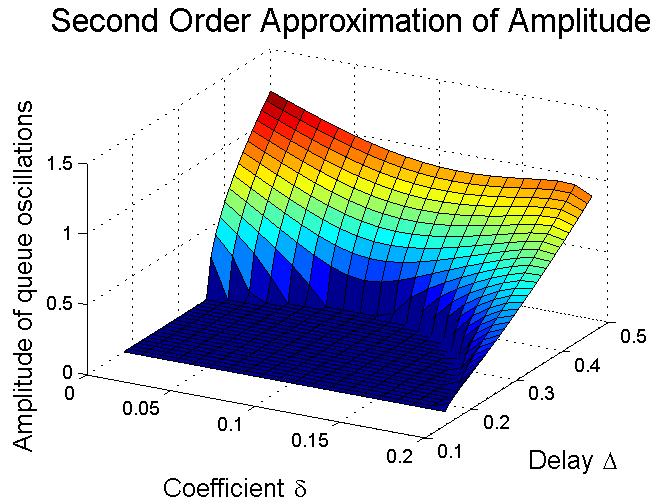}
    \caption{First-order approximation; \\
    $\theta = 1$, $\lambda = 10,$ $\mu = 1$. }
    \label{Fig: amplitude 2nd ord}
  \end{minipage}
\end{figure}

\begin{figure}[H]
  \centering
  \begin{minipage}[b]{0.49\textwidth}
    \includegraphics[width=\textwidth]{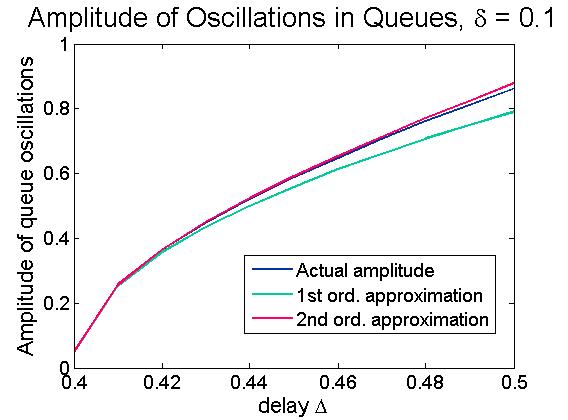}
    \caption{Comparison when $\delta = 0.10$; \\
    $\theta = 1$, $\lambda = 10,$ $\mu = 1$. }    \label{Fig: amp comparison 0.1}
  \end{minipage}    
  \hfill
  \begin{minipage}[b]{0.49\textwidth}
    \includegraphics[width=\textwidth]{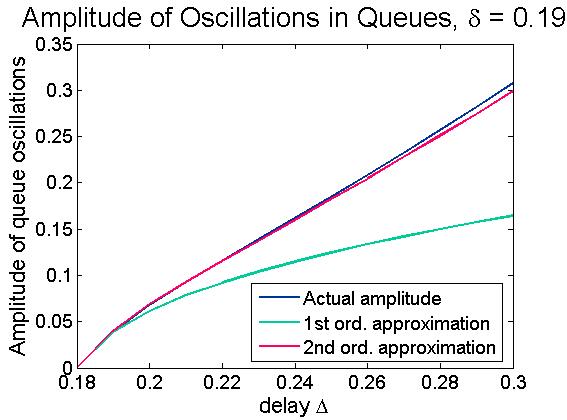}
    \caption{Comparison when $\delta = 0.19$; \\
    $\theta = 1$, $\lambda = 10,$ $\mu = 1$. }    \label{Fig: amp comparison 0.19}
  \end{minipage}
\end{figure}

\begin{figure}[H]
  \centering
  \begin{minipage}[b]{0.49\textwidth}
    \includegraphics[width=\textwidth]{amp_err_1st_ord.jpg}
        \caption{First-order error; \\
    $\theta = 1$, $\lambda = 10,$ $\mu = 1$.}
    \label{Fig: amplitude error 1st ord 1}
  \end{minipage}    
  \hfill
  \begin{minipage}[b]{0.49\textwidth}
    \includegraphics[width=\textwidth]{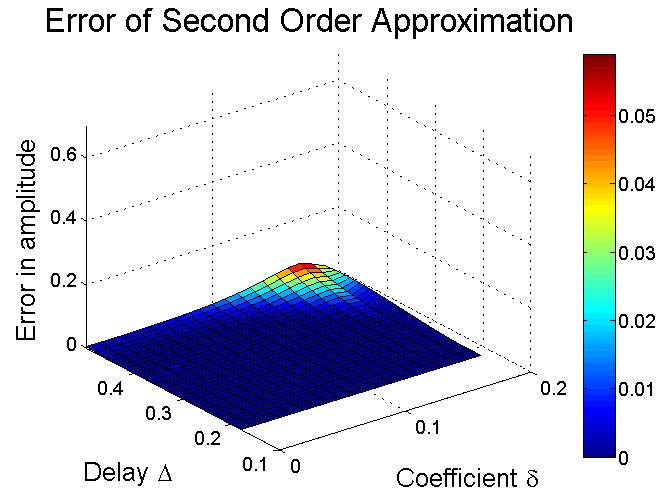}
    \caption{Second-order error; \\
    $\theta = 1$, $\lambda = 10,$ $\mu = 1$.}
    \label{Fig: amplitude error 2nd ord}
  \end{minipage}
\end{figure}
\subsection{Minimizing the Amplitude of Oscillations} \label{Sec: minimize amplitude}
Since the second-order approximation is sufficiently accurate, we proceed by using the analytical formula of the second-order approximation to determine the coefficient $\delta$ that minimizes the amplitude of oscillations. Figure \eqref{Fig: amplitude minimized} shows the numerically computed amplitude, together with its minimum for each delay according to the second-order approximation. The minimum of the amplitude as a function of $\delta$ is found numerically in MATLAB. It is evident that the approximated minimum closely corresponds to where the true minimum is. 

Figure \eqref{Fig: amplitude minimized} shows that the velocity information indeed affects the amplitude of oscillations, and the amplitude can be reduced with a proper choice of the coefficient $\delta$. Figure \eqref{Fig: amplitude minimized} also reveals an important finding. {\bf The value $\delta_{max}$ for the coefficient $\delta$ that maximizes $\Delta_{cr}$ is not the same as $\delta_{amp}$ that minimizes the amplitude of oscillations}. Specifically, $\delta_{max}$ is independent of the delay $\Delta$, while $\delta_{amp}$ is a function of the delay. The one point where the two values are guaranteed to be equal each other, $\delta_{max} = \delta_{amp}$, is when $\delta_{amp}$ is computed for the delay eqal to the maximum possible $\Delta_{cr}$, i.e. $\Delta = \Delta_{cr}(\delta_{max})$. Therefore, one should use $\delta_{max}$ as the weight coefficient as long as the delay is less than the bifurcation threshold $\Delta_{cr}$ evaluated at $\delta_{max}$, but when the delay exceeds $\Delta_{cr}$ one should use $\delta_{amp}$ for the weight coefficient instead.

\begin{figure}[H]
  \centering
    \includegraphics[width=0.75\textwidth]{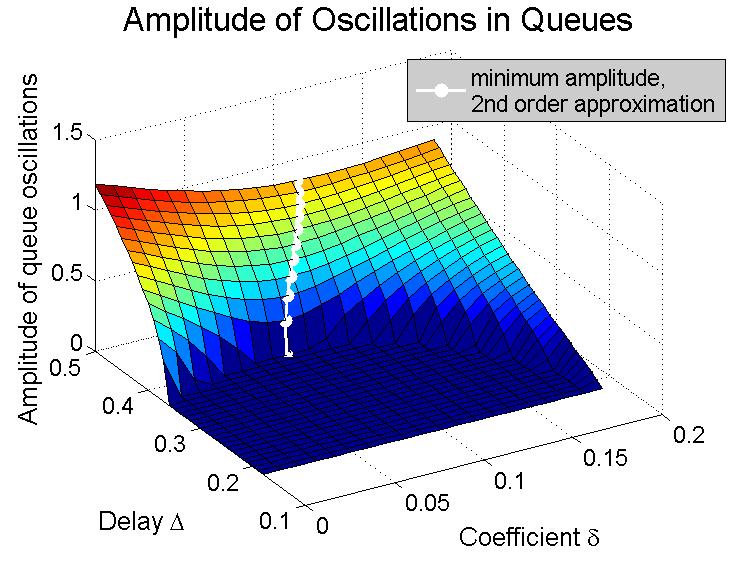}
        \caption{For any delay, the amplitude can be minimized as a function of $\delta$.}
    \label{Fig: amplitude minimized}
\end{figure}


\section{Conclusion}
This paper answers important questions with regards to businesses incorporating the queue length velocity into the information that is provided to customers via delay announcements. We consider the information passed to the customers about each of $N$ queues to be a linear combination of the current queue length and the rate at which that queue is moving, or the queue velocity,
\begin{eqnarray}
\text{delay announcement about }i^{th}\text{ queue} = q_i(t- \Delta) + \delta \shortdot{q}_i(t - \Delta) ,
\end{eqnarray}
with the delay $\Delta$ being the time of customers travelling to the selected queue. 

The most evident finding is that the coefficient $\delta$ that weighs the queue velocity information should always be less than the ratio $\frac{N}{\lambda \theta}$. Maintaining this limit guarantees that, at best, the queues will be locally stable for any delay in information. At worst, the queues will be stable when the delay $\Delta$ is sufficiently small, eventually undergoing a Hopf bifurcation at $\Delta = \Delta_{cr}$ and becoming unstable. Alternatively, if $\delta >\frac{N}{\lambda \theta}$, then the queues will never be stable even when the delay in information is infinitesimally small. The reader can refer to Figure \eqref{Fig: queue stability cases 0} for more details. 

Even when the condition $\delta < \frac{N}{\lambda \theta}$ is met, significant improvements can still be made by choosing $\delta$ optimally. In the case when queues become unstable as the delay in information increases (so when $\lambda \theta > N\mu$), the weight $\delta$ can shift the delay threshold $\Delta_{cr}$ at which the queues become unstable. In fact, there is exists a "cap" on the weight $\delta_{cap}$, such that it is safe and beneficial to include the queue velocity whenever $\delta \leq \delta_{cap}$, meaning that the queues will remain stable under greater delay than if the velocity information was omitted. Further, if the threshold $\delta_{cap}$ is exceeded, then queue velocity information will be harmful to the system. In this case, the queues will lose their stability for a smaller delay $\Delta$ than if the queue velocity was omitted altogether. An edge case that exemplifies the usefulness of this discovery is as follows. If we take $\delta \to \frac{N}{\lambda \theta}$, at which point is is clear that $\delta > \delta_{cap}$, then the queues bifurcate almost immediately because $\Delta_{cr} \to 0$ even though the same queues would have remained stable under a much larger delay if $\delta$ was set to 0. Hence, it is important to keep $\delta$ smaller than $\delta_{cap}$.

We also showed that there exists an optimal value for $\delta$ called $\delta_{max}$ that gives the most stability to the queues. For $\delta = \delta_{max}$, queues will be stable for greater delay than is possible given any other choice of $\delta$. We provide an equation from which $\delta_{max}$ can be found numerically, as well as closed form expressions for upper and lower bounds on $\delta_{max}$. Choosing $\delta$ within those bounds is a safe choice for the service managers.

This leads to a natural assessment of the limitations of providing the queue velocity information. The threshold $\Delta_{cr}$ where the queues lose stability can be arbitrarily close to 0 when $\delta$ is chosen poorly, but even the best choice of $\delta$ can only help so much. We provide a formula from which the maximum attainable $\Delta_{cr}$ can be computed. Further, we give expressions on the bounds for that optimal $\Delta_{cr}$ because they don't rely on $\delta_{max}$ and hence they may be easier to evaluate. This means that while including $\delta$ can always improve the queue dynamics to some degree, there is a limit on how much impact $\delta$ may have. 

The presence of the queue velocity information can also affect the amplitude with which the queues oscillate after losing stability. From numerical integration of the queues such as in Figure \eqref{Fig: amplitude surf}, it is clear that incorporating the queue velocity information can decrease the amplitude of the oscillations, which is beneficial from the managerial perspective. Using a perturbation technique, we derive an analytic expression that approximates the amplitude of oscillations very accurately. Based on the analytic expression, for any delay we can determine the coefficient $\delta$ that will minimize the amplitude of the oscillations. We note that this coefficient as a function of delay, and is not necessarily equal to the coefficient $\delta_{max}$ that maximizes the delay threshold.

In the future, it would be interesting to extend this model to include terms with higher order derivatives. Under the assumption that service managers can measure the information about the queues  ($q_i^{(2)},q_i^{(3)},\dots$), it would be natural to incorporate this data into the information that is provided to the customers:
\begin{eqnarray}
\text{delay announcement about }i^{th}\text{ queue} = q_i(t- \Delta) + \sum_{n = 1}^{K} \delta_n q_i^{(n)}(t - \Delta), \quad K \in \mathbb{N}.
\end{eqnarray}
The equations describing such a queueing system will no longer be neutral, and may be more complicated. However, such queueing system may answer new questions. One question of significance is to determine the minimum sufficient number of higher-order derivatives ($K$) that should be included in order to guarantee that the queues will be stable for a given delay.


\bibliographystyle{ieeetr}
\bibliography{main}


\section{Appendix}
\subsection{Uniqueness and existence of the equilibrium}\label{Subsec: equilibrium existence}
\begin{proof}[Proof of Theorem \eqref{Thm: equilibrium existence}] 
To check that $q_i(t)= \frac{\lambda}{N\mu}$ is an equilibrium, plug into Equation \eqref{q_i dot} to get
\begin{equation}
\longdot{q}_i(t) = \lambda \cdot \frac{\exp(-\frac{\lambda\theta}{N\mu} - 0)}{\sum^{N}_{j=1} \exp(-\frac{\lambda\theta}{N\mu} - 0) } - \mu \cdot \frac{\lambda}{N\mu} = \frac{\lambda}{N} - \frac{\lambda}{N} = 0.
\end{equation}

To show uniqueness, we will argue by contradiction. Suppose there is another equilibrium given by $\bar{q}_i$, $1\leq i \leq N$, and for some $i$ we have $q^*_i \neq \bar{q}_i$. The following condition must hold
\begin{eqnarray}
0 = \sum_{i = 1}^N\longdot{q}_i(t) = \lambda \cdot \frac{\sum_{i = 1}^N \exp\big(-\theta\bar{q}_i(t - \Delta)\big)}{\sum_{j = 1}^N \exp(-\theta\bar{q}_j(t - \Delta))} - \mu \sum_{i=1}^N \bar{q}_i(t), \quad \sum_{i=1}^N \bar{q}_i(t) = \frac{\lambda}{\mu} .\label{sum of queues}
\end{eqnarray}
Hence, the mean of $\bar{q}_i$ is $\frac{\lambda}{N\mu} $ and since $\bar{q}_i$ cannot all be equal to each other, there must exist some $\bar{q}_s$ that is smaller than the mean, and some $\bar{q}_g$ that is greater than the mean
\begin{equation}
\label{q bar small}
\bar{q}_s = \frac{\lambda}{\mu N} - \gamma, \qquad \bar{q}_b = \frac{\lambda}{N\mu} + \epsilon, \qquad \gamma, \epsilon > 0.
\end{equation}
This leads to a contradiction:
\begin{equation}
\longdot{q}_s(t) =  \lambda \frac{\exp\big(-\theta\bar{q}_s\big)}{\sum_{i = 1}^N\exp\big(-\theta\bar{q}_i \big)} - \mu \bar{q}_s  =0 
\end{equation}
\begin{equation} 
\implies \sum_{i = 1}^N\exp\big(-\theta\bar{q}_i \big) = \frac{\lambda}{\mu}\cdot\frac{\exp\big(-\frac{\theta\lambda}{N\mu} + \theta\gamma \big)}{(\frac{\lambda}{N\mu} - \gamma)},\label{sum of exp} 
\end{equation}
\begin{eqnarray}
\longdot{q}_g(t) = \lambda \frac{\exp\big(-\theta \bar{q}_g\big)}{\sum_{i = 1}^N\exp\big(-\theta \bar{q}_i \big)} - \mu \bar{q}_g (t) \\
= \lambda \frac{\exp\big(-\frac{\theta\lambda}{N\mu} - \theta\epsilon\big)}{\frac{\lambda}{\mu}\cdot\frac{\exp(-\frac{\theta\lambda}{N\mu} + \theta\gamma)}{(\frac{\lambda}{N\mu} - \gamma)}} - \mu \Big( \frac{\lambda}{N\mu} + \epsilon \Big) \\
= -\frac{\lambda}{N}\big(1 - e^{-\theta(\epsilon + \gamma)}\big) - \mu \big(\epsilon + \gamma e^{-\theta(\epsilon + \gamma)}\big) <0.
\end{eqnarray}
Since $\longdot{q}_g(t) \neq 0$, then $\bar{q}_i(t)$ is not an equilibrium, and the equilibrium \eqref{equilibrium} is unique.  
\end{proof}

\subsection{Approximation to Amplitude of Oscillations in Queues} \label{Sec: Approximating Amplitude} 
To see how the velocity information affects the behavior of the queues after a Hopf bifurcation occurs, we need to develop approximations for the amplitude of oscillations. In Section \eqref{Sec: N=2}, we find a first-order approximation to the amplitude but observe that it is not sufficiently accurate. Hence, we require a second-order approximation. The steps to determing the second-order approximation are outlined below. 

This process is very closely related to the staps taken in Theorem \eqref{Theorem: Hopf stability}. We begin with Equation \eqref{x}, and expand the time $\tau = \omega t$. Then expand our functions of interest in $\epsilon$ to the second order:
\begin{eqnarray}
x(\tau) = x_0(\tau) + \epsilon x_1 (\tau) + + \epsilon^2 x_2 (\tau),\quad  \Delta = \Delta_0 + \epsilon \Delta_1 + \epsilon^2 \Delta_2, \quad  \omega = \omega_0 + \epsilon \omega_1 + \epsilon^2 \omega_2, \nonumber
\end{eqnarray}
where $\Delta_0$ and $\omega_0$ are the delay and frequency at bifurcation, so $\Delta_0 = \Delta_{cr}$ and $\omega_{cr}$. By collecting all the terms with the like powers of $\epsilon$ into separate equations, we get equations from which we can solve for $x_0$ and $x_1$. From the equation for $\epsilon^0$ we find that $x_0(\tau) = A \cos(\tau)$ is a solution. Next, we use the equation for $\epsilon^1$ terms to solve for $A$, which has the expression given by Equation \eqref{R1}. We can now find $x_1$ that has a solution of the form $x_1(\tau) = a_1 \sin(\tau) + a_2 \cos(\tau) + a_3 \sin(3 \tau) + a_4 \cos(3 \tau)$. The coefficients $a_3$ and $a_4$ are determined from equation for $\epsilon^1$ terms. We impose the initial condition $x'(0) = 0$ to ensure that the maximum amplitude is at $0$, which implies $a_1 = - 3 a_3$. Lastly, we determine $a_2$ by eliminating the secular terms from the equation for $\epsilon^2$ terms. Therefore, the second-order approximation of the amplitude of oscillations can be deduced from 
\begin{eqnarray}
x(\tau) \approx x_0(\tau) + \epsilon x_1(\tau) \\
= A \cos(\tau) + \epsilon\big( a_1 \sin(\tau) + a_2 \cos(\tau) + a_3 \sin(3 \tau) + a_4 \cos(3 \tau) \big),
\end{eqnarray}
where the coefficients given below:
\begin{eqnarray} 
A = \sqrt{ \frac{4 \Delta_1 (\lambda^2\theta^2 - 4 \mu^2)(4 - \delta^2 \lambda^2\theta^2)^2}{\theta^2(1 - \delta^2\mu^2)(16\mu + \lambda^2\theta^2(4\Delta_0 -4\delta + \delta^3\lambda^2\theta^2 - 4\delta^2\mu -4\delta^2 \Delta_0\mu^2))}} \nonumber
\end{eqnarray}
\begin{eqnarray}
\omega_1 = \frac{4 \Delta_1 \theta ^2 \lambda ^2 \left(\delta ^2 \mu ^2-1\right) \sqrt{\theta ^2 \lambda ^2-4 \mu ^2}}{\sqrt{4-\delta ^2 \theta ^2 \lambda ^2} \left(\theta ^2 \lambda ^2 \left(\delta  \left(\delta ^2 \theta ^2 \lambda ^2-4 \delta  \mu  (\text{$\Delta $0} \mu +1)-4\right)+4 \text{$\Delta $0}\right)+16 \mu \right)} \nonumber
\end{eqnarray}
\begin{eqnarray}
a_1 = - 3 a_3 = -\Big(2 A^3 \theta ^2 \omega_0^3 \left(\theta ^2 \lambda ^2 \mu  \left(\delta ^2 \omega_0^2+1\right)^3-4 \delta ^3 \left(\mu ^2+\omega_0^2\right)^3\right)\Big)\nonumber  \\
/\Big(\theta ^4 \lambda ^4 \left(\delta ^2 \omega_0^2+1\right)^3 \left(\mu ^2+9 \omega_0^2\right)+16 \left(9 \delta ^2 \omega_0^2+1\right) \left(\mu ^2+\omega_0^2\right)^3 \nonumber \\
+8 \theta ^2 \lambda ^2 \big(-9 \delta ^4 \omega_0^8-6 \mu ^2 \omega_0^2 \left(\delta ^2 \mu ^2+1\right)+2 \delta ^2 \omega_0^6 (\delta  \mu  (9 \delta  \mu -32)+9) \nonumber \\
+3 \omega_0^4 \left(\delta ^4 \mu ^4-12 \delta ^2 \mu ^2+1\right)-\mu ^4\big)\Big)\nonumber 
\end{eqnarray}
\begin{eqnarray}
a_4 = -\frac{1}{12}\Big(A^3 \theta ^2 \left(\theta ^2 \lambda ^2 \left(\delta ^2 \omega_0^2+1\right)^3 \left(\mu ^4+6 \mu ^2 \omega_0^2-3 \omega_0^4\right)+4 \left(3 \delta ^4 \omega_0^4-6 \delta ^2 \omega_0^2-1\right) \left(\mu ^2+\omega_0^2\right)^3\right)\Big) \nonumber \\
/\Big(\theta ^4 \lambda ^4 \left(\delta ^2 \omega_0^2+1\right)^3 \left(\mu ^2+9 \omega_0^2\right)+16 \left(9 \delta ^2 \omega_0^2+1\right) \left(\mu ^2+\omega_0^2\right)^3 \nonumber \\
+8 \theta ^2 \lambda ^2 \big(-9 \delta ^4 \omega_0^8-6 \mu ^2 \omega_0^2 \left(\delta ^2 \mu ^2+1\right)+2 \delta ^2 \omega_0^6 (\delta  \mu  (9 \delta  \mu -32)+9) \nonumber \\
+3 \omega_0^4 \left(\delta ^4 \mu ^4-12 \delta ^2 \mu ^2+1\right)-\mu ^4\big)\Big)\nonumber 
\end{eqnarray}
\begin{eqnarray}
a_2 = \frac{1}{12}\Big(A^5 \theta ^4 \left(\delta ^2 \omega_0^2+1\right)^2 \left(\delta ^2 \mu  \omega_0^2+\mu ^2 \left(\delta  \left(\delta  \Delta_0 \omega_0^2-1\right)+\Delta_0\right)+\omega_0^2 \left(\delta  \left(\delta  \Delta_0 \omega_0^2-1\right)+\Delta_0\right)+\mu \right) \nonumber \\%
-12 A^3 \theta ^2 \omega_0 \big(\omega_1 \left(\delta  \left(3 \delta ^3 \Delta_0 \omega_0^4+\delta  \omega_0^2 (\delta  (\delta  \mu  (2 \Delta_0 \mu +3)-3)+4 \Delta_0)+\delta  \mu  (-2 \delta  \mu +2 \Delta_0 \mu +3)-1\right)+\Delta_0\right) \nonumber \\%
-\Delta_1 \omega_0 \left(\delta ^2 \mu ^2-1\right) \left(\delta ^2 \omega_0^2+1\right)\big) +12 A^2 \theta ^2 \big(a_1 \omega_0 \left(\delta ^2 \mu ^2-1\right) \left(\delta ^2 \omega_0^2+1\right) \nonumber \\
+a_3 \omega_0 \left(\delta ^2 \left(\omega_0^2 (\delta  \mu  (-3 \delta  \mu +8 \Delta_0 \mu +8)-5)+8 \delta  \Delta_0 \omega_0^4+\mu ^2\right)-1\right) \nonumber \\
+a_4 (3 \delta ^4 \Delta_0 \omega_0^6+3 \delta ^2 \omega_0^4 (\delta  (\delta  \mu  (\Delta_0 \mu +1)-1)-2 \Delta_0)+\omega_0^2 (\delta  (\delta  \mu  (5 \delta  \mu -6 \Delta_0 \mu -6)+1)-\Delta_0) \nonumber \\
+\mu  (\delta  \mu -\Delta_0 \mu -1))\big) -96 A \big(2 \Delta_1 \omega_0 \omega_1 (\mu  \left(\mu  \left(2 \delta ^2-2 \delta  \Delta_0+\Delta_0^2\right)-\delta +\Delta_0\right)+\delta ^2 \Delta_0^2 \omega_0^4 \nonumber \\
+\Delta_0 \omega_0^2 (\delta  (\delta  \mu  (\Delta_0 \mu +1)-2)+\Delta_0)-1) +\Delta_0 \omega_1^2 (\delta ^2 \Delta_0^2 \omega_0^4+\Delta_0 \omega_0^2 (\delta  (\delta  \mu  (\Delta_0 \mu +1)-3)+\Delta_0) \nonumber \\
+\mu  (2 \delta -\Delta_0) (\delta  \mu -\Delta_0 \mu -1))+\Delta_1^2 \omega_0^2 (\delta ^2 \Delta_0 \omega_0^4+\omega_0^2 (\delta  (\delta  \mu  (\Delta_0 \mu +1)-1)+\Delta_0) \nonumber \\
+\mu  (-\delta  \mu +\Delta_0 \mu +1))\big)-192 a_1 \big(\omega_1 (\delta ^2 \Delta_0^2 \omega_0^4 
+\Delta_0 \omega_0^2 (\delta  (\delta  \mu  (\Delta_0 \mu +2)-2)+\Delta_0)\nonumber \\
+(-\delta  \mu +\Delta_0 \mu +1)^2)+\Delta_1 \omega_0 \left(\delta ^2 \Delta_0 \omega_0^4+\omega_0^2 (\delta  (\delta  \mu  (\Delta_0 \mu +1)-1)+\Delta_0)+\mu  (-\delta  \mu +\Delta_0 \mu +1)\right)\big)\Big) \nonumber \\
/\Big(3 A^2 \theta ^2 \left(\delta ^2 \omega_0^2+1\right) \left(\delta ^2 \Delta_0 \omega_0^4+\omega_0^2 (\delta  (\delta  \mu  (\Delta_0 \mu +1)-1)+\Delta_0)+\mu  (-\delta  \mu +\Delta_0 \mu +1)\right) \nonumber \\
+16 \Delta_1 \omega_0^2 \left(\delta ^2 \mu ^2-1\right)\Big). \nonumber 
\end{eqnarray}

To reproduce our numerical results from Section \eqref{Sec: second order} - \eqref{Sec: minimize amplitude}, set $\epsilon = 1$ and $\Delta_1 = \frac{1}{\epsilon}(\Delta - \Delta_0)$, with $\Delta_0$ given by Equation \eqref{delta cr}. Note that in the equations above there is no presence of $\Delta_2$, because we have have set $\Delta_2 = 0$. There is no equation that determines $\Delta_2$ and $\Delta_1$ uniquely, and the only restriction is that $\Delta = \Delta_0 + \epsilon \Delta_1 + \epsilon^2 \Delta_2$. Prior to choosing $\Delta_2 $ to be 0, we experimented numerically with different combinations of $\Delta_1$ and $\Delta_2$, and determined that the pair $\Delta_1 = \frac{1}{\epsilon}(\Delta - \Delta_0)$ and $\Delta_2 = 0$ results in nearly the most accurate approximation.

\end{document}